\documentclass[10pt]{extarticle}
\usepackage{mathrsfs}
\renewcommand{\mathcal}{\mathscr}
\usepackage{amsmath,amsthm,amsopn,amstext,amscd,amsfonts,amssymb}
\usepackage{mathtools}
\usepackage{tikz-cd}
\usepackage{graphicx, color, enumerate}
\usepackage[active]{srcltx}
\usepackage{pgf}
\usepackage{verbatim}
\usepackage{pgfkeys}
\usepackage{fp}
\usepackage[shortlabels]{enumitem}
\numberwithin{equation}{section}

\numberwithin{bbb}{section}
\usepackage[T1]{fontenc}
\usepackage{babel}
\usepackage{calc}
\usepackage{everypage}
\newlength\Hoffset \Hoffset -4cm
\newlength\Voffset \Voffset -1cm
\AddEverypageHook{%
    \smash{\hspace{\oddsidemargin}\hspace{\textwidth}\hspace{\Hoffset}%
    \raisebox{\Voffset-\headsep-\headheight-\topmargin}%
    {%
    }}}
\usepackage[vmargin=3cm,hmargin=2cm]{geometry}

\usepackage{fontspec}

\newcommand{\beq}{\begin{eqnarray}}
\newcommand{\eeq}{\end{eqnarray}}
\newcommand{\bq}{\begin{equation}}
\newcommand{\eq}{\end{equation}}
\newcommand{\beqn}{\begin{eqnarray*}}
\newcommand{\eeqn}{\end{eqnarray*}}

\newcommand{\vertiii}[1]{{\vert\kern-0.25ex\vert\kern-0.25ex\vert #1
    \vert\kern-0.25ex\vert\kern-0.25ex\vert}}

\newcommand{\ignore}[1]{}

\newtheorem{proposition}{Proposition}[section]
\newtheorem{theorem}{Theorem}[section]
\newtheorem{remark}{Remark}[section]
\newtheorem{lemma}{Lemma}[section]
\newtheorem{corollary}{Corollary}[section]
\newtheorem{example}[theorem]{Example}

\newtheorem{result}{Result}[section]

\numberwithin{equation}{section}

\begin{document}
\begin{center}
{\LARGE\textbf{On the Dominions of Certain Semigroups of Transformations}}\\
\vspace{0.5cm}
\large {Halima H. Assiri and Jehan A. Al-bar}
\end{center}

\vspace{0.5cm}
\hrule
\vspace{0.6cm}
\textbf{Abstract:} In the full transformation semigroup $T_n$ on a finite chain $X_n$, let $D_n=\{\alpha \in T_n:(\forall x \in X_n) \ x\alpha \le x\}$ be the subsemigroup of all order-decreasing maps of $T_n$, and let $O_n=\{\alpha \in T_n:(\forall x ,y\in X_n) \ x \le y \Rightarrow x\alpha \leq y\alpha\}$ be the subsemigroup of all order-preserving maps of $T_n$. The Catalan monoid $C_n$ is a semigroup of all order-decreasing and order-preserving full transformations of $X_n$. In this paper, it is shown that $O_n$ is closed in $T_n$. Also, the dominion of $D_n$ and the dominion of $C_n$ in $T_n$, denoted by $Dom_{T_n}(D_n)$ and $Dom_{T_n}(C_n)$, are characterized, and it is shown that they are regular idempotent-generated subsemigroups of $T_n$. Moreover, a formula for the number of their elements and their idempotents is given.

\vspace{0.3cm}
\hrule
\vspace{0.6cm}
\textbf{Keywords:} full transformation semigroups, dominion, singular maps, oversemigroup, idempotent-generated, order-decreasing maps, order-preserving maps, Catalan monoid.

\vspace{0.4cm}
\hrule
\vspace{0.5cm}

\section{Introduction}
\ \ \ \ \ In \cite{7}, Nasir and Umar gave a method to describe the dominion of subsemigroups of partial transformation semigroup. In this work, we will apply the same method of Proposition 1 in \cite{7} with some modifications to describe the dominion of $D_n, O_n$, and $C_n$ in $T_n$, which are subsemigroups of full transformation semigroup, but first we will go through some essential definitions that are needed.

A semigroup $S$ is called an oversemigroup of a semigroup $U$ if the later is a subsemigroup of the former. Given an oversemigroup $S$ of a semigroup $U$, an element $d\in S$ is said to be dominated by $U$ if for every semigroup $T$ and for all homomorphisms $f,g : S \rightarrow T,$ we have
 $$f|_U=g|_U \, \Longrightarrow (d)f=(d)g.$$   
The set of all elements of $S$ dominated by $U$ is called the dominion of $U$ in $S$, denoted by $Dom_{S}(U)$, and it is a subsemigroup of $S$ containing $U$. We call $U$ closed in $S$ if $Dom_{S}(U)=U$. A semigroup $U$ is called absolutely closed if $U$ is closed in every oversemigroup $S$ of $U$, and $U$ is called saturated if $Dom_{S}(U)\neq S.$ \cite{RI}.

Let $X_n=\{1,2,...,n\}$ be a finite chain. A (partial) map $\alpha:dom(\alpha)\subseteq X_n\rightarrow Im(\alpha)\subseteq X_n$ is said to be full map if $dom(\alpha)=X_n$. The full transformation semigroup $T_n$ is the (regular) semigroup of all full maps of $X_n$, and it contains $n^n$ elements. Combinatorial properties of $T_n$ have been studied over a long period and many interesting results have emerged (see \cite{6,14,12,16,1}). To identify the structure and algebraic properties of a semigroup and understand the relationship between this semigroup and its extensions into other algebraic structures, Howie and Isbell in \cite{13} studied the dominions of semigroups, such as inverse semigroups and left-simple semigroups. Scheiblich and Kayran in \cite{30} showed that $T_n$ is absolutely closed. For any transformation $\alpha$ in $T_n$ we denote
$$\alpha=
\begin{pmatrix}  A_1&A_2&...&A_r\\a_1&a_2&...&a_r
\end{pmatrix},$$
where $A_1,A_2,...,A_r$ called the blocks of $\alpha$, and $A_i=a_i\alpha^{-1}$$(i=1,2,...,r)$. The map $\alpha$ is idempotent if every block of $\alpha$ is stationary, that is, $a_i\in A_i$ for all $i$ \cite{16}. The number of idempotents of $T_n$ have been investigated by Harris and  L.Schoenfeld in \cite{10,11}.

Let $S$ be a semigroup, for the definition of the Green's relations: $\mathcal{L},\mathcal{R}, \mathcal{H}, \mathcal{D}$ and $\mathcal{J}$ on $S$, see \cite{14} or \cite{76}. A relation $\mathcal{L^*}$ defined as $(\forall a,b \in S)$, $a\mathcal{L^*}b$ if and only if $a,b$ are related by $\mathcal{L}$ in some oversemigroup of $S$. The relation $\mathcal{R^*}$ is defined dually, the two-sided version of $\mathcal{L^*}$ and $\mathcal{R^*}$ defined by $\mathcal{J^*}$. The join of the relations $\mathcal{L^*}$ and $\mathcal{R^*}$ is denoted by $\mathcal{D^*}$ and their intersection by $\mathcal{H^*}$. We generally have $\mathcal{L}\subseteq \mathcal{L^*}$, $\mathcal{R}\subseteq \mathcal{R^*}$, $\mathcal{H}\subseteq \mathcal{H^*}$, $\mathcal{D}\subseteq \mathcal{D^*}$ and $\mathcal{J}\subseteq \mathcal{J^*}$. A semigroup is said to be abundant if each $\mathcal{L^*}$-class and each $\mathcal{R^*}$-class contains an idempotents. Of course, regular semigroups are abundant and in this case $\mathcal{K^*}=\mathcal{K}$ for $\mathcal{K}$ any of $\mathcal{L}, \mathcal{R}, \mathcal{H}, \mathcal{D}$ or $\mathcal{J}$ \cite{40}. The Green's relations in the full transformation semigroup $T_n$ were characterized by Howie in \cite{6} and Clifford in \cite{1}.

The symmetric group 
$S_n=\{\alpha \in T_n : |Im(\alpha)|=n\}$ is a subsemigroup of $T_n$ consisting of all permutations on $X_n$. The singular subsemigroup $$T_n\setminus S_n=\{\alpha \in T_n: |Im(\alpha)|\leq n-1\}$$ is the set of all non-bijective transformations on $X_n$. The study of $T_n\setminus S_n$ was initiated in 1966 by Howie, who showed that this semigroup is a regular idempotent-generated subsemigroup of $T_n$ \cite{2}.
Let $$J_r=\{\alpha \in T_n : |Im(\alpha)|=r\}$$ are the $\mathcal{J}$-classes of $T_n$, then we can regard $T_n$ as partitioned into 'layers', $$J_1,J_2,...,J_{n-1},J_n.$$ The set $J_n$ is the symmetric group $S_n$, and $T_n\setminus S_n=J_1\cup J_2\cup...\cup J_{n-1}$ \cite{6}.

The partial transformation semigroup $P_n$ is the semigroup of all partial maps of $X_n$. The symmetric inverse semigroup $I_n$ is the semigroup of partial one-one transformations of $X_n$. A semigroup $S$ is called ample if it can be embedded in the symmetric inverse semigroup $I_n$ such that the image of $S$ is closed under the unary operation $\alpha \rightarrow \alpha \alpha^{-1}$ and $\alpha \rightarrow \alpha^{-1}\alpha $, where $\alpha^{-1}$ is the inverse of $\alpha$ in $I_n$. Recently, in 2023 Nasir and Umar showed that the dominion of ample subsemigroup $U$ of $I_n$ is the inverse subsemigroup of $I_n$ generated by $U$. [\cite{7}, Proposition 1].

A map $\alpha \in T_n$ is said to be order-decreasing if $(\forall x \in X_n) \  x\alpha \leq x$. The set
$$D_n=\{\alpha \in T_n: (\forall x\in X_n) \ x\alpha\leq x\},$$ 
is the subsemigroups of $T_n$ consisting of all order-decreasing maps of $X_n$. Umar in \cite{15} studied the structure of $D_n$ and characterized the Green's relations in it. Also, showed that $D_n$ is a non-regular abundant semigroup generated by its idempotents, and gave a formula to find the number of elements and idempotents in $D_n$.

A map $\alpha \in T_n$ is said to be  order-preserving if $(\forall \ x,y \in X_n)$ $x \leq y$ implies $x\alpha \leq y\alpha$. The set
$$O_n=\{\alpha \in T_n: (\forall \ x,y\in X_n) \ x \leq y \Rightarrow x\alpha\leq y\alpha\},$$  
is the subsemigroups of $T_n$ consisting of all  order-preserving maps of $X_n$. The semigroup $O_n$ was first studied by Aizenstat in 1962 \cite{AZ}, who gave a presentation for $O_n$. Howie in \cite{pod} studied the structure of $O_n$ and showed that $O_n$ is a regular idempotent-generated subsemigroup of $T_n$. Also, gave a formula to find the number of elements and idempotents in $O_n$. In \cite{GM}, Gomes and Howie established some properties of $O_n$.

The Catalan monoid $C_n$ is the subsemigroup of $T_n$ consisting of all maps that are both order-decreasing and order-preserving  $C_n=D_n \cap O_n$. Higgins in \cite{HIG} studied the structure of $C_n$ and gave a formula to find the number of elements and idempotents in $C_n$. In 1994 \cite{HI}, Higgins showed that $C_n$ is an idempotent-generated subsemigroup of $T_n$. Laradji and Umar investigated Further combinatorial properties for $C_n$ in \cite{8}. 

\section{Dominion of $D_n$ in $T_n$}
\vspace{0.3cm}
Let $E(D_n)$ be the set of idempotents in $D_n$. In order to study the dominion of $D_n$ in $T_n$ we need to characterize $\alpha^{\prime}$ in $T_n \setminus D_n$ so that $\alpha^{\prime}\alpha\alpha^{\prime}$ dominated by $D_n$.

\vspace{0.1cm}
\begin{proposition}\label{vx}
Let $\alpha$ $\in D_n$ and $\alpha^{\prime}$ $\in T_n \setminus D_n$ defined by
 \begin{equation*}
 \tag{1}
    x\alpha^{\prime}= \begin{cases}
    \text{min} (x\alpha^{-1}) & \text{if } x \in Im(\alpha)\\
    y \in[(z\alpha)\alpha^{-1}] \ \ \text{such that} \ \ z\alpha <x  & \text{if } x \notin Im(\alpha)
    \end{cases}
\end{equation*}\\
Then \\ 
1- $\alpha\alpha^{\prime}\alpha=\alpha$\\
2- $\alpha \alpha^{\prime} \in E(D_n)$\\
3- $\alpha^{\prime}\alpha \in E(D_n)$\\
4- $\alpha^{\prime} \alpha \alpha^{\prime} \in Dom_{T_{n}}(D_n)$

\end{proposition}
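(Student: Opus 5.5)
The plan is to verify items 1--3 by tracking where $\alpha^{\prime}$ sends each point, and then to obtain item 4 from a short zigzag-type computation that uses only items 1--3. First we check that $\alpha^{\prime}$ is well defined. Since $\alpha\in D_n$ we have $1\alpha\le 1$, so $1\alpha=1$ and $1\in Im(\alpha)$. Hence for every $x\notin Im(\alpha)$ (necessarily $x\ge 2$) there is an image point $z\alpha<x$, for instance $z\alpha=1$, and the second clause of (1) always has a candidate $y$. In both clauses of (1) the image $x\alpha^{\prime}$ lies in the preimage of some image point. Explicitly, for $x\in Im(\alpha)$ we have $(x\alpha^{\prime})\alpha=x$, while for $x\notin Im(\alpha)$ we have $(x\alpha^{\prime})\alpha=z\alpha<x$.

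For item 1, take $t\in X_n$ and put $a=t\alpha\in Im(\alpha)$. Then $a\alpha^{\prime}=\min(a\alpha^{-1})$ is mapped by $\alpha$ back to $a$, so $t\alpha\alpha^{\prime}\alpha=t\alpha$. For item 2, note that $t\alpha\alpha^{\prime}=\min((t\alpha)\alpha^{-1})\le t$, because $t$ itself lies in $(t\alpha)\alpha^{-1}$. Hence $\alpha\alpha^{\prime}\in D_n$. Idempotency follows from item 1:
\[
(\alpha\alpha^{\prime})(\alpha\alpha^{\prime})=(\alpha\alpha^{\prime}\alpha)\alpha^{\prime}=\alpha\alpha^{\prime}.
\]
For item 3, the observation above gives $x\alpha^{\prime}\alpha=x$ if $x\in Im(\alpha)$ and $x\alpha^{\prime}\alpha=z\alpha<x$ otherwise. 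So $\alpha^{\prime}\alpha\in D_n$, and $(\alpha^{\prime}\alpha)^2=\alpha^{\prime}(\alpha\alpha^{\prime}\alpha)=\alpha^{\prime}\alpha$, again by item 1.

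For item 4, let $T$ be any semigroup and let $f,g:S\to T$ be homomorphisms agreeing on $D_n$. Since $\alpha$, $\alpha\alpha^{\prime}$ and $\alpha^{\prime}\alpha$ all lie in $D_n$, the maps $f$ and $g$ agree on each of them. Using item 1, we can write $\alpha^{\prime}\alpha\alpha^{\prime}=(\alpha^{\prime}\alpha)\,\alpha^{\prime}\,(\alpha\alpha^{\prime})$, and then compute
\[
(\alpha^{\prime}\alpha\alpha^{\prime})f=(\alpha^{\prime}\alpha)g\,(\alpha^{\prime})f\,(\alpha\alpha^{\prime})f=(\alpha^{\prime})g\,(\alpha)f\,(\alpha^{\prime})f\,(\alpha)f\,(\alpha^{\prime})f .
\]
The middle factor $(\alpha)f(\alpha^{\prime})f(\alpha)f$ equals $(\alpha\alpha^{\prime}\alpha)f=(\alpha)f$. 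Replacing it collapses the right-hand side to $(\alpha^{\prime})g\,(\alpha\alpha^{\prime})f$. Now $\alpha\alpha^{\prime}\in D_n$, so this equals $(\alpha^{\prime})g\,(\alpha\alpha^{\prime})g=(\alpha^{\prime}\alpha\alpha^{\prime})g$.

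The only delicate point is the bookkeeping in this last chain. At each step we must switch between $f$ and $g$ only on factors that genuinely lie in $D_n$, namely $\alpha$, $\alpha\alpha^{\prime}$ and $\alpha^{\prime}\alpha$, and never on $\alpha^{\prime}$ itself, since $\alpha^{\prime}\notin D_n$. The well-definedness of $\alpha^{\prime}$ (that $1\in Im(\alpha)$) is the small fact that makes items 1--3 routine.
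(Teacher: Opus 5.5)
Your proof is correct and follows the paper's argument. Items 1--3 use the same pointwise checks; you also justify that $\alpha^{\prime}$ is well defined via $1\alpha=1$ and derive the idempotency of $\alpha\alpha^{\prime}$ and $\alpha^{\prime}\alpha$ from item 1, where the paper only asserts it. Item 4 uses the same switching of $f$ and $g$ on $\alpha^{\prime}\alpha$, $\alpha$ and $\alpha\alpha^{\prime}$. Your rewriting $\alpha^{\prime}\alpha\alpha^{\prime}=(\alpha^{\prime}\alpha)\alpha^{\prime}(\alpha\alpha^{\prime})$ is a harmless detour: the paper's direct chain $(\alpha^{\prime}\alpha)f(\alpha^{\prime})f=(\alpha^{\prime})g(\alpha)f(\alpha^{\prime})f=(\alpha^{\prime})g(\alpha\alpha^{\prime})g$ needs only that these three elements lie in $D_n$, not item 1.
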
  

\begin{proof}
Let $x \in X_n$. First, we will show that $x\alpha \alpha^{\prime}\alpha=x\alpha$. Since $x\alpha \in Im(\alpha)$ and $min((x\alpha)\alpha^{-1}) \in (x\alpha)\alpha^{-1}$, then 
$$x\alpha \alpha^{\prime}\alpha=(min((x\alpha)\alpha^{-1})\alpha=x\alpha,$$ so $\alpha \alpha^{\prime}\alpha=\alpha$.

For the second part, since $x \in (x\alpha)\alpha^{-1}$, then 
$$x\alpha \alpha^{\prime} =min((x\alpha)\alpha^{-1}) \leq x,$$
so $x\alpha \alpha^{\prime} \leq x$. Also
$(\alpha\alpha^{\prime})^2=\alpha\alpha^{\prime}$, thus $\alpha \alpha^{\prime} \in E(D_n)$.

Next, for the third part,
if $x \in Im(\alpha)$, since $min(x\alpha^{-1}) \in x\alpha^{-1}$, then $$x \alpha^{\prime}\alpha=(min(x\alpha^{-1}))\alpha=x.$$
If $x \notin Im(\alpha)$, then 
$$x \alpha^{\prime}\alpha=y\alpha=z\alpha<x,$$ so $x\alpha^{\prime}\alpha \leq x$. Also $(\alpha^{\prime}\alpha)^2=\alpha^{\prime}\alpha$, thus $\alpha^{\prime}\alpha \in E(D_n)$.

And now for the last part,let $f,g: T_n \rightarrow T$ be semigroup homomorphisms with $f|_{D_{n}}=g|_{D_{n}}$. \\ Then
\ \ \ \ \ 
$(\alpha^{\prime}\alpha\alpha^{\prime})f
\vspace{0.2cm}
= (\alpha^{\prime}\alpha)f(\alpha^{\prime})f
\\
\vspace{0.2cm}
\hspace{80pt}= (\alpha^{\prime}\alpha)g(\alpha^{\prime})f \hspace{42pt}[\alpha^{\prime}\alpha \in D_n]\\
\vspace{0.2cm}
\hspace{80pt}= (\alpha^{\prime})g(\alpha)g(\alpha^{\prime})f\\
\vspace{0.2cm}
\hspace{80pt}= (\alpha^{\prime})g(\alpha)f(\alpha^{\prime})f \hspace{30pt}[\alpha \in D_n]
\\
\vspace{0.2cm}
\hspace{80pt}= (\alpha^{\prime})g(\alpha\alpha^{\prime})f\\
\vspace{0.2cm}
\hspace{80pt}= (\alpha^{\prime})g(\alpha\alpha^{\prime})g \hspace{42pt}[\alpha \alpha^{\prime} \in D_n]
\\
\vspace{0.2cm}
\hspace{80pt}= (\alpha^{\prime}\alpha\alpha^{\prime})g$.\\[10pt]
Hence, $\alpha^{\prime} \alpha \alpha^{\prime} \in Dom_{T_n}(D_n)$.\\
\end{proof}

Next we characterize $\alpha \in D_n$ so that $\alpha^{\prime}\alpha\alpha^{\prime} \in Dom_{T_n}(D_n) \setminus D_n$.

\begin{lemma}
Let $\alpha \in D_n$ and $\alpha^{\prime} \in T_n \setminus D_n$ defined by (1).\\
1- If $\alpha \in E(D_n)$, then $\alpha^{\prime}\alpha\alpha^{\prime} \in D_n$.\\
2- If $\alpha \notin E(D_n)$, then $\alpha^{\prime}\alpha\alpha^{\prime} \in Dom_{T_n}(D_n) \setminus D_n$.
\end{lemma}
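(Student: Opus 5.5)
The plan is to compute $\alpha^{\prime}\alpha\alpha^{\prime}$ pointwise from definition (1), and then compare its value at each $x$ with $x$. Membership in $Dom_{T_n}(D_n)$ is already given by part 4 of Proposition \ref{vx}. So everything reduces to deciding when $\alpha^{\prime}\alpha\alpha^{\prime}$ is order-decreasing.

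First I check that $\alpha^{\prime}$ is well defined. Since $\alpha\in D_n$ we have $1\alpha=1$, so $1\in Im(\alpha)$. Hence any $x\notin Im(\alpha)$ satisfies $x>1=1\alpha$, and a $z$ with $z\alpha<x$ exists (for instance $z=1$).

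Now I compute $x\alpha^{\prime}\alpha\alpha^{\prime}$ in two cases.
\begin{itemize}
\item If $x\in Im(\alpha)$, then $x\alpha^{\prime}\alpha=x$, as in the proof of part 3 of Proposition \ref{vx}. Hence $x\alpha^{\prime}\alpha\alpha^{\prime}=x\alpha^{\prime}=\min(x\alpha^{-1})$.
\item If $x\notin Im(\alpha)$, then $x\alpha^{\prime}\alpha=y\alpha=z\alpha$, which lies in $Im(\alpha)$ and satisfies $z\alpha<x$. Hence $x\alpha^{\prime}\alpha\alpha^{\prime}=\min((z\alpha)\alpha^{-1})$.
\end{itemize}
A basic observation used throughout: since $\alpha$ is order-decreasing, every $w\in x\alpha^{-1}$ satisfies $w\ge w\alpha=x$. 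Thus $\min(x\alpha^{-1})\ge x$, with equality if and only if $x\in x\alpha^{-1}$, i.e. $x\alpha=x$.

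For part 1, assume $\alpha\in E(D_n)$. Then every point of $Im(\alpha)$ is fixed, as recalled in the introduction (each block is stationary).
\begin{itemize}
\item For $x\in Im(\alpha)$, the observation gives $\min(x\alpha^{-1})=x$, so $x\alpha^{\prime}\alpha\alpha^{\prime}=x$.
\item For $x\notin Im(\alpha)$, the point $z\alpha$ is in $Im(\alpha)$ and hence fixed. So $\min((z\alpha)\alpha^{-1})=z\alpha<x$.
\end{itemize}
Therefore $x\alpha^{\prime}\alpha\alpha^{\prime}\le x$ for all $x$, and $\alpha^{\prime}\alpha\alpha^{\prime}\in D_n$.

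For part 2, assume $\alpha\notin E(D_n)$. Then some $a\in Im(\alpha)$ is not fixed, i.e. $a\alpha\neq a$, so $a\notin a\alpha^{-1}$. By the observation, every element of $a\alpha^{-1}$ is at least $a$ and none equals $a$. Hence $a\alpha^{\prime}\alpha\alpha^{\prime}=\min(a\alpha^{-1})>a$, so $\alpha^{\prime}\alpha\alpha^{\prime}\notin D_n$. Combined with part 4 of Proposition \ref{vx}, this gives $\alpha^{\prime}\alpha\alpha^{\prime}\in Dom_{T_n}(D_n)\setminus D_n$.

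The only delicate point is the case $x\notin Im(\alpha)$. There the arbitrary choice of $y$ in (1) must be shown not to matter. It does not: $y\alpha=z\alpha$ always, so after applying $\alpha$ the choice of $y$ disappears. I expect no serious obstacle beyond this.
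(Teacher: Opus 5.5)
Your proposal is correct and follows the same pointwise computation as the paper's proof. In both, you evaluate $x\alpha^{\prime}\alpha\alpha^{\prime}$ separately for $x\in Im(\alpha)$ and $x\notin Im(\alpha)$, using $\min(x\alpha^{-1})\ge x$ with equality exactly when $x\alpha=x$; you also spell out the well-definedness of (1) via $1\alpha=1$ and the appeal to part 4 of Proposition \ref{vx} for dominion membership, both of which the paper leaves implicit.
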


\begin{proof}
   1- Let $\alpha \in E(D_n)$ and $x \in X_n$. We will show that $x\alpha^{\prime}\alpha\alpha^{\prime} \leq x$. If $x \in Im(\alpha)$, then 
   $$x \alpha^{\prime}\alpha\alpha^{\prime} =(min(x\alpha^{-1}))\alpha\alpha^{\prime}=x\alpha^{\prime}=min(x\alpha^{-1})=x.$$
   If $x \notin Im(\alpha)$, then 
   $$x \alpha^{\prime}\alpha\alpha^{\prime} =y\alpha\alpha^{\prime}=z\alpha\alpha^{\prime}=(min((z\alpha)\alpha^{-1}))=z\alpha < x,$$
   thus    $\alpha^{\prime}\alpha\alpha^{\prime}\in D_n$.

\vspace{0.5cm}
2- Let $\alpha \notin E(D_n)$. Then $\exists \ x \in Im(\alpha)$ such that $x \notin x\alpha^{-1}$,
   so
   $$x \alpha^{\prime}\alpha\alpha^{\prime}=(min(x\alpha^{-1}))\alpha\alpha^{\prime}=x\alpha^{\prime}=min (x\alpha^{-1}).$$ Since $\forall \ y \in x\alpha^{-1}, y >x$, then $x\alpha^{\prime}\alpha\alpha^{\prime} > x$,
  and so $\alpha^{\prime}\alpha\alpha^{\prime} \in Dom_{T_n}(D_n) \setminus D_n$.\\
  \end{proof}

  \vspace{0.2cm}
  \begin{remark}
It is not guaranteed that $\alpha^{\prime}\alpha\alpha^{\prime}=\alpha^{\prime}$. For example,

\vspace{0.7cm}
let \
$\alpha=
\begin{pmatrix}  1&2&3&4&5\\1&1&3&2&2
\end{pmatrix}
$ $\in D_5$ and
$\alpha^{\prime}=
\begin{pmatrix}  1&2&3&4&5\\1&4&3&4&5
\end{pmatrix}$ $\in T_5$,
then  
$\alpha^{\prime}\alpha\alpha^{\prime}=
\begin{pmatrix}  1&2&3&4&5\\1&4&3&4&4
\end{pmatrix}
$$\neq \alpha^{\prime}$.
\end{remark}

\vspace{0.8cm}
In the following result we will see that for which $\alpha^{\prime}$ this equality will be true.

\vspace{0.1cm}
\begin{lemma}
Let $\alpha$ $\in D_n$ and $\alpha^{\prime}$ $\in T_n \setminus D_n$ defined by
 \begin{equation}
 \tag{2}
    x\alpha^{\prime}=
    \begin{cases}
    \text{min} (x\alpha^{-1}) & \text{if } x \in Im(\alpha)\\
    min((z\alpha)\alpha^{-1}) \ \ \text{such that} \ \ z\alpha <x  & \text{if } x \notin Im(\alpha)
    \end{cases}
\end{equation}

Then $\alpha^{\prime}\alpha\alpha^{\prime}=\alpha^{\prime}$.
 \end{lemma}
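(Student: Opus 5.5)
Fix $x \in X_n$ and verify $x\alpha^{\prime}\alpha\alpha^{\prime} = x\alpha^{\prime}$ pointwise. The key fact is that in both branches of (2), $x\alpha^{\prime}$ is the minimum of a fibre of $\alpha$: either of $x\alpha^{-1}$, or of $(z\alpha)\alpha^{-1}$. So $x\alpha^{\prime}$ always lies in $w\alpha^{-1}$ for some $w \in Im(\alpha)$, and is the least element of that fibre. Concretely, set $w=x$ in the first case and $w = z\alpha$ in the second. Then
$$x\alpha^{\prime} = \min(w\alpha^{-1}), \qquad w \in Im(\alpha).$$

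Next, apply $\alpha$. Since $\min(w\alpha^{-1}) \in w\alpha^{-1}$, we get $x\alpha^{\prime}\alpha = w$. This is the same computation as in parts 1 and 3 of Proposition \ref{vx}. In the second case it gives $x\alpha^{\prime}\alpha = z\alpha$, exactly as in the proof of Proposition \ref{vx}(3).

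Then apply $\alpha^{\prime}$ once more. Because $w \in Im(\alpha)$, the first branch of (2) applies, so
$$x\alpha^{\prime}\alpha\alpha^{\prime} = w\alpha^{\prime} = \min(w\alpha^{-1}) = x\alpha^{\prime}.$$
This holds in both cases, so $\alpha^{\prime}\alpha\alpha^{\prime} = \alpha^{\prime}$. The reason the equality can fail for the looser definition (1), as in the Remark, is that there $x\alpha^{\prime}$ may be a non-minimal element $y$ of the fibre. Re-applying $\alpha^{\prime}$ then sends $y$ to the minimum of its fibre, which differs from $y$. Definition (2) removes exactly this possibility.

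The only delicate point, and the main obstacle, is checking that (2) is well defined and really yields an element of $T_n$. For $x \notin Im(\alpha)$ one needs some $z$ with $z\alpha < x$. This exists except possibly when $x < \min Im(\alpha)$. But $1\alpha = 1$ for every $\alpha \in D_n$, so $1 \in Im(\alpha)$, and every $x \notin Im(\alpha)$ satisfies $x > 1 = 1\alpha$. The choice of $z$ is not unique, but the argument above does not depend on which $z$ is chosen, so any fixed choice works.
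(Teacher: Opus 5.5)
Your proposal is correct and follows essentially the same route as the paper: a pointwise check in the two branches of (2), using that $x\alpha^{\prime}$ is the minimum of the fibre over some $w\in Im(\alpha)$. Hence $x\alpha^{\prime}\alpha=w$ and $w\alpha^{\prime}=x\alpha^{\prime}$. Your extra observation that a suitable $z$ always exists (because $1\alpha=1$ forces $1\in Im(\alpha)$), and that the argument does not depend on which $z$ is chosen, is a worthwhile detail the paper leaves implicit.
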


\begin{proof}
    Let $x \in X_n$. We will show that $x\alpha^{\prime}\alpha\alpha^{\prime}=x\alpha^{\prime}$. If $x \in Im(\alpha)$, then $$x\alpha^{\prime}\alpha\alpha^{\prime} =(min(x\alpha^{-1}))\alpha\alpha^{\prime}=x\alpha^{\prime}.$$
    If $x \notin Im(\alpha)$, then   $$x\alpha^{\prime}\alpha\alpha^{\prime}=(min((z\alpha)\alpha^{-1}))\alpha\alpha^{\prime}= z\alpha\alpha^{\prime}=(min((z\alpha)\alpha^{-1}))=x\alpha^{\prime},$$
    thus $\alpha^{\prime}\alpha\alpha^{\prime}=\alpha^{\prime}$.\\
\end{proof}

In what follows we show that it is only one element $\alpha^{\prime\prime} \in T_n$ that we need to add to $D_n$ to generate all elements of $Dom_{T_{n}}(D_n)$. Moreover, it will be shown that $Dom_{T_{n}}(D_n)$ is the smallest regular semigroup containing $D_n$.

\vspace{0.1cm}
\begin{proposition} \label{ndn}
Let $\alpha \in D_n$ defined by \\ 
\vspace{0.1cm}
\hspace{6cm} $x\alpha=max(1,x-1), \ \ \forall x\in X_n.$\\
and $\alpha^{\prime\prime}$ is given by\\
\vspace{0.1cm}
\hspace{6cm} $x\alpha^{\prime\prime}=min(x+1,n), \ \ \forall x>1, \ 1\alpha^{\prime\prime}=1$.\\ 
Then $Dom_{T_n}(D_n)=<D_n \bigcup \{{\alpha^{\prime\prime}}\}>$.
\end{proposition}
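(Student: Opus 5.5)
The inclusion $\supseteq$ comes from the results already proved. For the map $\alpha$ in the statement we have $x\alpha=\max(1,x-1)$, so $Im(\alpha)=\{1,\dots,n-1\}$, $1\alpha^{-1}=\{1,2\}$, and $x\alpha^{-1}=\{x+1\}$ for $2\le x\le n-1$. The element $n\notin Im(\alpha)$ is handled by choosing $z=n$, so that $z\alpha=n-1<n$ and $(z\alpha)\alpha^{-1}=\{n\}$. Rule (2) then gives
\[
1\alpha'=1,\qquad x\alpha'=x+1\ (2\le x\le n-1),\qquad n\alpha'=n .
\]
Hence $\alpha'=\alpha''$. Since rule (2) is a special case of rule (1), part 4 of Proposition~\ref{vx} gives $\alpha''\alpha\alpha''\in Dom_{T_n}(D_n)$, and the lemma for rule (2) gives $\alpha''\alpha\alpha''=\alpha''$. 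Thus $\alpha''\in Dom_{T_n}(D_n)$. A dominion is a subsemigroup containing $D_n$, so $\langle D_n\cup\{\alpha''\}\rangle\subseteq Dom_{T_n}(D_n)$.

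For $\subseteq$, I would first identify an explicit candidate. Every element of $D_n$ fixes $1$, and so does $\alpha''$. Apart from the identity, every element of $D_n$ is singular, and $\alpha''$ is singular because $(n-1)\alpha''=n\alpha''=n$. This suggests
\[
V=\{\beta\in T_n: 1\beta=1,\ \beta \text{ singular}\}\cup\{1_{X_n}\}.
\]
The plan is to prove both $Dom_{T_n}(D_n)\subseteq V$ and $V\subseteq\langle D_n\cup\{\alpha''\}\rangle$, which together with the first paragraph give equality.

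For $Dom_{T_n}(D_n)\subseteq V$, I would use Isbell's zigzag theorem. If $d$ is dominated, either $d\in D_n$, or there is a zigzag
\[
d=u_0y_1,\quad u_0=x_1u_1,\quad u_{2i-1}y_i=u_{2i}y_{i+1},\quad x_iu_{2i}=x_{i+1}u_{2i+1},\quad u_{2m-1}y_m=u_{2m},\quad x_mu_{2m}=d,
\]
with all $u_j\in D_n$. Because each $u_j$ fixes $1$, chasing the point $1$ through the $y$-equations gives
\[
1d=1y_1=1u_1y_1=1u_2y_2=1y_2=\dots=1y_m=1u_{2m}=1 .
\]
So every dominated map fixes $1$. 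For the rank condition, $|Im(d)|\le|Im(u_0)|$ and $|Im(d)|\le|Im(u_{2m})|$. If $d$ were a permutation, this forces $u_0=u_{2m}=1_{X_n}$, the only bijection in $D_n$. Then $d=y_1$ and $x_m=d$, and I would push the zigzag equalities further to conclude $d=1_{X_n}$. I expect this to need some care; one option is to compare with the closed subsemigroup $\{\beta:1\beta=1\}$ together with rank considerations.

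For $V\subseteq\langle D_n\cup\{\alpha''\}\rangle$, Howie's theorem says $T_n\setminus S_n$ is generated by its rank-$(n-1)$ idempotents. I would adapt it to show that singular maps fixing $1$ are products of rank-$(n-1)$ idempotents $e_{i\to j}$ (sending $i$ to $j$ and fixing everything else) with $i\ne1$. Those with $j<i$ already lie in $D_n$. The main obstacle is producing $e_{i\to j}$ with $1<i<j$ from $\alpha''$ and elements of $D_n$. I would first try products of the form $\beta\alpha''\gamma$ with $\beta,\gamma\in D_n$: $\alpha''$ moves a block upward by one step, and composing with suitable decreasing idempotents should realise each upward merge $i\to i+1$. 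Then $e_{i\to j}$ for general $j>i$ would follow by composing these steps.
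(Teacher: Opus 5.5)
Your inclusion $\langle D_n\cup\{\alpha''\}\rangle\subseteq Dom_{T_n}(D_n)$ is correct. So is your target set $V$, which is more accurate than the paper's $T_n^{(1)}$, since that set literally contains every permutation fixing $1$. Your zigzag argument that dominated maps fix $1$ is also correct.

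\textbf{The gap.} The genuine gap is your last sentence: $e_{i\to j}$ with $j\ge i+2$ does not follow by composing one-step merges. Your $\beta\alpha''\gamma$ does produce $e_{i\to i+1}$ if you take $x\beta=x$ for $x\le i$, $x\beta=x-1$ for $x>i$, and $\gamma$ lowering each of $3,\dots,i$ by one and fixing the rest. But the obvious compositions fail:
\begin{itemize}
\item $e_{i\to i+1}e_{i+1\to i+2}\cdots e_{j-1\to j}$ sends all of $i,\dots,j$ to $j$ and has rank $n-(j-i)$.
\item $e_{j-1\to j}\cdots e_{i\to i+1}$ is a shift whose non-trivial kernel class is $\{j-1,j\}$, so it cannot even be a left factor of $e_{i\to j}$.
\end{itemize}
Elements of $D_n$ never raise a point and $\alpha''$ raises a point by at most one. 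So $i$ can only overtake $i+1,\dots,j-1$ if their images are first pushed below that of $i$ and later brought back up.

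The device that does this, and the real content of the paper's lemma on $J^*_{n-1}$, is a power of a rank-$(n-1)$ map with a cycle. Concretely, let $x\beta=x$ $(x\le j-1)$, $j\beta=i$, $x\beta=x-1$ $(x>j)$, and let $\gamma$ be as above; both lie in $D_n$. Then $\theta=\beta\alpha''\gamma$ fixes every point outside $\{i,\dots,j\}$, sends $k\mapsto k+1$ for $i\le k\le j-1$, and sends $j\mapsto i+1$. So $\{i+1,\dots,j\}$ is a cycle of length $j-i$ into which $i$ feeds, and $\theta^{j-i}=e_{i\to j}$. For $j=i+1$ this is your $e_{i\to i+1}$. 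The paper uses the same trick with $(f_{s,r}\alpha'_{s+1}f_{r,2})^{t}$, where $t$ is an lcm of cycle lengths.

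\textbf{Two smaller points.} First, the permutation case you leave open closes by a short induction along the zigzag. Suppose $d$ is a bijection.
\begin{itemize}
\item $d=u_0y_1$ forces $u_0=1_{X_n}$ (the only bijection in $D_n$) and $y_1=d$.
\item $x_1u_1=u_0$ then forces $u_1=x_1=1_{X_n}$.
\item If $u_{2k-1}=x_k=1_{X_n}$ and $y_k=d$, then $u_{2k}y_{k+1}=u_{2k-1}y_k=d$ forces $u_{2k}=1_{X_n}$ and $y_{k+1}=d$.
\item Likewise $x_{k+1}u_{2k+1}=x_ku_{2k}=1_{X_n}$ forces $u_{2k+1}=x_{k+1}=1_{X_n}$.
\end{itemize}
Finally $d=u_{2m-1}y_m=u_{2m}\in D_n$, so $d=1_{X_n}$.

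Second, adapting Howie's theorem is more than the rank-lowering factorisation in the paper's lemma on $J^*_r$, which only reduces you to all of $J^*_{n-1}$. You still have to show that the non-idempotent rank-$(n-1)$ maps fixing $1$ are products of the $e_{i\to j}$ with $i\neq 1$. This is true, but it must be proved. The paper itself passes over it, generating only the idempotents $f_{u,v}$ and $g_{r,s}$ of $J^*_{n-1}$.
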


We observe that for all $x \in X_n$
$$1\alpha=2\alpha=1, \ \ \ \ \text{and} \ \ \ \  x\alpha=x-1 \ \ (x > 2).$$
Also, 
$$1\alpha^{\prime\prime}=1, \ \ \ \ x \alpha^{\prime\prime}=x+1 \ \ (1 < x < n), \ \ \ \  \text{and} \ \ \ \ n\alpha^{\prime\prime}=n.$$

\vspace{0.2cm}
Before proving this proposition, we need to prove several lemmas as follows.

\vspace{0.5cm}
Consider the semigroup \ $T_n^{(1)}=\{\alpha \in T_n : 1\alpha=1\}$, let \ $J^*_{r}=\{\alpha \in T_n^{(1)} : |Im(\alpha)|=r\}$, where $1 \leq r \leq n-1$, and let $E_{n-1}$ the set of idempotents in $J^*_{n-1}$.

\vspace{0.5cm}
We will adopt the proof of Lemma 6.3.2 in \cite{6} to prove the following lemma.

\begin{lemma}
 $J^*_{r}\subseteq \ <J^*_{n-1}>=T_n^{(1)}$. 
\end{lemma}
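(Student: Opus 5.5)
We need to show $T_n^{(1)} = \langle J^*_{n-1}\rangle$, i.e. every $\alpha\in T_n$ with $1\alpha=1$ and $|Im(\alpha)|=r\le n-1$ is a product of elements of $J^*_{n-1}$. (The identity lies in $T_n^{(1)}$ but not in $J^*_{n-1}$, so we read the statement as concerning the singular part $J^*_1\cup\dots\cup J^*_{n-1}$; alternatively the identity is obtained as the empty product in the monoid sense.) The inclusion $\langle J^*_{n-1}\rangle\subseteq T_n^{(1)}$ is immediate, since $T_n^{(1)}$ is closed under composition. So we only need the reverse inclusion. Following Lemma 6.3.2 in \cite{6}, we argue by downward induction on $r$: the case $r=n-1$ is trivial, and we show $J^*_r\subseteq J^*_{r+1}J^*_{r+1}$ for $1\le r\le n-2$.

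For the inductive step, let $\alpha\in J^*_r$ with $r\le n-2$, and write $\alpha=\begin{pmatrix} A_1&\cdots&A_r\\ a_1&\cdots&a_r\end{pmatrix}$, where $1\in A_1$ and $a_1=1$. Since $r<n-1$, either some block has at least three elements, or at least two blocks have at least two elements.

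\emph{Case 1: some block has at least two elements besides a chosen point.} Pick a block $A_j$ with $|A_j|\ge 2$. Choose $c\in A_j$ with $c\neq 1$; this is possible because $|A_j|\ge2$. Split $A_j$ into $\{c\}$ and $A_j\setminus\{c\}$, so that $1$ remains in the part containing $1$ when $j=1$. This gives a partition into $r+1$ blocks. Choose an element $b\notin Im(\alpha)$ with $b\ne 1$; it exists because $|Im(\alpha)|=r\le n-2$ and $1\in Im(\alpha)$. Define $\beta$ by sending $A_i\mapsto a_i$ for $i\ne j$, $A_j\setminus\{c\}\mapsto a_j$, and $c\mapsto b$. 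Then $\beta\in J^*_{r+1}$ and $1\beta=1$.

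Next define $\gamma$ to fix every point of $Im(\alpha)$ and to send $b\mapsto a_j$. On the remaining $n-r-1\ge1$ points outside $Im(\alpha)\cup\{b\}$, we must choose values so that $|Im(\gamma)|=r+1$. For this, $\gamma$ must hit exactly one new value $d\notin Im(\alpha)$.

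Since $1\in Im(\alpha)$, we have $1\gamma=1$, and $\beta\gamma=\alpha$. The remaining task is to choose $d$ so that the image count is exactly $r+1$. The points outside $Im(\alpha)\cup\{b\}$ number $n-r-1\ge1$; send all of them to one such point $d$. Then $Im(\gamma)=Im(\alpha)\cup\{d\}$, which has size $r+1$. So $\gamma\in J^*_{r+1}$.

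This construction in fact works whenever some block has size at least two, and such a block always exists since $r<n$. The two-case split above is therefore unnecessary, and the step reduces to this single construction.

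The main obstacle is making sure every factor fixes $1$. We handle this by never moving $1$ out of its block in $\beta$, since $c\ne 1$, and by having $\gamma$ fix all of $Im(\alpha)$, which contains $1$. We also need to keep the ranks exactly $r+1$. This uses $r\le n-2$ twice: once to find $b$, and once to guarantee a point $d$ distinct from $b$. Iterating the step gives $J^*_r\subseteq\langle J^*_{n-1}\rangle$ for all $r$.
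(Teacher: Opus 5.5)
Your argument is correct. It follows the same overall scheme as the paper, namely downward induction on rank with each element of $J^*_r$ factored through rank $r+1$, but the factorization differs.

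In fact your splitting map $\beta$ is exactly the paper's $\beta$: it sends one point $c\neq 1$ of a non-singleton block to a fresh value $b\notin Im(\alpha)$ and is otherwise $\alpha$. The two proofs differ only in how the split is undone.
\begin{itemize}
\item The paper undoes it on the domain side. It writes $\alpha=\epsilon\beta$, where $\epsilon\in E_{n-1}$ is the idempotent sending $c$ to another point of its block and fixing everything else.
\item You undo it on the image side. You write $\alpha=\beta\gamma$, with $\gamma$ retracting $b$ onto $a_j$.
\end{itemize}
The paper's version gives a little more information. Iterating it writes $\alpha$ as a product of rank-$(n-1)$ idempotents followed by one element of $J^*_{n-1}$. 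It also needs no spare point beyond $b$.

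You need the extra point $d$ only because you insisted on $\gamma\in J^*_{r+1}$. If you instead let $\gamma$ fix every point except $b\mapsto a_j$, then $\gamma$ is itself an idempotent in $E_{n-1}$. The point $d$ becomes unnecessary, and you get the mirror-image inclusion $J^*_r\subseteq J^*_{r+1}E_{n-1}$. The vestigial ``Case 1'' heading and two-case split can simply be deleted.

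One correction to your preliminary remark about the statement. The obstruction to the literal equality $\langle J^*_{n-1}\rangle=T_n^{(1)}$ is not just the identity. As defined, $T_n^{(1)}$ contains all $(n-1)!$ permutations of $X_n$ fixing $1$; for $n\ge 3$ this includes, for example, the map fixing $1$ and swapping $2$ and $3$. Rank cannot increase under composition, so none of these lies in $\langle J^*_{n-1}\rangle$. This holds whether or not you allow an empty product, so your ``alternatively'' fix does not work.

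The only correct reading is therefore your first one, $\langle J^*_{n-1}\rangle=J^*_1\cup\cdots\cup J^*_{n-1}$. This is also the reading consistent with the paper's later count $n^{n-1}-(n-1)!+1$.
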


\begin{proof}
Let 

$$\alpha=
\begin{pmatrix}  A_1&A_2&A_3&...&A_r\\1&b_2&b_3&...&b_r
\end{pmatrix}
\in J^*_r,$$\\[5pt]
and let $b_i\alpha^{-1}=A_i \,\,\, (i=2,3,...,r)$, $ 1\alpha^{-1}=A_1$ and $1\in A_1$. Since not all of the sets $A_i$ are singletons, we may assume without loss of generality that $A_1=\{1,a_1,a_1^\prime,...\}$ has at least two elements. For all $x \in X_n$, we define $\epsilon: X_n \rightarrow X_n$ by
$$a_1\epsilon=a_1^\prime, \ \ \ \  \ \  x\epsilon=x \ \ (x \neq a_1),$$
and define $\beta: X_n \rightarrow X_n$ by
$$a_1\beta=b_{r+1} \ \ (b_{r+1} \notin Im(\alpha)), \ \ \ \ \ x\beta=1 \ \ (x \in A_1\setminus \{a_1\}), \ \ \ \ \ x\beta=b_i \ \ (x \in A_i, \ i\geq 2).$$
Then $\epsilon \in E_{n-1}$ and $|Im(\beta)|=|Im(\alpha)|+1$, so $\beta \in J^*_{r+1}$. Now we have that  
 $$a_1\epsilon\beta=1, \ \ \ \ \ x\epsilon\beta=1 \ \ (x \in A_1\setminus \{a_1\}), \ \ \ \ \ x\epsilon\beta=b_i \ \ (x \in A_i, \ i\geq 2).$$
Thus for all $x\in X_n$, $x\epsilon\beta=x\alpha$, and so $\alpha=\epsilon\beta$.\\
\end{proof}
As a consequence of this lemma, we deduce that $T_n^{(1)}=<J^*_{n-1}>$.

\newpage
For any $\alpha$ in $T_n$, a cycle for $\alpha$ is a set of elements $\{x_1,x_2,x_3...,x_k\} \subseteq X_n$ such that

\begin{center}
    $x_1 \rightarrow x_2,\ x_2 \rightarrow x_3,..., x_k \rightarrow x_1$.
\end{center}
 The length of a cycle is the number of elements in the cycle, for the cycle $\{x_1,x_2,x_3...,x_k\}$, the length is $k$. The least common multiple $(lcm)$ of the lengths of the cycles is the smallest number $t$ such that $\alpha^{t}(x)=x$ for all $x$ that are part of cycles \cite{50}.

\vspace{0.5cm}
\begin{remark}
If $\alpha$ is an idempotent in $J^*_{n-1}$, then there is only one $x$ in $X_n$ such that $x\alpha <x$ \ or \  $x\alpha>x$, so every idempotent in $J^*_{n-1}$ either increasing or decreasing.
\end{remark}

\vspace{0.5cm}
\begin{lemma}
  $J^*_{n-1}\subseteq \ <D_n \bigcup \{\alpha^{\prime\prime}\}>$.
\end{lemma}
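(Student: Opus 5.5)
The strategy is to reduce to idempotents and then build each required idempotent from $\alpha^{\prime\prime}$ and order-decreasing maps. For $a\neq b$ in $X_n$, write $e_{a\to b}$ for the map sending $a$ to $b$ and fixing every other point. By the Remark above, the idempotents in $J^*_{n-1}$ are exactly the maps $e_{a\to b}$ with $a\neq b$, and $a\geq 2$ because $1$ must be fixed. I will show two things.

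\begin{enumerate}[(i)]
\item Every element of $J^*_{n-1}$ is a product of elements of $E_{n-1}$.
\item Every element of $E_{n-1}$ lies in $\langle D_n\cup\{\alpha^{\prime\prime}\}\rangle$.
\end{enumerate}

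For (i) I will adapt Howie's argument that each element of $J_{n-1}$ in $T_n$ is a product of rank $n-1$ idempotents, tracing the path $x\mapsto x\alpha$ through the unique collapsed pair. The only extra point is to check that when $1\alpha=1$, every idempotent used in the factorisation also fixes $1$. In Howie's construction the idempotents move only points on the chain leading to the collapsed pair. Since $1$ is a fixed point of $\alpha$, it never needs to be moved, so the factorisation stays inside $T_n^{(1)}$.

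For (ii), the decreasing idempotents $e_{a\to b}$ with $b<a$ already lie in $D_n$, so only the increasing ones ($2\le a<b$) remain. The key construction is a partial shift. For $2\le a\le n-1$, let $\sigma_a$ fix every $x<a$, send $x\mapsto x+1$ for $a\le x<n$, and fix $n$. I claim $\sigma_a=\alpha^{\prime\prime}\tau$, where $\tau\in D_n$ is given by $2\tau=1$, $y\tau=y-1$ for $3\le y\le a$, and $y\tau=y$ otherwise.
\begin{itemize}
\item Points $2\le x\le a-1$ go to $x+1\in[3,a]$ and then back to $x$.
\item Points $x\ge a$ go to $x+1$ (or to $n$ if $x=n$), which $\tau$ fixes.
\item The point $1$ stays at $1$.
\end{itemize}
Next let $\psi\in D_n$ fix every $x\le a$ and send $x\mapsto x-1$ for $x>a$. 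Then $\psi\sigma_a$ fixes $x<a$, sends both $a$ and $a+1$ to $a+1$, and fixes every $x>a+1$. Hence $\psi\sigma_a=e_{a\to a+1}$, and all adjacent increasing idempotents lie in $\langle D_n\cup\{\alpha^{\prime\prime}\}\rangle$.

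The main obstacle is passing from the adjacent idempotents $e_{a\to a+1}$ to a general $e_{a\to b}$ with $b>a+1$. Naive products either drop the rank to $n-2$ (for example $e_{a\to a+1}e_{a+1\to a+2}$) or fail to be idempotent. I have not verified any factorisation for this case, so what follows is a plan.
\begin{itemize}
\item First attempt: argue by induction on $b-a$, generalising the construction above. Replace $\sigma_a$ by a shift $\sigma_{a,b}$ that moves only the segment $[a,b-1]$ up by one. Such a shift should again have the form $\alpha^{\prime\prime}\tau'$ for a suitable $\tau'\in D_n$ that undoes the shift outside $[a,b-1]$. Then precompose with a rank $n-1$ map sending $a$ to $b-1$ and $x\mapsto x-1$ on $(a,b]$. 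That map is not decreasing, so it must itself be obtained as a product, for instance $e_{a+1\to a+2}e_{a\to a+1}$-type non-idempotent chains built at the earlier steps of the induction.
\item Fallback: observe that the proof of (i) does not really need idempotents. The products $e_{a+1\to a+2}e_{a\to a+1}$ etc.\ already give the rank $n-1$ "chain" maps ($a\to a+1\to\cdots$) required in Howie's path factorisation. So one could combine the decreasing idempotents, the adjacent increasing idempotents and these chains directly, and so obtain every element of $J^*_{n-1}$.

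Together with the preceding lemma ($T_n^{(1)}=\langle J^*_{n-1}\rangle$), this is what Proposition \ref{ndn} needs.
\end{itemize}
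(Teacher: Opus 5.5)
You correctly handle the decreasing idempotents and the adjacent ones: $\sigma_a=\alpha^{\prime\prime}\tau$ and $\psi\sigma_a=e_{a\to a+1}$ both check out, and this is more direct than the paper's route when $b=a+1$. Step (i) is also fine in substance. Each idempotent $e_{x\to y}$ used to factor $\beta\in J^*_{n-1}$ has $x$ either moved by $\beta$ or equal to the point missing from $Im(\beta)$, so $x\neq 1$. The paper in fact passes over this step without comment.

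The genuine gap is the one you flag yourself. You never show $e_{a\to b}\in\langle D_n\cup\{\alpha^{\prime\prime}\}\rangle$ for $b\geq a+2$, and without all of $E_{n-1}$ step (i) cannot be applied.

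Your first plan hinges on the map $\mu$ ($a\mapsto b-1$, $x\mapsto x-1$ on $(a,b]$). It is true that $\mu\,e_{b-1\to b}\cdots e_{a\to a+1}=e_{a\to b}$. But $\mu$ is a $(b-a)$-cycle with a tail, and you give no way of producing it.

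The fallback is circular. Howie's path factorisation uses idempotents along the edges $x\to x\alpha$ of the functional graph, which are arbitrary pairs rather than consecutive integers; for $\alpha=e_{a\to b}$ it is just $e_{a\to b}$. Moreover $\alpha^{\prime\prime}=e_{n-1\to n}e_{n-2\to n-1}\cdots e_{2\to 3}$, so the semigroup generated by $D_n$ and the adjacent idempotents is exactly $\langle D_n\cup\{\alpha^{\prime\prime}\}\rangle$. That plan therefore merely restates the lemma.

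The missing idea is to take an idempotent power of a rank $n-1$ map in which every point but one lies on a cycle, and that one point enters a cycle in a single step. Composing left to right as in the paper, set
\[ \theta=e_{b\to a}\,e_{b-1\to b}\,e_{b-2\to b-1}\cdots e_{a\to a+1}. \]
Then:
\begin{itemize}
\item $a\theta=b\theta=a+1$;
\item $x\theta=x+1$ for $a+1\leq x\leq b-1$;
\item $\theta$ fixes every other point.
\end{itemize}
So $\theta$ is the cycle $a+1\to a+2\to\cdots\to b\to a+1$ of length $b-a$ together with the tail edge $a\to a+1$. Hence $\theta^{b-a}=e_{a\to b}$. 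Every factor of $\theta$ lies in $D_n$ or is one of your adjacent idempotents, so this closes the gap.

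This is precisely the paper's device. It obtains $g_{r,s}$ as $(f_{s,2}\alpha^{\prime}_{s+1})^t$ or $(f_{s,r}\alpha^{\prime}_{s+1}f_{r,2})^t$, where $\alpha^{\prime}_y=f_{n,z}\alpha^{\prime\prime}$ for suitable $z$ and $t$ is an lcm of cycle lengths. The factors are arranged so that $r$ and $s$ have the same image and all other points are cyclic.
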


\begin{proof}
  We need to prove that $J^*_{n-1}
  \subseteq \ <D_n \bigcup \{\alpha^\prime\}>$ $\subseteq \ <D_n \bigcup \{\alpha^{\prime\prime}\}>$. 
Let $\alpha$ and $\alpha^{\prime\prime}$ defined by Proposition \ref{ndn},

\vspace{0.2cm}
\begin{center}
$\alpha=
\begin{pmatrix}  
1&2&3&4&...&n-1&n\\1&1&2&3&...&n-2&n-1
\end{pmatrix}$ and
$\alpha^{\prime\prime}=
\begin{pmatrix}  
1&2&3&4&...&n-1&n\\1&3&4&5&...&n&n
\end{pmatrix}$\\[10pt]
\end{center}
and let $\alpha^{\prime}_y$ be a bijection from $\{2,3,...,n-1\}$ to $\{3,4,...,n\}$, also $min(1\alpha^{-1})=1$, $min(x\alpha^{-1})=x+1 \ (2\leq x <n$), and \ $n\alpha^{\prime}_y=y\in \{1,3,4,...,n\}$, so

\vspace{0.3cm}
$$\alpha^{\prime}_y=
\begin{pmatrix}  
1&2&3&4&...&n-1&n\\1&3&4&5&...&n&y
\end{pmatrix}.$$

\vspace{0.5cm}
Let $f_{u,v}$ be the decreasing idempotent maps in $J^*_{n-1}$, then

\vspace{0.2cm}
$$f_{u,v}=
\begin{pmatrix}  
1&2&...&u-1&u&u+1&...&n\\1&2&...&u-1&v&u+1&...&n
\end{pmatrix}, \ \ \ (1\leq v <u \leq n).$$\\[10pt]
We see that $f_{u,v} \in \ <D_n \bigcup \{\alpha^\prime\}>$. Let $g_{r,s}$ be the increasing idempotent maps in $J^*_{n-1}$, then

\vspace{0.2cm}
$$g_{r,s}=
\begin{pmatrix}  
1&2&...&r&r+1&r+2&...&n\\1&2&...&s&r+1&r+2&...&n
\end{pmatrix}, \ \ \ (1<r<s \leq n).$$\\[10pt]
We want to generate $g_{r,s}$ by some $f_{u,v}$ and $\alpha^{\prime}$,
and we will do that by considering two cases.\\[10pt]
Case (1) If $r=2$, then $g_{2,s}$ has four cases depending on $s$. Now we will study them one by one.

\vspace{0.3cm}
For $s=3$, then

$$f_{3,2} \alpha^{\prime}_{4}=
\begin{pmatrix}  
1&2&3&4&...&n-1&n\\1&3&3&5&...&n&4
\end{pmatrix}$$

have cycles of lengths $1$, $1$ and $n-3$, $( \{1,1\}, \{3,3\},$ and $\{4,5,...,n-1,n\},$ resp.). Since $2$ linked to the cycle $\{3,3\}$ of length $1$, then 
$f_{3,2} \alpha^{\prime}_{4}(2)=$$(f_{3,2} \alpha^{\prime}_{4})^{lcm(1,n-3)}(2)=3$, and so $(f_{3,2} \alpha^{\prime}_{4})^{(n-3)}=g_{2,3}$.

\vspace{0.5cm}
For $3<s<n-1$,
then $$f_{s,2} \alpha^{\prime}_{s+1}=
\begin{pmatrix}  
1&2&3&...&s-1&s&s+1&...&n-1&n\\1&3&4&...&s&3&s+2&...&n&s+1
\end{pmatrix}$$
have cycles of lengths $1$, $s-2$ and $n-s$, $( \{1,1\}, \{3,4,...,s-1,s\},$ and $\{s+1,s+2,...,n-1,n\}$, resp.). Since $2$ linked to the cycle $\{3,4,...,s-1,s\}$ of length $s-2$ and $(f_{s,2} \alpha^{\prime}_{s+1})^{(s-2)}$ maps every element in this cycle to itself, then $(f_{s,2} \alpha^{\prime}_{s+1})^{(s-2)}(2)=$$(f_{s,2} \alpha^{\prime}_{s+1})^{lcm(1,s-2,n-s)}(2)=s$, and so $(f_{s,2} \alpha^{\prime}_{s+1})^{lcm(s-2,n-s)}=g_{2,s}$.

\vspace{0.5cm}
For $s=n-1$,
then $$f_{n-1,2} \alpha^{\prime}_{n}=
\begin{pmatrix}  
1&2&3&...&n-2&n-1&n\\1&3&4&...&n-1&3&n
\end{pmatrix}$$
have cycles of lengths $1$, $n-3$ and $1$, $( \{1,1\}, \{3,4,...,n-2,n-1\},$ and $\{n,n\}$, resp.). Since $2$ linked to the cycle $\{3,4,...,n-2,n-1\}$ of length $n-3$ and $(f_{n-1,2} \alpha^{\prime}_{n})^{(n-3)}$ maps every element in this cycle to itself, then $(f_{n-1,2} \alpha^{\prime}_{n})^{(n-3)}(2)=(f_{n-1,2} \alpha^{\prime}_{n})^{lcm(1,n-3)}(2)=n-1$, and so $(f_{n-1,2} \alpha^{\prime}_{n})^{(n-3)}=g_{2,n-1}$.

\vspace{0.5cm}
For $s=n$,
then $$f_{n,2} \alpha^{\prime}_{1}=
\begin{pmatrix}  
1&2&3&...&n-1&n\\1&3&4&...&n&3
\end{pmatrix}$$ 
have cycles of lengths $1$ and $n-2$, $( \{1,1\},$ and $\{3,4,...,n-1,n\}$, resp.). Since $2$ linked to the cycle $\{3,4,...,n-1,n\}$ of length $n-2$ and $(f_{n,2} \alpha^{\prime}_{1})^{(n-2)}$ maps every element in this cycle to itself, then $(f_{n,2} \alpha^{\prime}_{1})^{(n-2)}(2)=(f_{n,2} \alpha^{\prime}_{1})^{lcm(1,n-2)}(2)=n$, and so $(f_{n,2} \alpha^{\prime}_{1})^{(n-2)}=g_{2,n}$.

\vspace{1cm}
Case (2) If $r>2$, then $g_{r,s}$ has three cases depending on $s$.

\vspace{0.5cm}
For $3<s<n-1$, then 
$$f_{s,r} \alpha^{\prime}_{s+1}f_{r,2}=
\setcounter{MaxMatrixCols}{14}
\begin{pmatrix}
1&2&3&...&r-1&r&r+1&...&s-1&s&s+1&...&n-1&n\\1&3&4&...&2&r+1&r+2&...&s&r+1&s+2&...&n&s+1
\end{pmatrix}$$
have cycles of lengths $1$, $r-2$, $s-r$ and $n-s$, $( \{1,1\}, \{2,3,4,...,r-1\}, \{r+1,r+2,...,s-1,s\}$ and $\{s+1,s+2,...,n-1,n\}$, resp.). Since $r$ linked to the cycle $\{r+1,r+2,...,s-1,s\}$ of length $s-r$ and $(f_{s,r} \alpha^{\prime}_{s+1}f_{r,2})^{(s-r)}$ maps every element in this cycle to itself, then $(f_{s,r} \alpha^{\prime}_{s+1}f_{r,2})^{(s-r)}(r)=(f_{s,r} \alpha^{\prime}_{s+1}f_{r,2})^{lcm(r-2,s-r,n-s)}(r)=s$, and so $(f_{s,r} \alpha^{\prime}_{s+1}f_{r,2})^{lcm(1,r-2,s-r,n-s)}=g_{r,s}$.

\vspace{0.5cm}
For $s=n-1$, then

$$f_{n-1,r} \alpha^{\prime}_{n}f_{r,2}=
\begin{pmatrix}  
1&2&3&...&r-1&r&r+1&...&n-2&n-1&n\\1&3&4&...&2&r+1&r+2&...&n-1&r+1&n
\end{pmatrix}$$

have cycles of lengths $1$, $r-2$, $n-1-r$ and $1$,  $( \{1,1\}, \{2,3,4,...,r-1\}, \{r+1,r+2,...,n-2,n-1\}$ and $\{n,n\}$, resp.). Since $r$ linked to the cycle $\{r+1,r+2,...,n-2,n-1\}$ of length $n-1-r$ and $(f_{n-1,r} \alpha^{\prime}_{n}f_{r,2})^{(n-1-r)}$ maps every element in this cycle to itself, then $(f_{n-1,r} \alpha^{\prime}_{n}f_{r,2})^{(n-1-r)}(r)=(f_{n-1,r} \alpha^{\prime}_{n}f_{r,2})^{lcm(1,r-2,n-1-r)}(r)=n-1$, and so $(f_{n-1,r} \alpha^{\prime}_{n}f_{r,2})^{lcm(1,r-2,n-1-r)}=g_{r,n-1}$.

\vspace{0.5cm}
For $s=n$, then 
$$f_{n,r} \alpha^{\prime}_{1}f_{r,2}=
\begin{pmatrix}  
1&2&3&...&r-1&r&r+1&...&n-1&n\\1&3&4&...&2&r+1&r+2&...&n&r+1
\end{pmatrix}$$
have cycles of lengths $1$, $r-2$ and $n-r$, $( \{1,1\}, \{2,3,4,...,r-1\},$ and $\{r+1,r+2,...,n-1,n\}$, resp.). Since $r$ linked to the cycle $\{r+1,r+2,...,n-1,n\}$ of length $n-r$ and $(f_{n,r} \alpha^{\prime}_{1}f_{r,2})^{(n-r)}$ maps every element in this cycle to itself, then $(f_{n,r} \alpha^{\prime}_{1}f_{r,2})^{(n-r)}(r)=(f_{n,r} \alpha^{\prime}_{1}f_{r,2})^{lcm(1,r-2,n-r)}(r)=n$,
and so $(f_{n,r} \alpha^{\prime}_{1}f_{r,2})^{lcm(1,r-2,n-r)}=g_{r,n}$.

\vspace{0.5cm}
From case (1) and case (2), we conclude that 

\begin{equation*}
    g_{r,s}= \begin{cases}
     (f_{s,2}\alpha^{\prime}_{s+1})^{t} & \text{if } r=2\\
    (f_{s,r}\alpha^{\prime}_{s+1}f_{r,2})^{t} & \text{if } r>2
    \end{cases}
\end{equation*}\\[10pt]
where $t=lcm(s-r,n-s,r-2)$ and $s \neq r \neq n$. Therefore, $J^*_{n-1}\subseteq \ <D_n \bigcup \{\alpha^\prime\}>$.

\vspace{0.9cm}
Now we want to generate $\alpha^\prime$ by some $f_{u,v}$ and $\alpha^{\prime\prime}$. Let

\vspace{0.5cm}
$$f_{n,z}=
\begin{pmatrix}  
1&2&3&4&...&n-1&n\\1&2&3&4&...&n-1&z
\end{pmatrix} \ \ \ (1\leq z <n).$$

Then

\vspace{0.5cm}
\begin{equation*}
f_{n,z}\alpha^{\prime\prime}=
    \begin{cases}
    \begin{pmatrix}  
1&2&3&4&...&n-1&n\\1&3&4&5&...&n&1
\end{pmatrix}=\alpha^{\prime}_1 & \text{if } z=1\\
\\
    \begin{pmatrix}  
1&2&3&4&...&n-1&n\\1&3&4&5&...&n&z+1
\end{pmatrix}=\alpha^{\prime}_{z+1}  & \text{if } 2\leq z <n-1\\
\\
    \begin{pmatrix}  
1&2&3&4&...&n-1&n\\1&3&4&5&...&n&n
\end{pmatrix}=\alpha^{\prime}_n=\alpha^{\prime\prime}  & \text{if } z=n-1
    \end{cases}
\end{equation*}\\[10pt]
Thus $\alpha^{\prime}_y=f_{n,z}\alpha^{\prime\prime}$ and so $<D_n \bigcup \{\alpha^{\prime}\}>$ $\subseteq \ <D_n \bigcup \{\alpha^{\prime\prime}\}>$.\\
\end{proof}

\begin{lemma}
  Let $\beta \in T_n$ be such that $1\beta>1$. Then $\beta \notin <D_n \bigcup \{\alpha^{\prime\prime}\}>$.
\end{lemma}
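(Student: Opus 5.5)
The natural approach is to find a property shared by every generator that is preserved under composition and fails for $\beta$. The candidate is the condition $1\gamma=1$, i.e.\ membership in $T_n^{(1)}=\{\gamma\in T_n : 1\gamma=1\}$. Since $<D_n \bigcup \{\alpha^{\prime\prime}\}>$ is the smallest subsemigroup of $T_n$ containing $D_n$ and $\alpha^{\prime\prime}$, it suffices to show that $D_n\subseteq T_n^{(1)}$, that $\alpha^{\prime\prime}\in T_n^{(1)}$, and that $T_n^{(1)}$ is a subsemigroup of $T_n$.

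Each step is immediate. If $\gamma\in D_n$, then $1\gamma\le 1$, and since $X_n=\{1,\dots,n\}$ this forces $1\gamma=1$, so $\gamma\in T_n^{(1)}$. By the definition in Proposition~\ref{ndn}, $1\alpha^{\prime\prime}=1$, so $\alpha^{\prime\prime}\in T_n^{(1)}$. If $\gamma,\delta\in T_n^{(1)}$, then $1(\gamma\delta)=(1\gamma)\delta=1\delta=1$, so $T_n^{(1)}$ is closed under composition.

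It follows that $<D_n \bigcup \{\alpha^{\prime\prime}\}>\subseteq T_n^{(1)}$. Combined with the preceding lemmas, this in fact gives equality: $T_n^{(1)}=<J^*_{n-1}>\subseteq <D_n \bigcup \{\alpha^{\prime\prime}\}>$. Now if $1\beta>1$, then $\beta\notin T_n^{(1)}$, and hence $\beta\notin <D_n \bigcup \{\alpha^{\prime\prime}\}>$.

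There is no real obstacle here. The only point needing care is that every element of the generated semigroup is a finite product of generators, so membership in $T_n^{(1)}$ passes to all of them by induction on the length of the product.
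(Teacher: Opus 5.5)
Your proof is correct and follows the paper's argument: every generator in $D_n\cup\{\alpha^{\prime\prime}\}$ fixes $1$, and fixing $1$ is preserved under composition, a closure step you make explicit where the paper leaves it implicit. Your side remark that $<D_n \bigcup \{\alpha^{\prime\prime}\}>=T_n^{(1)}$ follows is not needed for the lemma, and it is false for $n\ge 3$: the only permutation in $D_n$ is the identity and $\alpha^{\prime\prime}$ has rank $n-1$, so the generated semigroup contains no permutation other than the identity, whereas $T_n^{(1)}$ contains all $(n-1)!$ permutations fixing $1$ (this is why the paper's count is $n^{n-1}-(n-1)!+1$, and it also shows $<J^*_{n-1}>\neq T_n^{(1)}$).
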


\begin{proof}
   Let $\beta \in T_n$. Since $\forall \ \alpha\in D_n$, $1\alpha=1$ and $1\alpha^{\prime\prime}=1$, then $\beta \notin <D_n \bigcup \{{\alpha^{\prime\prime}}\}>$.
\end{proof}

\vspace{0.5cm}
\begin{corollary}\label{nmn}
  $<D_n \bigcup \{\alpha^{\prime\prime}\}>=T_n^{(1)}.$  
\end{corollary}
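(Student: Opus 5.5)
The plan is to prove the two inclusions $<D_n \bigcup \{\alpha^{\prime\prime}\}> \subseteq T_n^{(1)}$ and $T_n^{(1)} \subseteq <D_n \bigcup \{\alpha^{\prime\prime}\}>$ separately, using the lemmas just established.

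For the first inclusion, observe that every generator fixes $1$. Indeed, if $\gamma\in D_n$ then $1\gamma\le 1$, so $1\gamma=1$. By definition $1\alpha^{\prime\prime}=1$. The set $T_n^{(1)}=\{\beta\in T_n: 1\beta=1\}$ is closed under composition, since $1\beta\gamma=1\gamma=1$ whenever $\beta,\gamma\in T_n^{(1)}$. Hence it is a subsemigroup of $T_n$ containing $D_n\bigcup\{\alpha^{\prime\prime}\}$, and so it contains the subsemigroup these elements generate. This is exactly the content of the preceding lemma on elements with $1\beta>1$, read contrapositively: any element of $<D_n \bigcup \{\alpha^{\prime\prime}\}>$ must satisfy $1\beta=1$.

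For the reverse inclusion, I would chain the two earlier lemmas. The lemma adapted from Lemma 6.3.2 of \cite{6} gives $T_n^{(1)}=<J^*_{n-1}>$. The subsequent lemma gives $J^*_{n-1}\subseteq <D_n \bigcup \{\alpha^{\prime\prime}\}>$. Since the right-hand side is a subsemigroup, it follows that
$$T_n^{(1)}=<J^*_{n-1}>\subseteq <D_n \bigcup \{\alpha^{\prime\prime}\}>.$$
Combining the two inclusions yields equality.

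The only point requiring care is the statement $T_n^{(1)}=<J^*_{n-1}>$. The first lemma as written only shows that each $\alpha\in J^*_r$ factors as $\epsilon\beta$ with $\epsilon\in E_{n-1}$ and $\beta\in J^*_{r+1}$. To conclude, I would run a downward induction on $r$ from $n-1$ to $1$, showing $J^*_r\subseteq<J^*_{n-1}>$ for every $r$. I would also note that $T_n^{(1)}$ contains the elements of rank $n$ fixing $1$, namely the permutations of $\{2,\dots,n\}$, and these are not of the form $J^*_r$ with $r\le n-1$. So either the corollary should be read for the singular part $T_n^{(1)}\setminus S_n$, or one must additionally check that the identity, and the permutations fixing $1$, lie in $<D_n\bigcup\{\alpha^{\prime\prime}\}>$.

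The identity lies in $D_n$, so it is not a problem. Non-identity permutations fixing $1$, however, cannot be products of maps containing a singular factor: $D_n$ contains only the identity permutation, and $\alpha^{\prime\prime}$ is singular, so any product involving $\alpha^{\prime\prime}$ or a non-identity element of $D_n$ is singular. This is the main obstacle. My first approach is to interpret $T_n^{(1)}$ throughout as $\{1_{X_n}\}\cup\bigcup_{r\le n-1}J^*_r$, consistent with how the preceding lemmas were set up. If the intended meaning is the full set of maps fixing $1$, the corollary would need to be restricted to this set accordingly.
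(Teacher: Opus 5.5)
Your argument is the same as the paper's: the corollary is obtained there by combining $\langle J^*_{n-1}\rangle=T_n^{(1)}$, the lemma $J^*_{n-1}\subseteq\langle D_n\cup\{\alpha''\}\rangle$ and the fact that every generator fixes $1$. Your objection is also correct: for $n\ge 3$ every generator other than $1_{X_n}$ is singular, so the $(n-1)!-1$ non-identity permutations fixing $1$ lie in $T_n^{(1)}$ but not in $\langle D_n\cup\{\alpha''\}\rangle$. Consequently the paper's claim $\langle J^*_{n-1}\rangle=T_n^{(1)}$ should read $\langle J^*_{n-1}\rangle=T_n^{(1)}\setminus S_n$, and the corollary holds only in your corrected form $\langle D_n\cup\{\alpha''\}\rangle=\{1_{X_n}\}\cup(T_n^{(1)}\setminus S_n)$. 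This corrected set is in fact what the paper uses later: its count $n^{n-1}-(n-1)!+1$ and its eight-element list for $n=3$, which omits the map interchanging $2$ and $3$, exclude exactly these permutations.

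One further soft spot, inherited from the paper rather than introduced by you: the paper's proof of $J^*_{n-1}\subseteq\langle D_n\cup\{\alpha''\}\rangle$ exhibits only the idempotents $E_{n-1}$, so one also needs $J^*_{n-1}\subseteq\langle E_{n-1}\rangle$. This is true, since reading the value $1$ as ``undefined'' gives an isomorphism $T_n^{(1)}\cong P_{n-1}$, and $P_{n-1}\setminus S_{n-1}$ is generated by its idempotents of defect one.
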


\vspace{0.5cm}
Now to prove Proposition \ref{ndn} , we observe that every elements of $Dom_{T_n}(D_n)$ belongs to $T_n^{(1)}$, then by Corollary \ref{nmn} we have that
$$Dom_{T_n}(D_n) \subseteq \ <D_n \bigcup \{{\alpha^{\prime\prime}}\}>$$

\vspace{0.5cm}
For the second inclusion, we want to prove that $\alpha\alpha^{\prime\prime}\alpha, \ \alpha\alpha^{\prime\prime},\ \alpha^{\prime\prime}\alpha$ and $\alpha^{\prime\prime}$ in $Dom_{T_n}(D_n)$.

\vspace{0.5cm}
For $\alpha\alpha^{\prime\prime}$ \ and \ $\alpha\alpha^{\prime\prime}\alpha$.\\ 
\vspace{0.1cm}
 If $x=1$, then $1\alpha\alpha^{\prime\prime}=1\alpha^{\prime\prime}=1$, \ and so \ $1\alpha\alpha^{\prime\prime}\alpha=1\alpha$.\\ 
\vspace{0.1cm}
If $x=2$, then $2\alpha\alpha^{\prime\prime}=1\alpha^{\prime\prime}=1$, \ and so \  $2\alpha\alpha^{\prime\prime}\alpha=1\alpha=2\alpha$.\\
\vspace{0.1cm}
If $x>2$, then $x\alpha\alpha^{\prime\prime}=(x-1)\alpha^{\prime\prime}=x$, \ and so \ $x\alpha\alpha^{\prime\prime}\alpha=x\alpha$.\\ Thus \
$x\alpha\alpha^{\prime\prime} \leq x$, so
$\alpha\alpha^{\prime\prime} \in D_n$ and $\alpha\alpha^{\prime\prime}\alpha=\alpha \in D_n$.

\vspace{0.5cm}
For $\alpha^{\prime\prime}\alpha$ \ and \ $\alpha^{\prime\prime}$.\\
\vspace{0.1cm}
\ If $x=1$, then $1\alpha^{\prime\prime}\alpha=1\alpha=1$, \ and so \ $1\alpha^{\prime\prime}\alpha\alpha^{\prime\prime}=1\alpha^{\prime\prime}$.\\
    \vspace{0.1cm}
\ If $x=n$, then $n\alpha^{\prime\prime}\alpha=n\alpha=n-1$, \ and so \ $n\alpha^{\prime\prime}\alpha\alpha^{\prime\prime}=(n-1)\alpha^{\prime\prime}=n=n\alpha^{\prime\prime}$.\\
\vspace{0.1cm}
\ If $1<x<n$, then $x\alpha^{\prime\prime}\alpha=(x+1)\alpha=x$, \ and so \ $x\alpha^{\prime\prime}\alpha\alpha^{\prime\prime}=x\alpha^{\prime\prime}$.\\ Thus \
$x\alpha^{\prime\prime}\alpha\ \leq x$, so
$\alpha^{\prime\prime}\alpha\in D_n$, and by Proposition \ref{vx}, $\alpha^{\prime\prime}=\alpha^{\prime\prime}\alpha\alpha^{\prime\prime} \in Dom_{T_n}(D_n)$. Therefore, 
$$<D_n \bigcup \{{\alpha^{\prime\prime}}\}> \ \subseteq Dom_{T_n}(D_n).$$

\vspace{0.5cm}
From the above, we deduce that $Dom_{T_n}(D_n) = \ <D_n \bigcup \{{\alpha^{\prime\prime}}\}> \ =T_n^{(1)}.$

\vspace{1cm}
Now we want to prove that $Dom_{T_{n}}(D_n)$ is regular. Let $\alpha \in T^{(1)}_n$ and define a mapping $\beta:X_n \rightarrow X_n$ as follows:
    
    \begin{equation*}
    x\beta= \begin{cases}
     min(x\alpha^{-1}) & \text{if} \ \ x\in Im(\alpha) \\
    z\in X_n  & \text{if} \ \ x\notin Im(\alpha)
    \end{cases}
\end{equation*}

\vspace{0.8cm}
 Then for all $x \in X_n,$ 
 $$x\alpha\beta\alpha=min((x\alpha)\alpha^{-1})\alpha=x\alpha,$$
 
 it follows that $\alpha\beta\alpha=\alpha$.
 Since $1 \in Im(\alpha)$ , then $1\beta=min(1\alpha^{-1})=1$, so $\beta \in T^{(1)}_n.$ Hence $T^{(1)}_n$ is regular.

\vspace{0.5cm}
In the next example, we will apply Proposition \ref{ndn} to find the dominion of $D_3$ in $T_3$.

\vspace{0.3cm}
\begin{example}
Consider the chain $X_3=\{1,2,3\}$. Let
$$\alpha=\begin{pmatrix}  1&2&3\\1&1&2
\end{pmatrix}\in D_3 \ \ \ \text{and} \ \ \ \alpha^{\prime\prime}=\begin{pmatrix}  1&2&3\\1&3&3
\end{pmatrix}\in T_3.$$\\[10pt]
Then $Dom_{T_3}(D_3)=<D_3 \bigcup \{{\alpha^{\prime\prime}}\}>=$  

\vspace{0.5cm}
$\Bigg\{
   \begin{pmatrix}  1&2&3\\1&2&3
\end{pmatrix}, 
\begin{pmatrix}  1&2&3\\1&2&2
\end{pmatrix},
\begin{pmatrix}  1&2&3\\1&2&1
\end{pmatrix},
\begin{pmatrix}  1&2&3\\1&1&2
\end{pmatrix},
\begin{pmatrix}  1&2&3\\1&1&3
\end{pmatrix},
\begin{pmatrix}  1&2&3\\1&1&1
\end{pmatrix},
\begin{pmatrix}  1&2&3\\1&3&3
\end{pmatrix},
\begin{pmatrix}  1&2&3\\1&3&1
\end{pmatrix}
\Bigg\}$.
\end{example}

\vspace{0.5cm}
\begin{lemma}  
$Dom_{T_n}(D_n)$ is idempotent-generated.
\end{lemma}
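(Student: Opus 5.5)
We show that every element of $Dom_{T_n}(D_n)=T_n^{(1)}$ is a product of idempotents lying in $T_n^{(1)}$. We use what has already been established: $Dom_{T_n}(D_n)=T_n^{(1)}$, and $T_n^{(1)}=\langle J^*_{n-1}\rangle$. By the lemma proved in the previous section, every $\alpha\in J^*_r$ with $r<n-1$ factors as $\alpha=\epsilon\beta$ with $\epsilon\in E_{n-1}$ and $\beta\in J^*_{r+1}$. Iterating this factorization writes every element of $J^*_r$ ($r\le n-1$) as a product of idempotents of $J^*_{n-1}$ followed by one element of $J^*_{n-1}$. The identity is itself idempotent. So the whole statement reduces to showing
$$J^*_{n-1}\subseteq \langle E_{n-1}\rangle .$$

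To prove this reduction, we identify $T_n^{(1)}$ with the full transformation semigroup on $X_n$ with $1$ fixed. Take $\alpha\in J^*_{n-1}$ with $1\alpha=1$. It has exactly one non-singleton block $\{a,b\}$ (with $a\alpha=b\alpha$) and exactly one missing image point $c\neq 1$.

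We first handle the case where $\alpha$ is \emph{not} a bijection on $\{2,\dots,n\}$ composed with a collapse onto $1$. Adapting Howie's argument for $T_n\setminus S_n$ (Howie, Lemma 6.3.x in \cite{6}), we write $\alpha$ as a product of the idempotents $f_{u,v}$ and $g_{r,s}$ of $J^*_{n-1}$, which were listed in the proof above, that is, maps collapsing one point onto another. The key observation is that such an $\alpha$ has the form $\epsilon\pi$, where $\epsilon$ is an idempotent collapsing $a$ onto $b$ and $\pi$ is a permutation fixing $1$. The factor $\pi$ must then be absorbed. The standard trick writes $\epsilon\pi$ as a product of idempotents by following the path $c\to\ldots$ of the "hole": for a transposition-like step one uses
$$\begin{pmatrix} x&y\\ y&x\end{pmatrix}\text{ restricted appropriately}=e_{x\to c}\,e_{y\to x}\,e_{c\to y},$$
where each $e_{p\to q}$ is the rank $n-1$ idempotent sending $p$ to $q$ and fixing everything else. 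Because the hole $c$ is never $1$, all these idempotents fix $1$, so they lie in $E_{n-1}$.

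Alternatively, we can avoid explicit constructions. The already-proved generation $J^*_{n-1}\subseteq\langle D_n\cup\{\alpha''\}\rangle$ went through the maps $f_{u,v}$, $g_{r,s}$ and $\alpha'_y$, and $D_n$ itself is idempotent-generated by Umar \cite{15}. So it suffices to show that $\alpha''$ is a product of idempotents in $T_n^{(1)}$. Since $\alpha''$ maps $x\mapsto x+1$ for $1<x<n$, fixes $1$ and $n$, and misses $2$, one checks directly that
$$\alpha''=e_{n-1\to n}\,e_{n-2\to n-1}\cdots e_{2\to 3},$$
where each $e_{k\to k+1}=g_{k,k+1}$ is an increasing idempotent of $J^*_{n-1}$. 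Applying these factors left to right moves each $x\in\{2,\dots,n-1\}$ up by exactly one and fixes $1$. This route is cleaner, and I would take it as the main proof: $D_n\subseteq\langle E(D_n)\rangle$ by \cite{15}, $\alpha''\in\langle E(T_n^{(1)})\rangle$ by the displayed factorization, and hence by Proposition \ref{ndn}, $Dom_{T_n}(D_n)=\langle D_n\cup\{\alpha''\}\rangle$ is generated by idempotents.

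The main obstacle is verifying the order of composition in the factorization of $\alpha''$, since maps act on the right. The product must start with $e_{n-1\to n}$ so that $n-1$ is sent to $n$ before the point $n-2$ is moved onto $n-1$, and so on down. I would check this carefully on $n=3,4$ before writing the general induction.
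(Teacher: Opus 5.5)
Your main route is exactly the paper's proof: $D_n=\langle E(D_n)\rangle$ by Umar's theorem, plus $\alpha''=e_{n-1\to n}\,e_{n-2\to n-1}\cdots e_{2\to 3}$ with $e_{k\to k+1}=g_{k,k+1}$, fed into $Dom_{T_n}(D_n)=\langle D_n\cup\{\alpha''\}\rangle$. The paper's $\epsilon_i$ are your $e_{i\to i+1}$, multiplied in the same order, and your check of that order is correct.

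Drop the opening reduction, though, rather than keeping it as an alternative.
\begin{itemize}
\item For $n\ge 3$, $T_n^{(1)}$ contains non-identity permutations fixing $1$, e.g.\ the transposition $(2\;3)$.
\item A product in $T_n$ has rank $n$ only if every factor is a permutation, and the only idempotent permutation is the identity. So these elements are not products of idempotents.
\item Hence ``$T_n^{(1)}=\langle J^*_{n-1}\rangle$'' and ``every element of $T_n^{(1)}$ is a product of idempotents'' are false. Your reduction to $J^*_{n-1}\subseteq\langle E_{n-1}\rangle$ only looks complete because it silently omits these $(n-1)!-1$ elements.
\end{itemize}
The error is inherited from the paper's assertion $Dom_{T_n}(D_n)=T_n^{(1)}$. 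That assertion contradicts the paper's own count $n^{n-1}-(n-1)!+1$ and its $n=3$ example, which lists $8$ of the $9$ elements of $T_3^{(1)}$. The dominion is really $(T_n^{(1)}\setminus S_n)\cup\{1_{X_n}\}$.

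Your second route is unaffected, since $\langle D_n\cup\{\alpha''\}\rangle$ contains no non-identity permutation. For the same reason, however, you need to justify $e_{k\to k+1}\in Dom_{T_n}(D_n)$; this is required so that the generating idempotents lie in the dominion itself. Get it from the paper's explicit generation of the $g_{r,s}$ by $D_n$ and $\alpha''$, not from ``$Dom_{T_n}(D_n)=T_n^{(1)}$''.
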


\begin{proof}
  As we known that $D_n$ is idempotent-generated \cite{15},
and 
\vspace{0.5cm}
$$\alpha^{\prime\prime}=
\begin{pmatrix}  
1&2&3&4&...&n-1&n\\1&3&4&5&...&n&n
\end{pmatrix} \notin D_n.$$

 We need to show that $\alpha^{\prime\prime}$ is a product of idempotents in $T_n^{(1)}.$ For all $x \in X_n$ we define $\epsilon_{i}: X_n \rightarrow X_n$, where $i=2,3,4,...,n-1$, by
 
$$ i\epsilon_{i}=i+1, \ \ \ \ \ \ \ \ \ x\epsilon_{i}=x \ \ (x \neq i).$$
Then $\epsilon_{i}$ are idempotents in $T_n^{(1)}$. Now let $\beta=\epsilon_{n-1}\epsilon_{n-2}...\epsilon_{3}\epsilon_{2}$, then

$$1\beta=1, \ \ \ \ \ \ \ \ \  \ \ \ x\beta=x+1 \ \ (2 \leq x \leq n-1), \ \ \ \ \ \ \ \ \ \ \ \ n\beta=n,$$
it is not difficult to see that

$$\alpha^{\prime\prime}=\epsilon_{n-1}\epsilon_{n-2}...\epsilon_{3}\epsilon_{2},$$
and so $Dom_{T_n}(D_n)=<D_n \bigcup \{{\alpha^{\prime\prime}}\}>$ is idempotent-generated.
\end{proof}

\vspace{0.5cm}
The following lemma gives a formula to find the number of elements of dominion $D_n$ in $T_n$.

\vspace{0.2cm}
\begin{lemma}  
$|Dom_{T_n}(D_n)|= n^{n-1}-(n-1)!+1$.
\end{lemma}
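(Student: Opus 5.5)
The count will come from identifying $Dom_{T_n}(D_n)$ precisely as a set and then counting it, rather than from any new combinatorics. By Proposition \ref{ndn}, $Dom_{T_n}(D_n)=<D_n\cup\{\alpha^{\prime\prime}\}>$, and every element of this semigroup fixes $1$. The plan is to show that
$$<D_n\cup\{\alpha^{\prime\prime}\}>=\bigl(T_n^{(1)}\setminus S_n\bigr)\cup\{1_{X_n}\},$$
where $1_{X_n}$ denotes the identity map. The statement $<D_n\cup\{\alpha^{\prime\prime}\}>=T_n^{(1)}$ in Corollary \ref{nmn} must be read this way. Indeed, the formula itself subtracts $(n-1)!$ and then adds back $1$, which signals that the non-identity permutations fixing $1$ are excluded.

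For the inclusion ``$\supseteq$'' I will use the earlier lemmas. $J^*_{n-1}\subseteq <D_n\cup\{\alpha^{\prime\prime}\}>$ was shown directly. The factorisation $\alpha=\epsilon\beta$ in the proof of the lemma on $J^*_r$ expresses each element of $J^*_r$ as an idempotent of $J^*_{n-1}$ times an element of $J^*_{r+1}$. A downward induction on $r$ from $n-1$ to $1$ therefore places every $J^*_r$ with $r\le n-1$ in the semigroup. That covers every singular map fixing $1$. The identity lies in $D_n$, so it is also included.

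For ``$\subseteq$'', the key observation is the rank inequality $|Im(\beta\gamma)|\le\min(|Im(\beta)|,|Im(\gamma)|)$. Every generator in $D_n\cup\{\alpha^{\prime\prime}\}$ other than the identity is singular. This holds because the only order-decreasing permutation is the identity (an easy induction from $1\alpha=1$ upward), and $\alpha^{\prime\prime}$ has rank $n-1$. Hence any product involving a non-identity generator is singular, so the only permutation in the generated semigroup is the identity. This step is the one I expect to need care, since it corrects the earlier corollary. Still, it is a one-line argument once the rank inequality is invoked.

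Finally I will count. A map fixing $1$ is determined by the images of $2,\dots,n$, so $|T_n^{(1)}|=n^{n-1}$. The permutations in $T_n^{(1)}$ are exactly those of $\{2,\dots,n\}$, so there are $(n-1)!$ of them. Removing these and adding back the identity gives $|Dom_{T_n}(D_n)|=n^{n-1}-(n-1)!+1$.
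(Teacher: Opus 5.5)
Your identification $\langle D_n\cup\{\alpha^{\prime\prime}\}\rangle=(T_n^{(1)}\setminus S_n)\cup\{1_{X_n}\}$ is right, and you are right that Corollary~\ref{nmn} cannot hold literally. Your final count is also the paper's count: $n^{n-1}$ maps fixing $1$, minus the $(n-1)!$ permutations fixing $1$, plus the identity. The gap is the step you take for free, namely $Dom_{T_n}(D_n)=\langle D_n\cup\{\alpha^{\prime\prime}\}\rangle$ by Proposition~\ref{ndn}. In the paper, the inclusion $Dom_{T_n}(D_n)\subseteq\langle D_n\cup\{\alpha^{\prime\prime}\}\rangle$ is derived only as $Dom_{T_n}(D_n)\subseteq T_n^{(1)}=\langle D_n\cup\{\alpha^{\prime\prime}\}\rangle$. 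That is exactly the literal form of Corollary~\ref{nmn} you reject, and the inclusion $Dom_{T_n}(D_n)\subseteq T_n^{(1)}$ is only asserted there, not proved. Once the corollary is corrected, nothing in the argument keeps the $(n-1)!-1$ non-identity permutations fixing $1$ out of the dominion. Yet excluding them is precisely what the terms $-(n-1)!+1$ encode. So the step that needs care is not the rank inequality but the upper bound $Dom_{T_n}(D_n)\subseteq(T_n^{(1)}\setminus S_n)\cup\{1_{X_n}\}$.

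Both parts of that bound are short. For permutations: $I=T_n\setminus S_n$ is an ideal, so the Rees quotient map $f\colon T_n\to T_n/I\cong S_n\cup\{0\}$ is a homomorphism. So is the map $g\colon T_n\to S_n\cup\{0\}$ sending every permutation to $1_{X_n}$ and every singular map to $0$. Since $D_n\cap S_n=\{1_{X_n}\}$, $f$ and $g$ agree on $D_n$, while $(\sigma)f=\sigma\neq 1_{X_n}=(\sigma)g$ for every permutation $\sigma\neq 1_{X_n}$. For fixing $1$: take $d\in Dom_{T_n}(D_n)\setminus D_n$ and a zigzag as in Theorem~\ref{ZT}. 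Every $u_j$ fixes $1$, so $u_{2m-1}y_m=u_{2m}$ gives $1y_m=1$, and each relation $u_{2i-1}y_i=u_{2i}y_{i+1}$ gives $1y_i=1y_{i+1}$. Therefore $1d=(1u_0)y_1=1y_1=1$. Combine this with $\alpha^{\prime\prime}=\alpha^{\prime\prime}\alpha\alpha^{\prime\prime}\in Dom_{T_n}(D_n)$ from Proposition~\ref{vx} and your generation argument. You get $Dom_{T_n}(D_n)=(T_n^{(1)}\setminus S_n)\cup\{1_{X_n}\}$ without Proposition~\ref{ndn}, and your count then goes through.

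A smaller point on the generation side: the paper's proof of $J^*_{n-1}\subseteq\langle D_n\cup\{\alpha^{\prime\prime}\}\rangle$ only produces the idempotents $f_{u,v}$ and $g_{r,s}$. The non-idempotent elements of $J^*_{n-1}$ still have to be written as products of these. This follows, for example, from $T_n^{(1)}\cong P_{n-1}$ (restrict $\alpha$ to $\{2,\dots,n\}\setminus 1\alpha^{-1}$) together with the known fact that $P_{n-1}\setminus S_{n-1}$ is generated by its idempotents of defect one.
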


\begin{proof}
    Let $\alpha \in T_n^{(1)}$ and defined $\bar{\alpha}$ by 
    $$x\bar{\alpha}=(x+1)\alpha \ \ \ (\forall x \in X_{n-1}, \ \bar{\alpha} \in T_{n-1} \setminus S_{n-1}).$$
    Since $1\alpha=1$ and every $x$ has $n$ degrees of freedom, we have $n^{n-1}$ possible maps, however we need to remove $(n-1)!$ permutations where $1\alpha=1$ but not the identity map.
\end{proof}

Here are the cardinalities of $Dom_{T_n}(D_n)$ for $n=1,2,...,8$

\vspace{0.5cm}
\begin{center}
    \begin{tabular}{|c|c|}
    \hline
    $n$  & $|Dom_{T_n}(D_n)|$ \\
    \hline
    1  & 1 \\
        \hline
       2  & 2 \\
       \hline
       3  & 8 \\
        \hline
       4  & 59 \\
        \hline
       5  & 602  \\
        \hline
       6  &  7657\\
        \hline
       7  &  116930\\
        \hline
        8 & 2092113\\
        \hline
    \end{tabular}
\end{center}
\vspace{0.5cm}
The sequence: $1,2,8,59,602,7657,...$ is not yet recorded in \cite{seq}.

\vspace{0.9cm}
Although the following formula is known and can be found in \cite{seq}, we will present it in a separate lemma and prove it.
\begin{lemma}  
$|E(Dom_{T_n}(D_n))|=\sum_{k=1}^{n}k^{(n-k)}$
$\begin{pmatrix} n-1 \\ k-1 \end{pmatrix}.$
\end{lemma}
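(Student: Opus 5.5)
We use the identification established above, $Dom_{T_n}(D_n)=T_n^{(1)}=\{\alpha\in T_n: 1\alpha=1\}$, and count the idempotents of $T_n^{(1)}$ directly. The approach is to classify idempotents by their image. An element $\varepsilon\in T_n$ is idempotent exactly when it fixes every point of $Im(\varepsilon)$, which is the stationary-block characterisation recalled in the introduction. So an idempotent is determined by two pieces of data: its image $A\subseteq X_n$, and an arbitrary assignment of each $x\notin A$ to some element of $A$. For $\varepsilon\in T_n^{(1)}$ we have in addition $1\in A$, since $1=1\varepsilon\in Im(\varepsilon)$. Conversely, any such data with $1\in A$ gives an idempotent fixing $1$.

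The key steps, in order, are as follows.

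\textbf{Step 1.} Check the bijection just described. If $1\in A$ and $\varepsilon|_A=\mathrm{id}_A$ with $X_n\varepsilon=A$, then $\varepsilon^2=\varepsilon$ and $1\varepsilon=1$. Conversely, every $\varepsilon\in E(T_n^{(1)})$ arises this way from $A=Im(\varepsilon)$.

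\textbf{Step 2.} Fix $k=|A|$ with $1\le k\le n$. Since $1\in A$ is forced, the number of choices for $A$ is $\binom{n-1}{k-1}$.

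\textbf{Step 3.} Given $A$, each of the $n-k$ points outside $A$ may be sent to any of the $k$ points of $A$, independently of the others. This gives $k^{\,n-k}$ idempotents with image exactly $A$. There is no overcounting, because the image is exactly $A$: every point of $A$ is hit by itself.

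\textbf{Step 4.} Summing over $k$ gives
$$|E(Dom_{T_n}(D_n))|=\sum_{k=1}^{n}\binom{n-1}{k-1}k^{\,n-k}.$$

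There is no serious obstacle here. The only point needing care is to insist that $1$ lies in the image, which is what replaces the usual $\binom{n}{k}$ in the count of idempotents of $T_n$ by $\binom{n-1}{k-1}$. One should also note that the idempotents of the subsemigroup $T_n^{(1)}$ are exactly the idempotents of $T_n$ that fix $1$. As a sanity check, for $n=3$ the formula gives $1+4+1=6$, which matches the idempotents in the example for $D_3$: five of its eight elements are idempotent, and the sixth is $\begin{pmatrix}1&2&3\\1&3&3\end{pmatrix}$, since $3\mapsto3$ makes it idempotent.
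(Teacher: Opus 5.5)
Your proof is correct and is the paper's argument: since $1$ is fixed, choose the other $k-1$ image points in $\binom{n-1}{k-1}$ ways, send the remaining $n-k$ points into the image in $k^{n-k}$ ways, and sum over $k$. Strictly, $Dom_{T_n}(D_n)$ is $(T_n^{(1)}\setminus S_n)\cup\{\mathrm{id}\}$ rather than all of $T_n^{(1)}$ (non-identity permutations fixing $1$ are not dominated, consistent with the paper's count $n^{n-1}-(n-1)!+1$), but $\mathrm{id}$ is the only idempotent permutation, so your count is unaffected; also, in your $n=3$ check six of the eight listed elements are idempotent, not five.
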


\begin{proof}
  Since $1\alpha=1$ is a fixed point, we choose the remaining $k-1$ fixed points from $X_{n-1}$ in $\begin{pmatrix} n-1 \\ k-1 \end{pmatrix}$ ways.\\
  Next, we see that the remaining $n-k$ elements can be mapped to any of the $k$ chosen fixed points in $k^{(n-k)}$ ways.\\
  Multiplying these numbers we get $k^{(n-k)} \begin{pmatrix} n-1 \\ k-1 \end{pmatrix}$, the number of idempotents of rank $k$. Finally taking the sum over $k$ from $1$ to $n$ yields the required result.\\
\end{proof}

Here are the cardinalities of $E(Dom_{T_n}(D_n))$ for $n=1,2,...,8$

\vspace{0.5cm}
\begin{center}
    \begin{tabular}{|c|c|}
    \hline
    $n$  & $|E(Dom_{T_n}(D_n))|$ \\
    \hline
    1  & 1 \\
        \hline
       2  & 2 \\
       \hline
       3  & 6 \\
        \hline
       4  & 23 \\
        \hline
       5  & 104  \\
        \hline
       6  &  537\\
        \hline
       7  &  3100\\
        \hline
        8 & 19693\\
        \hline
    \end{tabular}
\end{center}
\vspace{0.5cm}
The sequence: $1,2,6,23,104,537,...$ is in \cite{seq}.

\section{Dominion of $O_n$ in $T_n$}
\vspace{0.6cm}
In this section, we characterize the smallest regular semigroup containing $O_n$ to conclude that $O_n$ is closed in $T_n$. Before that, we need to prove some results.

\vspace{0.5cm}
\begin{lemma}
     Let $\alpha \in O_n$ and $\beta \in T_n$ such that $\alpha \beta \alpha=\alpha$. Then $\alpha \beta \in O_n$. 
    \end{lemma}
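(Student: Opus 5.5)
The plan is a direct pointwise argument using only the identity $\alpha\beta\alpha=\alpha$ and the monotonicity of $\alpha$. Throughout, maps are written on the right, so $x\alpha\beta=(x\alpha)\beta$. The central observation is that $\alpha\beta$ sends each point into the same block of $\alpha$ that the point itself lies in: for every $x\in X_n$,
$$(x\alpha\beta)\alpha = x(\alpha\beta\alpha)=x\alpha .$$
I would record this first as the key identity.

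Next I would fix $x,y\in X_n$ with $x\le y$ and aim to show $x\alpha\beta\le y\alpha\beta$. Since $\alpha\in O_n$ we have $x\alpha\le y\alpha$, and I would split into two cases.

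\emph{Case $x\alpha=y\alpha$.} Then $x\alpha\beta=(x\alpha)\beta=(y\alpha)\beta=y\alpha\beta$, and the inequality holds with equality.

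\emph{Case $x\alpha<y\alpha$.} Put $u=x\alpha\beta$ and $v=y\alpha\beta$. By the key identity, $u\alpha=x\alpha<y\alpha=v\alpha$. Suppose $u\ge v$. Because $\alpha$ is order-preserving, this would force $u\alpha\ge v\alpha$, a contradiction. Hence $u<v$.

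In both cases $x\alpha\beta\le y\alpha\beta$, so $\alpha\beta\in O_n$. The only step with any content is the contrapositive use of monotonicity in the second case, namely that $u\alpha<v\alpha$ forces $u<v$ for an order-preserving $\alpha$. This is immediate, so I do not expect a genuine obstacle; the main care needed is keeping the right-action composition convention straight.
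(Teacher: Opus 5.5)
Your proof is correct and is essentially the paper's argument. The paper also uses $\alpha\beta\alpha=\alpha$ to get $a\beta\in a\alpha^{-1}$ for every $a\in Im(\alpha)$, which is your key identity. It then uses the fact that the blocks of an order-preserving $\alpha$ are ordered ($A_i\le A_j$ when $a_i\le a_j$), which is what your contrapositive-monotonicity step proves, concluding that $\beta|_{Im(\alpha)}$ is order-preserving and hence $\alpha\beta\in O_n$.
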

    
     \begin{proof}
Let
$$\alpha=
\begin{pmatrix} A_1&A_2&...&A_r\\a_1&a_2&...&a_r
\end{pmatrix} \in O_n ,$$
where $A_i=a_i\alpha^{-1}$ $(1\le i \le r)$. Let $a_i \le a_j$. Since $\alpha \beta \alpha=\alpha$, then 
$$a_i\beta \in a_i \alpha^{-1}=A_i \le A_j=a_j\alpha^{-1} \ \ \ \text{and} \ \ \  a_j\beta \in a_j\alpha^{-1}.$$
So $a_i\beta \le a_j\beta$.
Thus $\beta|_{Im(\alpha)}$ is order preserving and so $\alpha\beta \in O_n.$ 
\end{proof}

\vspace{0.5cm}
\begin{proposition}
Let $\alpha \in O_n$ and $\beta \in T_n$ such that $\alpha\beta, \ \beta\alpha \in O_n$, then $\beta\alpha\beta$ in $O_n$. 
\end{proposition}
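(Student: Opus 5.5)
The plan is to prove the statement directly from the definitions, by a pointwise comparison. I will write $\beta\alpha\beta=(\beta\alpha)\beta$ when using the hypothesis on $\beta\alpha$, and $\beta\alpha\beta=\beta(\alpha\beta)$ when using the hypothesis on $\alpha\beta$. Maps act on the right, as in the rest of the paper.

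Fix $x,y\in X_n$ with $x\le y$, and put $u=x\beta$, $v=y\beta$. The goal is
$$x\beta\alpha\beta=u\alpha\beta\le v\alpha\beta=y\beta\alpha\beta .$$
Since $\beta\alpha\in O_n$ and $x\le y$, we get $u\alpha=x\beta\alpha\le y\beta\alpha=v\alpha$. The difficulty is that we know nothing about how $u$ and $v$ compare, because $\beta$ itself need not be order-preserving. So we cannot apply $\alpha\beta\in O_n$ to the pair $(u,v)$ right away. The key step is to recover a comparison between $u$ and $v$ from the comparison $u\alpha\le v\alpha$, using that $\alpha\in O_n$.

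I split into two cases.

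\emph{Case $u\alpha=v\alpha$.} Then $u\alpha\beta=(u\alpha)\beta=(v\alpha)\beta=v\alpha\beta$, and the required inequality holds with equality.

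\emph{Case $u\alpha<v\alpha$.} Then $u<v$, because $v\le u$ together with $\alpha\in O_n$ would force $v\alpha\le u\alpha$. Now apply $\alpha\beta\in O_n$ to $u<v$, which gives $u\alpha\beta\le v\alpha\beta$.

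In both cases $x\beta\alpha\beta\le y\beta\alpha\beta$, so $\beta\alpha\beta\in O_n$.

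The only point that needs any thought is the contrapositive use of monotonicity of $\alpha$: a strict inequality between images forces the same strict order between the arguments. Everything else is routine. I do not expect to need the preceding lemma (that $\alpha\beta\alpha=\alpha$ implies $\alpha\beta\in O_n$). It will matter only when this proposition is combined with a dominion argument like Proposition \ref{vx} to show that $O_n$ is closed in $T_n$.
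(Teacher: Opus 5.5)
Your proof is correct and is essentially the paper's argument. The paper splits on whether $x\beta\le y\beta$ or $x\beta\ge y\beta$: it uses $\alpha\beta\in O_n$ in the first case, and in the second uses $\alpha,\beta\alpha\in O_n$ to force $x\beta\alpha=y\beta\alpha$. You split instead on whether $x\beta\alpha=y\beta\alpha$ or $x\beta\alpha<y\beta\alpha$, which is the same use of the three monotonicity hypotheses, arranged contrapositively.
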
  

\begin{proof}
Let $x,y \in X_n$ and $x\leq y$. Then we want to show that $x\beta\alpha\beta \leq y\beta\alpha\beta$. If $x\beta \leq y\beta$, since $\alpha\beta \in O_n$ then
$$(x\beta)\alpha\beta \leq (y\beta)\alpha\beta.$$
If $x\beta \geq y\beta$, since $\alpha \in O_n$, we see that $x\beta\alpha \geq y\beta\alpha.$
However, since $\beta\alpha \in O_n$, then $x\beta\alpha \leq y\beta\alpha.$
thus $x\beta\alpha=y\beta\alpha.$
Hence $$x\beta\alpha\beta=y\beta\alpha\beta.$$
\end{proof}

\vspace{0.2cm}
The following result describes the dominion of a finite regular semigroup.
\begin{theorem}(\cite{196}, Theorem 5)
 Let $U$ be any finite regular semigroup and let $S$ be any finite semigroup containing $U$ as a proper subsemigroup. Then the dominion of $U$ in $S$ is strictly contained in $S$.
\end{theorem}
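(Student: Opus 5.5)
The plan is to argue by contradiction. Suppose $U\subsetneq S$ with both finite, $U$ regular, and $Dom_S(U)=S$. I would then build a semigroup $T$ and two homomorphisms $f,g:S\to T$ that agree on $U$ but differ at some element of $S\setminus U$.

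\textbf{Choosing the element.} Finiteness lets me pick $s\in S\setminus U$ whose $\mathcal{J}$-class $J_s$ (computed in $S$) is maximal, in the $\mathcal{J}$-order of $S$, among the $\mathcal{J}$-classes that meet $S\setminus U$. By this maximality, every element of $S$ strictly $\mathcal{J}$-above $s$ lies in $U$.

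\textbf{Passing to the principal factor.} Let $N=S^1sS^1\setminus J_s$, which is an ideal of $S$ (or is empty). The Rees quotient $S/N$ is a homomorphic image of $S$, and the image of $S^1sS^1$ in it is the principal factor $J_s^0$.

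\textbf{Constructing $f$ and $g$.} I would let $S$ act on the right of the set $X=J_s\cup\{0\}$ by $x\cdot t=xt$ when $xt\in J_s$ and $x\cdot t=0$ otherwise. This gives a homomorphism $\rho:S\to T_X$, a Schützenberger-type right representation.

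Next, for each $\mathcal{L}$-class $L\subseteq J_s$, let $\theta$ be the permutation of $X$ that swaps the $\mathcal{R}$-class of $s$ with a suitable copy in a doubled set. More precisely, I would take $X\times\{1,2\}$ and glue the two copies along every $\mathcal{R}$-class that contains an element of $U$. I would then define $f$ and $g$ as the two actions on the doubled set that coincide on the glued parts. Regularity of $U$ is what guarantees that $J_s\cap U$, if nonempty, is a union of full groups and idempotents, so that $U$ acts compatibly on both copies.

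Concretely, for $u\in U$ I need $f(u)=g(u)$. If $u$ is strictly above $J_s$, it acts on $J_s$ by left or right translation inside $U\cup J_s$. If $u\in J_s\cap U$, then $u=uu'u$ with $uu',u'u\in E(U)$, so $u$ maps every $\mathcal{R}$-class into the $\mathcal{R}$-class of $uu'\in U$, which is glued. Hence $u$ acts identically on both copies. If $u$ lies below $J_s$, it sends everything to $0$.

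By contrast, $s$ itself sits in an unglued $\mathcal{R}$-class, since that class contains no element of $U$. So $f(s)$ and $g(s)$ send some point to different copies, giving $f(s)\neq g(s)$. This contradicts $s\in Dom_S(U)$.

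\textbf{Main obstacle.} The hard step is the case where the $\mathcal{R}$-class (or $\mathcal{L}$-class) of $s$ in $S$ already contains elements of $U$. In that case gluing by $\mathcal{R}$-classes does not separate $s$. I would first try to use the dual left action and glue by $\mathcal{L}$-classes instead. If both the $\mathcal{R}$- and $\mathcal{L}$-classes of $s$ meet $U$, then regularity of $U$ gives idempotents $e,f\in U$ with $s\,\mathcal{R}\,e$ and $s\,\mathcal{L}\,f$, so $s=es=sf$. In that situation I would apply Isbell's zigzag theorem to $s\in Dom_S(U)$, together with the $\mathcal{H}$-structure of $J_s$, to force $s\in eSf\cap J_s$ to be the unique element of its $\mathcal{H}$-class $H_{e,f}$ that is compatible with $U$. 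That would place $s$ in $U$, giving the contradiction.

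If this $\mathcal{H}$-class argument fails, the fallback is to perform the doubling at the level of the group $\mathcal{H}$-classes. I would take two different homomorphisms of the Schützenberger group that agree on its subgroup generated by the traces of $U$. This needs that subgroup to be proper, which is where finiteness enters: finite groups are absolutely closed, so their dominions are trivial.
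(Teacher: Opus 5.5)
The paper does not prove this theorem; it quotes it from \cite{196}. Your proposal therefore has to stand on its own, and as written it has a genuine gap.

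\textbf{The separating homomorphisms are never constructed.} The whole argument rests on a pair of homomorphisms $f,g$ that agree on $U$ but differ at $s$, and this pair is never defined. The sentence about $\theta$ is not a construction. The only action you actually describe on the glued double of $J_s\cup\{0\}$ is the diagonal one. Because $S$ is finite, $x\in J_s$ and $xt\in J_s$ force $xt\,\mathcal{R}\,x$, so this action preserves every $\mathcal{R}$-class of $J_s$. It therefore commutes with the involution exchanging the two copies of each unglued class, and conjugating by that involution (the obvious way to get a second action) returns the same homomorphism. A genuine twist would have to be a cocycle-type modification whose multiplicativity and agreement on $U$ are verified, and that verification is absent.

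The checks you do give are wrong or empty:
\begin{itemize}
\item Under your right action, $u\in J_s\cap U$ sends $x$ to $xu\in R_x\cap L_u$ (when nonzero). So it keeps each $\mathcal{R}$-class rather than moving everything into $R_{uu'}$; that is what left multiplication does.
\item For $u$ strictly above $J_s$, nothing is checked.
\item Regularity of $U$ does not make $J_s\cap U$ a union of groups and idempotents. In $S=B(\mathbb{Z}_2,2)$, the regular subsemigroup $U=\{(i,1,j):i,j\in\{1,2\}\}\cup\{0\}$ meets the top $\mathcal{J}$-class in elements such as $(1,1,2)$, which lie in no subgroup.
\item ``$s$ lies in an unglued $\mathcal{R}$-class'' says nothing about how $s$ acts. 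For $x\in J_s$ one has $x\cdot s\neq 0$ exactly when $L_x\cap R_s$ contains an idempotent.
\end{itemize}

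\textbf{The machinery cannot separate $s$ in two cases your plan must cover.}

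First, suppose $J_s$ is not regular. The Miller--Clifford criterion then gives $xs,\,sx\notin J_s$ for all $x\in J_s$, so $s$ acts on $J_s\cup\{0\}$, from either side, as the constant map to $0$. For example, take $S=\{1,s,0\}$ with $s^2=0$ and $U=\{1,0\}$. Here $s$ acts exactly as $0\in U$ does. Yet $s\notin Dom_S(U)$, since the endomorphism $1\mapsto 1$, $s,0\mapsto 0$ agrees with the identity on $U$.

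Second, the case you call the main obstacle really occurs, and you leave it open. Take $S=B_2=\{e_{11},e_{12},e_{21},e_{22},0\}$, $U=\{e_{11},e_{22},0\}$ and $s=e_{12}$. Every $\mathcal{R}$- and $\mathcal{L}$-class of $J_s$ meets $U$, so your gluing identifies everything. All Sch\"utzenberger groups are trivial, so your fallback has nothing to act on. Yet $s\notin Dom_S(U)$: write $\mathbb{Z}_2=\{1,a\}$, $c_1=a$, $c_2=1$. The two embeddings $e_{ij}\mapsto(i,1,j)$ and $e_{ij}\mapsto(i,c_ic_j^{-1},j)$ of $B_2$ into $B(\mathbb{Z}_2,2)$ agree on $U$ and differ at $e_{12}$. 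The separating twist is a group-valued relabelling of $\mathcal{R}$- and $\mathcal{L}$-indices with values outside $S$. Neither gluing nor Sch\"utzenberger-group homomorphisms produce it, and the zigzag step ``forcing $s$ into $U$'' has no argument behind it.

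Two smaller points. All groups, finite or not, are absolutely closed, so that remark does not locate where finiteness of $S$ is used. Also, you only use $s\in Dom_S(U)$ for a single maximal $s$, never the global hypothesis $Dom_S(U)=S$. That hypothesis passes to every Rees quotient $S/K$, with the image of $U$ still regular, and is the natural lever for an induction on $|S|$. Even then, a real separation argument in the remaining ideal would still have to be supplied.
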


\vspace{0.5cm}
Since $O_n$ is a finite regular subsemigroup of $T_n$, then  
the dominion of $O_n$ in $T_n$ is not all of $T_n$.
\begin{lemma}
    The semigroup $O_n$ is saturated, $Dom_{T_n}(O_n) \neq T_n$.
\end{lemma}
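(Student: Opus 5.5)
The plan is to apply the theorem just quoted (\cite{196}, Theorem 5) directly, with $U=O_n$ and $S=T_n$. Its hypotheses come down to three checks.

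First, $O_n$ is finite and $T_n$ is finite, since $|T_n|=n^n$. Second, $O_n$ is regular, which is Howie's result \cite{pod} recalled in the introduction. For completeness this can also be seen directly. Given $\alpha\in O_n$, define $\beta$ by $x\beta=\min(x\alpha^{-1})$ for $x\in Im(\alpha)$. For $x\notin Im(\alpha)$, let $x\beta=\min(y\alpha^{-1})$, where $y$ is the largest element of $Im(\alpha)$ below $x$. If no such $y$ exists, use the smallest element of $Im(\alpha)$ instead. Because the blocks of an order-preserving map are intervals ordered like their images, this $\beta$ is order-preserving, and $\alpha\beta\alpha=\alpha$ holds exactly as in the regularity argument for $T^{(1)}_n$ above.

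Third, $O_n$ must be a \emph{proper} subsemigroup of $T_n$. This is where $n\ge 2$ is needed. Any non-identity permutation, for instance the transposition exchanging $1$ and $2$, lies in $T_n\setminus O_n$, since $1<2$ but $1\alpha=2>1=2\alpha$. So $O_n\subsetneq T_n$. For $n=1$ the claim fails trivially, because $O_1=T_1$, so I would state the restriction $n\ge2$ explicitly.

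With these three facts, Theorem 5 of \cite{196} gives $Dom_{T_n}(O_n)\subsetneq T_n$. By the definition recalled in the introduction, this is precisely the statement that $O_n$ is saturated.

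The only step with real content is confirming that $O_n$ is regular, and that is already available from \cite{pod}. I therefore expect no serious obstacle beyond handling the degenerate case $n=1$.
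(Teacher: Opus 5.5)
Your proposal is correct and matches the paper's argument, which likewise applies Hall's Theorem 5 directly on the grounds that $O_n$ is a finite regular subsemigroup of the finite semigroup $T_n$. You also do two things the paper omits: you check properness explicitly, which is where $n\ge 2$ is needed since $O_1=T_1$, and your self-contained regularity argument for $O_n$ is valid.
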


\vspace{0.4cm}
A map $\alpha \in T_n$ is said to be  order-reversing if $(\forall \ x,y \in X_n)$ $x \leq y$ implies $x\alpha \geq y\alpha$. The set
$$OR_n=\{\alpha \in T_n: (\forall \ x,y\in X_n) \ x \leq y \Rightarrow x\alpha\geq y\alpha\},$$  
is the subsemigroups of $T_n$ consisting of all  order-reversing maps of $X_n$ \cite{cla}. 

\vspace{0.2cm}
Let
$$\beta_{n}=\begin{pmatrix}  1&2&3&...&n-1&n\\2&1&1&...&1&1
\end{pmatrix} \in OR_n,$$

\newpage
in the next result we characterize the subsemigroup $<O_n \bigcup \{\beta_n\}>$ of $T_n$.
\begin{lemma}
    $<O_n \bigcup \{\beta_n\}>=O_n \cup \{\alpha \in OR_n: |Im(\alpha)|=2\}$.
\end{lemma}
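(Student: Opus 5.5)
The plan is to prove the two inclusions separately, writing $W=O_n\cup\{\alpha\in OR_n:|Im(\alpha)|=2\}$. We assume $n\ge 2$, so that $\beta_n$ makes sense. Note first that an order-reversing map of rank $1$ is a constant map, and constant maps lie in $O_n$. Hence $W$ could equivalently be described as $O_n\cup\{\alpha\in OR_n: |Im(\alpha)|\le 2\}$.

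\emph{Step 1: $<O_n\cup\{\beta_n\}>\subseteq W$.} Clearly $O_n\subseteq W$. Also $\beta_n\in W$, since $\beta_n$ is order-reversing with image $\{1,2\}$. It therefore suffices to show that $W$ is closed under composition, and we check the four types of products.
\begin{itemize}
\item A product of two order-preserving maps is order-preserving.
\item A product of two order-reversing maps is order-preserving: $x\le y$ gives $x\alpha\ge y\alpha$, hence $x\alpha\gamma\le y\alpha\gamma$.
\item A product of an order-preserving map and an order-reversing map, in either order, is order-reversing. Its rank is at most $2$, because one factor has rank $2$ and $|Im(\alpha\gamma)|\le\min(|Im(\alpha)|,|Im(\gamma)|)$.
\end{itemize}
By the remark above, such a product lies in $O_n$ if its rank is $1$, and in $W\setminus O_n$ if its rank is $2$. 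Hence $W$ is a subsemigroup, and the inclusion follows.

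\emph{Step 2: $W\subseteq <O_n\cup\{\beta_n\}>$.} It suffices to generate an arbitrary order-reversing $\gamma$ of rank $2$. Such a $\gamma$ has the form
$$\gamma=\begin{pmatrix}\{1,\dots,k\}&\{k+1,\dots,n\}\\ b&a\end{pmatrix},\qquad 1\le k<n,\ a<b,$$
because its kernel classes are convex and it reverses order. I will factor it as $\gamma=\varepsilon\beta_n\delta$ with $\varepsilon,\delta\in O_n$, defined as follows.
\begin{itemize}
\item $\varepsilon$ sends $\{1,\dots,k\}$ to $1$ and $\{k+1,\dots,n\}$ to $2$. This map is order-preserving.
\item Then $\varepsilon\beta_n$ sends $\{1,\dots,k\}$ to $2$ and $\{k+1,\dots,n\}$ to $1$.
\item $\delta$ is given by $1\delta=a$ and $x\delta=b$ for $x\ge 2$. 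This map is order-preserving because $a<b$.
\end{itemize}
A direct check then gives $\varepsilon\beta_n\delta=\gamma$.

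The only delicate point is the bookkeeping in Step 1: one must notice that rank-one products of mixed type are constant, and therefore already lie in $O_n$, so that no order-reversing map of rank $1$ needs to be added separately. The argument uses no earlier result of the paper beyond the definitions. I expect no real obstacle; the key observations are the convexity of the kernel classes of a rank-two order-reversing map and the explicit factorisation through $\beta_n$.
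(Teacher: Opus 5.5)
Your proposal is correct, and Step 2 is exactly the paper's argument: the paper writes the same factorization in two stages, first $\varepsilon\beta_n=\beta_{n,i}$ and then right multiplication by your $\delta$. Your Step 1, which shows that $W$ is closed under composition, supplies the reverse inclusion $<O_n\cup\{\beta_n\}>\subseteq W$ that the paper's proof omits entirely. So your version is the more complete of the two.
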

\begin{proof}
  We want to prove that every element of $OR_n$ with image size $2$ can be generated by elements of $O_n$ and $\beta_n$.\\[10pt]
  Let 
  $$\beta_{n,i}=\begin{pmatrix}  \{1,2,...i\}&...&\{i+1,...,n\}\\2&...&1
  \end{pmatrix} \in OR_n \ \ \text{and} \ \ |Im(\beta_{n,i})|=2.$$\\[10pt]
  Then 
  $$\begin{pmatrix}  \{1,2,...i\}&...&\{i+1,...,n\}\\1&...&2
  \end{pmatrix}\beta_n=\beta_{n,i}  \ \ \ \text{for all} \ \ i,$$\\[10pt]
  and  
  $$\beta_{n,i}\begin{pmatrix}  1&2&3&...&n\\x&y&y&...&y
  \end{pmatrix}=\begin{pmatrix}  \{1,2,...i\}&...&\{i+1,...,n\}\\y&...&x
  \end{pmatrix} \in OR_n \ \ \  \text{where} \ \ \  1\leq x<y\leq n.$$\\
\end{proof}

Now we want to show that any subsemigroup of $T_n$ that contains $O_n$ must contain $<O_n \bigcup \{\beta_n\}>$.

\vspace{0.5cm}
\begin{lemma}
$<O_n \bigcup \{\beta_n\}>$ is the smallest regular subsemigroup of $T_n$ containing $O_n$.
\end{lemma}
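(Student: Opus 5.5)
Since $O_n$ is itself regular, the statement can only be meant relative to subsemigroups that properly extend $O_n$. I will therefore read it, in line with the sentence preceding it, as follows. First, $\langle O_n\cup\{\beta_n\}\rangle$ is a regular subsemigroup of $T_n$. Second, every subsemigroup $S$ of $T_n$ with $O_n\subsetneq S$ already contains $\langle O_n\cup\{\beta_n\}\rangle$. Together these make it the least regular extension of $O_n$ beyond $O_n$ itself. Throughout I use the previous lemma, which gives $\langle O_n\cup\{\beta_n\}\rangle=O_n\cup\{\alpha\in OR_n:|Im(\alpha)|=2\}$.

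\emph{Regularity.} Elements of $O_n$ are regular inside $O_n$ (Howie), so only order-reversing maps of rank $2$ need attention. Take such a map $\gamma=\begin{pmatrix}\{1,\dots,i\}&\{i+1,\dots,n\}\\ y&x\end{pmatrix}$ with $x<y$. Let $\delta$ be any map with $y\delta=1$ and $x\delta=n$ that is order-reversing of rank $2$, for instance $z\delta=1$ for $z\ge y$ and $z\delta=n$ for $z<y$. Then $\delta$ lies in the set, and one checks directly that $\gamma\delta\gamma=\gamma$, since $1\in\{1,\dots,i\}$ and $n\in\{i+1,\dots,n\}$.

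\emph{Minimality.} Let $S$ be a subsemigroup of $T_n$ containing $O_n$ together with some $\alpha\notin O_n$. Because $\alpha$ is not order-preserving, there are $x<y$ with $a=x\alpha>y\alpha=b$. Set $\lambda=\begin{pmatrix}\{1\}&\{2,\dots,n\}\\ x&y\end{pmatrix}$, which lies in $O_n$ since $x<y$. Set $\mu\in O_n$ to send every $z\le b$ to $1$ and every $z>b$ to $2$; this is order-preserving and satisfies $b\mu=1$, $a\mu=2$. Then
\[
1\lambda\alpha\mu=a\mu=2,\qquad z\lambda\alpha\mu=b\mu=1\quad (z\ge 2),
\]
so $\lambda\alpha\mu=\beta_n$. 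Hence $\beta_n\in S$, and therefore $\langle O_n\cup\{\beta_n\}\rangle\subseteq S$. If $S$ is regular and properly contains $O_n$, this is exactly the required minimality.

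The main obstacle is conceptual rather than computational: the claim must be read as minimality among proper regular extensions of $O_n$, since otherwise it fails trivially, and I will state this reading explicitly. The only care needed in the computation is choosing $\mu$ so that it is genuinely order-preserving, which is why the cut is made at $b$.
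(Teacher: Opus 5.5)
Your proof is correct. It follows the same two-part plan as the paper: first produce $\beta_n$ from any element lying outside, then exhibit inverses for the rank-$2$ order-reversing maps. Your minimality step, however, is genuinely more general than the paper's.

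\textbf{Minimality.} The paper only checks two particular shapes of $\alpha$, namely those with image sequences $(2,1,2,c,\dots,c)$ and $(c,d,c,e,\dots,e)$ with $d<c$. It writes $\beta_n$ as a product of $\alpha$ with specific order-preserving maps, and then draws the conclusion for every $\alpha$. That does not cover an arbitrary $\alpha$.

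Your sandwich $\lambda\alpha\mu=\beta_n$, built from an arbitrary inversion pair $x<y$ with $x\alpha>y\alpha$, is the uniform version of those computations. The paper's left factor ($1\mapsto 1$, $z\mapsto 2$ for $z\ge 2$) is your $\lambda$ with $(x,y)=(1,2)$. Your argument applies to every $\alpha\notin O_n$. This includes the rank-$2$ order-reversing maps, which the paper's hypothesis $\alpha\notin\langle O_n\cup\{\beta_n\}\rangle$ excludes, and which must be handled to show that every $S\supsetneq O_n$ contains $\beta_n$.

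\textbf{Reading of the statement.} Your explicit reading of ``smallest'' as ``smallest among subsemigroups properly containing $O_n$'' is the only sensible one. The paper's phrase ``the intersection of all subsemigroups of $T_n$ which contain $O_n$'' would literally be $O_n$.

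\textbf{Regularity.} Your inverse $\delta$ (cut at $y$, sending $y\mapsto 1$ and $x\mapsto n$) differs from the paper's $\beta$ (cut at $x$, with values $i+1$ and $i$), but the idea is the same.

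\textbf{One small point.} Your regularity step, like the paper's, needs the inclusion $\langle O_n\cup\{\beta_n\}\rangle\subseteq O_n\cup\{\alpha\in OR_n:|Im(\alpha)|=2\}$. The paper's proof of the previous lemma never verifies this inclusion; it only shows the reverse one. It follows at once because that set is closed under composition:
\begin{itemize}
\item a product of an element of $O_n$ with a rank-$2$ order-reversing map, in either order, is order-reversing of rank at most $2$;
\item a product of two order-reversing maps is order-preserving;
\item rank-$1$ maps lie in $O_n$.
\end{itemize}
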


\begin{proof}
   Let $\alpha \notin <O_n \bigcup \{\beta_n\}>$, we want to prove that $<O_n \bigcup \{\beta_n\}>$ contained in $<O_n \bigcup \{\alpha\}>$.\\[10pt] 
   Let  
   
   $$\alpha=\begin{pmatrix}  1&2&3&4&...&n\\2&1&2&x&...&x
  \end{pmatrix} \ \ \ \text{where} \ \ \ 1<x<n.$$\\[10pt]
  Then
  $$\beta_n=
  \begin{pmatrix}  1&2&3&4&...&n\\1&2&2&2&...&2
  \end{pmatrix}\begin{pmatrix}  1&2&3&4&...&n\\2&1&2&x&...&x
  \end{pmatrix}.$$\\[10pt]
   Now let 
  $$\alpha=\begin{pmatrix}  1&2&3&4&...&n\\x&y&x&z&...&z
  \end{pmatrix} \ \ \ \text{where} \ \ 1<y\leq x-1<x.$$ 
  Then

  \vspace{0.7cm}
  $\beta_n=
  \begin{pmatrix}  1&2&3&4&...&n\\1&2&2&2&...&2
  \end{pmatrix}\begin{pmatrix}  1&2&3&4&...&n\\x&y&x&z&...&z
  \end{pmatrix}\begin{pmatrix}  1&2&3&...&x-1&x&x+1&...&n\\1&1&1&...&1&2&2&...&2
  \end{pmatrix}.$\\[10pt]
Thus $\beta_n \in <O_n \bigcup \{\alpha\}>$, and so $<O_n \bigcup \{\beta_n\}>$ is the intersection of all subsemigroups of $T_n$ which contain $O_n$.

  \vspace{0.5cm}
 Now we know that the semigroup $O_n$ is regular. Let 
 
 $$\alpha=\begin{pmatrix}  \{1,2,...i\}&...&\{i+1,...n\}\\y&...&x
  \end{pmatrix} \in OR_n \ \ \ \text{and} \ \ \ |Im(\alpha)|=2, \ \ \ \text{where} \ \ 1\leq x<y \leq n.$$ 
  
 Define 
  $$\beta=\begin{pmatrix}  \{1,2,...x\}&...&\{x+1,...n\}\\i+1&...&i
  \end{pmatrix}.$$\\[6pt] 
  Then $\beta \in OR_n$,  $|Im(\beta)|=2$, and $\alpha\beta\alpha=\alpha$. Hence  $<O_n \bigcup \{\beta_n\}>$ is regular.\\
\end{proof}

The following result will be used in the proof of the next proposition
\begin{theorem}(Isbell's Zigzag Theorem [\cite{RI}, Theorem 2.3]) \label{ZT}
    Let $U$ be a subsemigroup of $S$ and let $d \in S$. Then $d \in Dom_S(U)$ if and only if $d \in U$ or there exists a series of factorizations of $d$ as follows:
    \begin{center}
$d=u_0y_1=x_1u_1y_1=x_1u_2y_2=x_2u_3y_2=...=x_mu_{2m-1}y_m=x_mu_{2m}$,\\
    \end{center}
    where $u_i\in U, \ \ x_i,y_i\in S, \ \ u_0=x_1u_1, \ \ u_{2i-1}yi=u_{2i}y_{i+1}, \ \ x_iu_{2i}=x_{i+1}u_{2i+1} \ (1\leq i \leq m-1)$ and \ $u_{2m-1}y_m=u_{2m}$.
\end{theorem}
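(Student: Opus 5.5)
Since this is Isbell's classical theorem, quoted from \cite{RI}, the natural plan is the standard two-direction argument: a direct computation for sufficiency, and a tensor-product (equivalently, amalgamated free product) construction for necessity.

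\textbf{Sufficiency.} If $d\in U$ there is nothing to prove. Otherwise fix a semigroup $T$ and homomorphisms $f,g:S\to T$ with $f|_U=g|_U$, and walk along the zigzag, switching $f$ to $g$ one $u_i$ at a time. Start from
$$(d)f=(u_0)f(y_1)f=(u_0)g(y_1)f=(x_1)g(u_1)g(y_1)f=(x_1)g(u_1)f(y_1)f=(x_1)g(u_1y_1)f.$$
Then use $u_1y_1=u_2y_2$ to rewrite this as $(x_1)g(u_2)f(y_2)f$, and swap $(u_2)f$ for $(u_2)g$. Next use $x_1u_2=x_2u_3$ in $g$, swap $(u_3)g$ for $(u_3)f$, and so on. Each step uses either one of the defining equalities or the fact that $f$ and $g$ agree on some $u_i$. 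The chain ends with $(x_m)g(u_{2m})g=(d)g$, so $d\in Dom_S(U)$. This is the same bookkeeping as in the last part of Proposition \ref{vx}, iterated.

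\textbf{Necessity.} Suppose $d\in Dom_S(U)\setminus U$.
\begin{enumerate}[1.]
\item Adjoin an identity to get $S^1$ and form the tensor product $A=S^1\otimes_U S^1$. This is the set $S^1\times S^1$ modulo the equivalence $\sim$ generated by $(su,t)\sim(s,ut)$ for $u\in U$. Make $A$ an $S^1$-bimoduloid by left and right multiplication, and embed it into the semigroup $T$ obtained from $S^1\cup A$ with zero-type products $A\cdot A=\{0\}$ and a zero adjoined.
\item Define homomorphisms $f,g:S\to T$ that agree on $U$ but differ at $d$ unless $d\otimes 1=1\otimes d$. Concretely, use the "upper triangular" trick: map $s$ to the matrix $\begin{pmatrix}s&s\otimes1\\0&s\end{pmatrix}$ for $f$ and $\begin{pmatrix}s&1\otimes s\\0&s\end{pmatrix}$ for $g$, acting on $S^1\oplus A$. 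These agree on $U$ because $u\otimes 1=1\otimes u$ in $A$.
\item Domination then forces $d\otimes 1=1\otimes d$ in $A$. Hence there is a finite chain of elementary moves $(su,t)\leftrightarrow(s,ut)$ leading from $(d,1)$ to $(1,d)$.
\end{enumerate}

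\textbf{Extracting the zigzag.} Read off the intermediate pairs of the chain as alternately $(x_i,u_{2i}y_{i+1})$-type and $(x_iu_{2i-1},y_i)$-type. Then check that the chain can be normalised (taking a shortest chain) so that the moves strictly alternate between pushing a factor of $U$ left and pulling one right. This yields exactly the displayed equalities $d=u_0y_1$, $u_0=x_1u_1$, $u_{2i-1}y_i=u_{2i}y_{i+1}$, $x_iu_{2i}=x_{i+1}u_{2i+1}$, $u_{2m-1}y_m=u_{2m}$.

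I expect this last step to be the main obstacle. The difficulties are the normalisation of the chain, showing consecutive moves of the same side can be merged, and ensuring that the $x_i,y_i$ lie in $S$ rather than merely in $S^1$. If some $x_i$ or $y_i$ equals $1$, I would absorb it into an adjacent $u_j$ and conclude $d\in U$ or shorten the zigzag.
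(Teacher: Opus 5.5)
The paper does not prove this statement; it quotes Isbell's theorem from \cite{RI}, so your argument has to stand on its own. Your sufficiency computation is correct. Your plan for necessity is the standard modern route rather than Isbell's original one: first the tensor-product criterion $d\otimes 1=1\otimes d$ in $S^1\otimes_U S^1$ (due to Stenström), then reading the zigzag off a shortest chain of elementary moves. However, step 2, the only place where the hypothesis $d\in Dom_S(U)$ is used, fails as written, for two reasons.

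First, $A$ is only a set with commuting left and right $S^1$-actions. It has no addition, so ``upper triangular matrices'' with an entry in $A$, and their products, are undefined. The semigroup $S^1\cup A\cup\{0\}$ with $A\cdot A=\{0\}$ from step 1 plays no role and cannot supply the required maps.

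Second, even after linearising, your maps are not homomorphisms. The off-diagonal entry of $f(s)f(t)$ is $s(t\otimes 1)+(s\otimes 1)t=st\otimes 1+s\otimes t$, whereas that of $f(st)$ is $st\otimes 1$; the same failure occurs for $g$. So as it stands nothing forces $d\otimes 1=1\otimes d$.

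The fix is to use the difference of your two off-diagonal entries, i.e.\ an inner derivation.
\begin{itemize}
\item Let $\mathbb{Z}A$ be the free abelian group on the set $A$, an $S^1$-bimodule by linear extension. Let $T=S^1\times\mathbb{Z}A$ with $(s,m)(t,n)=(st,\,sn+mt)$, which is associative.
\item The map $\delta(s)=s\otimes 1-1\otimes s$ satisfies $\delta(st)=s\,\delta(t)+\delta(s)\,t$. Hence $f(s)=(s,\delta(s))$ and $g(s)=(s,0)$ are homomorphisms $S\to T$.
\item They agree on $U$, since $u\otimes 1=1\otimes u$.
\item If $d\in Dom_S(U)$ then $\delta(d)=0$. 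Since distinct elements of $A$ are linearly independent in $\mathbb{Z}A$, this gives $d\otimes 1=1\otimes d$ in $A$.
\end{itemize}

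From there your extraction works as sketched, provided $1$ is taken to be a newly adjoined identity. Take a chain of minimal length from $(1,d)$ to $(d,1)$.
\begin{itemize}
\item Two consecutive moves in the same direction merge into one, because $U$ is closed under products. So the moves alternate.
\item The first and last moves must shift a factor of $U$ from right to left, since $xu$ and $uy$ lie in $S$ and cannot equal $1$.
\item The two coordinates of every pair multiply to $d$. So an intermediate $x_i=1$ or $y_i=1$ would reproduce $(1,d)$ or $(d,1)$, contradicting minimality.
\item A one-move chain gives $d=u_0\in U$.
\end{itemize}
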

Such equations are known as a zigzag in $S$ over $U$ with value $d$, length $m$, and spine $u_0,u_1,...,
u_{2m}$ (in that order).\\

\vspace{0.5cm}
Now we characterize the dominion of $O_n$ in $T_n$.
\begin{proposition}
  The semigroup $O_n$ is closed in $T_n$, $Dom_{T_n}(O_n)=O_n$. 
\end{proposition}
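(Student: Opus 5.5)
The plan is to show that every $d\in Dom_{T_n}(O_n)$ is order-preserving, using Isbell's Zigzag Theorem (Theorem \ref{ZT}). If $d\in O_n$ there is nothing to prove. Otherwise fix a zigzag of length $m$ over $O_n$ with value $d$:
\[
d=u_0y_1=x_1u_1y_1=\dots=x_mu_{2m},
\]
where $u_i\in O_n$ and $x_i,y_i\in T_n$. Take $x\le y$ in $X_n$; the aim is to prove $xd\le yd$. Two approaches are available, and I would pursue the first.

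\textbf{First approach: propagate through the spine.} The idea is to compare $x$ and $y$ along the spine, tracking the pair $(xx_i,\,yx_i)$. Since $d=x_mu_{2m}$ and $u_{2m}$ is order-preserving, it suffices that $xx_m\le yx_m$, or at least that $xx_mu_{2m}\le yx_mu_{2m}$. Dually, $d=u_0y_1$, but this only yields $xu_0\le yu_0$, which does not control $y_1$.

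So the key step is an inductive claim: for each $i$, either $xx_iu_{2i}\le yx_iu_{2i}$, or the relevant points coincide after applying $u$. I would try to prove $xx_iu_{2i-1}\le yx_iu_{2i-1}$ by induction on $i$. The base case is $x_1u_1=u_0\in O_n$. For the inductive step I would use the pattern of the Proposition above: if $p\le q$ but the images come out in reverse order, then squeezing between two order-preserving maps forces equality. I expect this step to be the main obstacle, since $x_iu_{2i}=x_{i+1}u_{2i+1}$ relates a product ending in $u_{2i}$ to one ending in $u_{2i+1}$, and monotonicity of the composite $x_iu_{2i}$ does not obviously follow from that of $x_iu_{2i-1}$.

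\textbf{Second approach: a cleaner structural route.} Since $O_n$ is regular, a standard result states that for regular $U$ the dominion consists of $d$ such that $d=ds$ and $d=td$ for some $s,t$ in the regular part. More concretely, for regular $U$ one can choose zigzags whose spine elements are related by $\mathcal{L}$ and $\mathcal{R}$ to idempotents. Using the Lemma above ($\alpha\beta\alpha=\alpha$ with $\alpha\in O_n$ implies $\alpha\beta\in O_n$), one may replace each $x_i$ by $x_iu'_{2i}$ restricted to $Im$, where $u'$ is an inverse of $u$ in $O_n$. This makes the left factors $x_iu_{2i}u'_{2i}$ lie in $O_n$, so that $d=(x_mu_{2m}u'_{2m})u_{2m}$ is a product of elements of $O_n$.

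Concretely, I would show by induction on $i$ that
\[
x_iu_{2i-1}\in O_n \quad\text{and}\quad x_iu_{2i}\in O_n.
\]
The base case is $x_1u_1=u_0$. For the step $x_iu_{2i}$, write $u_{2i}=u_{2i}u'_{2i}u_{2i}$. Then the zigzag relation $u_{2i-1}y_i=u_{2i}y_{i+1}$, together with $d$, should allow $x_iu_{2i}$ to be expressed as $x_iu_{2i-1}$ composed with order-preserving pieces. This is the delicate point, and it is where the Proposition on $\beta\alpha\beta$ would be invoked. Once $x_mu_{2m}\in O_n$, we conclude $d\in O_n$, so $Dom_{T_n}(O_n)=O_n$.
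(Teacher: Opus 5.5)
\textbf{The gap is the inductive step you flag as delicate, and it cannot be proved as stated, because the claim is false.} Your first approach needs $p\,x_iu_{2i-1}\le q\,x_iu_{2i-1}$ for $p<q$. Your second needs $x_iu_{2i-1},\,x_iu_{2i}\in O_n$. Neither follows from the zigzag relations by induction on $i$, because both fail for some zigzags.

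Take $m=2$. Let $\iota$ be the identity and $c_1$ the constant map with value $1$. Put $x_1=x_2=\beta_n$, $u_1=c_1$, $u_2=u_3=\iota$, $y_1=\iota$, and $y_2=u_0=u_4=c_1$. Every zigzag relation holds, with value $d=c_1$. Yet $x_1u_1=c_1\in O_n$ while $x_1u_2=x_2u_3=\beta_n\notin O_n$, and $\beta_n$ reverses $1<2$.

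The step breaks whenever some $u_j$ collapses the two points you are tracking. Then the fact that $x_iu_{2i-1}$ preserves order tells you nothing about the order of $px_i$ and $qx_i$. Your ``squeezing forces equality'' case is exactly the case that stops the propagation.

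The second approach does not repair this. The characterisation you quote for regular $U$ cannot be right. Let $e\in O_n$ be the idempotent with $1e=1$ and $xe=2$ for $x\ge 2$. Then $\beta_n=e\beta_n=\beta_n e$, yet $\beta_n\notin Dom_{T_n}(O_n)$ by the very statement you are proving. Also, the claim $x_mu_{2m}u'_{2m}\in O_n$ is essentially the conclusion $d\in O_n$ again.

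\textbf{The missing idea is to fix one pair $p<q$ with $pd\neq qd$.} If $pd=qd$ there is nothing to show. Every factorisation in the zigzag has value $d$: namely $u_0y_1$, $x_iu_{2i-1}y_i$, $x_iu_{2i}y_{i+1}$ and $x_mu_{2m}$. So $p$ and $q$ have distinct images under $u_0$, under each $x_iu_{2i-1}$ and under each $x_iu_{2i}$. For $u\in O_n$ and $au\neq bu$ you have $a<b\iff au<bu$. The argument then runs as follows.
\begin{enumerate}[(a)]
\item Since $u_0\in O_n$, you get $pu_0<qu_0$, that is $px_1u_1<qx_1u_1$, which forces $px_1<qx_1$.
\item Then $px_1u_2<qx_1u_2$. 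Since $x_1u_2=x_2u_3$, this gives $px_2u_3<qx_2u_3$, which forces $px_2<qx_2$.
\item Continuing in the same way gives $px_m<qx_m$, and so $pd=px_mu_{2m}<qx_mu_{2m}=qd$.
\end{enumerate}
This shows directly that every element of the dominion is order-preserving. The $y_i$ enter only through the equalities with $d$.

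This also differs from the paper's route. The paper first reduces to showing $\beta_n\notin Dom_{T_n}(O_n)$, using the observation that $O_n$ together with any non-order-preserving map generates $\beta_n$. It then argues only about the first left factor $x_1$, asserting that it must be order-reversing of rank $2$. The pointwise argument above needs neither the reduction nor any information about the shape of the $x_i$.
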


\begin{proof}
 Suppose that 
 $$\beta_{n}=\begin{pmatrix}  1&2&3&...&n-1&n\\2&1&1&...&1&1
\end{pmatrix}\in Dom_{T_n}(O_n).$$\\[3pt]
So by Theorem \ref{ZT}, there exists a series of factorizations of $\beta_n$ as follows:
$$\beta_n=u_0y_1=x_1u_1y_1=x_1u_2y_2=x_2u_3y_2=...=x_mu_{2m-1}y_m=x_mu_{2m},$$
where $u_i\in O_n, x_i,y_i\in T_n, \ \ u_0=x_1u_1, \ \ u_{2i-1}yi=u_{2i}y_{i+1}, \ \ x_iu_{2i}=x_{i+1}u_{2i+1} \ (1\leq i \leq m-1)$ and \ $u_{2m-1}y_m=u_{2m}$.
Let $x_1 \in T_n  \setminus O_n$, then $x_1$ must be an order-reversing map and $|Im(x_1)|=2$. So we need to define $u_1 \in O_n$ such that $u_0=x_1u_1$, where $u_0 \in O_n$. However, the product of an order-reversing map and an order-preserving map is order reversing, so $x_1u_1 \neq u_0$. Therefore, it is impossible to have a zigzag in $T_n$ over $O_n$ with $\beta_n$; this is a contradiction. Thus $\beta_n \notin Dom_{T_n}(O_n)$, and so $O_n$ is closed in $T_n$. 
\end{proof}

\vspace{0.5cm}
\begin{example}
Consider the chain $X_3=\{1,2,3\}$. The dominion of $O_3$ in $T_3$ is 

\vspace{0.5cm}
$Dom_{T_3}(O_3)= \Bigg\{
   \begin{pmatrix}  1&2&3\\1&2&3
\end{pmatrix}, 
\begin{pmatrix}  1&2&3\\1&2&2
\end{pmatrix},
\begin{pmatrix}  1&2&3\\1&3&3
\end{pmatrix},
\begin{pmatrix}  1&2&3\\2&3&3
\end{pmatrix},
\begin{pmatrix}  1&2&3\\1&1&2
\end{pmatrix},
\begin{pmatrix}  1&2&3\\1&1&3
\end{pmatrix},
\begin{pmatrix}  1&2&3\\2&2&3
\end{pmatrix},\\
\vspace{0.1cm}
\hspace{3.1cm}
\begin{pmatrix}  1&2&3\\1&1&1
\end{pmatrix},
\begin{pmatrix}  1&2&3\\2&2&2
\end{pmatrix},
\begin{pmatrix}  1&2&3\\3&3&3
\end{pmatrix}
\Bigg\}.$
\end{example}


\section{Dominion of $C_n$ in $T_n$}
\vspace{0.5cm}
Let $E(C_n)$ be the set of idempotents in $C_n$. Now we study the dominion of $C_n$ in $T_n$ by characterize $\alpha^{\prime}$ in $T_n \setminus C_n$ so that $\alpha^{\prime}\alpha\alpha^{\prime}$ dominated by $C_n$.
\vspace{0.2cm}
\begin{proposition}\label{coni}
Let $\alpha$ $\in C_n$ and $\alpha^{\prime}$ $\in T_n \setminus C_n$ defined by\\
 \begin{equation*}
 \tag{3}
    x\alpha^{\prime}= \begin{cases}
    \text{min} (x\alpha^{-1}) & \text{if } x \in Im(\alpha)\\
    s\in[(z\alpha)\alpha^{-1}] \,\,\,  \text{such that} \,\, z\alpha=\text{max} \{m\in Im(\alpha) : m<x\}  & \text{if } x \notin Im(\alpha)
    \end{cases}
\end{equation*}\\
Then \\ 
1- $\alpha\alpha^{\prime}\alpha=\alpha$\\
2- $\alpha \alpha^{\prime} \in E(C_n)$\\
3- $\alpha^{\prime}\alpha \in E(C_n)$\\
4- $\alpha^{\prime} \alpha \alpha^{\prime} \in Dom_{T_{n}}(C_n)$
\end{proposition}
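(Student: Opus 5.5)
The plan is to follow the proof of Proposition \ref{vx} step by step. At each stage I will also check that the relevant products are \emph{order-preserving}, so that they land in $C_n=D_n\cap O_n$ and not merely in $D_n$.

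First, $\alpha^{\prime}$ is well defined. Since $\alpha\in C_n$ is order-decreasing, $1\alpha=1$, so $1\in Im(\alpha)$. Hence if $x\notin Im(\alpha)$ then $x>1$, and the set $\{m\in Im(\alpha): m<x\}$ is nonempty, so its maximum $m_x$ exists. Write $m_x=z\alpha$; the preimage $(z\alpha)\alpha^{-1}$ is nonempty, so some $s$ in it can be chosen. It will be convenient to record two facts.
\begin{itemize}
\item Because $\alpha$ is order-preserving, its blocks $A_i=a_i\alpha^{-1}$ are intervals of $X_n$. If $a_i<a_j$ then every element of $A_i$ is below every element of $A_j$, so $\min(a_i\alpha^{-1})<\min(a_j\alpha^{-1})$.
\item For $x\in Im(\alpha)$, put $m_x=x$. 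Then $m_x=\max\{m\in Im(\alpha): m\le x\}$ for every $x\in X_n$.
\end{itemize}

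Part 1 is exactly as in Proposition \ref{vx}. For each $x$, $x\alpha\in Im(\alpha)$, so $(x\alpha)\alpha^{\prime}=\min((x\alpha)\alpha^{-1})$, and applying $\alpha$ returns $x\alpha$.

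For part 2, $x\alpha\alpha^{\prime}=\min((x\alpha)\alpha^{-1})\le x$, since $x$ lies in that preimage. The map $\alpha\alpha^{\prime}$ is idempotent by part 1, because $(\alpha\alpha^{\prime})^2=(\alpha\alpha^{\prime}\alpha)\alpha^{\prime}=\alpha\alpha^{\prime}$. It remains to show it is order-preserving. If $x\le y$ then $x\alpha\le y\alpha$, and by the interval property of the blocks, $\min((x\alpha)\alpha^{-1})\le\min((y\alpha)\alpha^{-1})$. Alternatively, order preservation follows from the earlier lemma on $O_n$, since $\alpha\alpha^{\prime}\alpha=\alpha$. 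So $\alpha\alpha^{\prime}\in E(C_n)$.

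For part 3, I will show $x\alpha^{\prime}\alpha=m_x$ for all $x$. If $x\in Im(\alpha)$, then $x\alpha^{\prime}\alpha=x$. If $x\notin Im(\alpha)$, then $x\alpha^{\prime}\alpha=s\alpha=z\alpha=m_x$; notably this is independent of the choice of $s$. The map $x\mapsto m_x$ has the following properties:
\begin{itemize}
\item it is order-decreasing, since $m_x\le x$;
\item it is order-preserving, since $x\le y$ implies $\{m\in Im(\alpha): m\le x\}\subseteq\{m\in Im(\alpha): m\le y\}$;
\item it is idempotent, since it fixes $Im(\alpha)$ pointwise and its image is $Im(\alpha)$.
\end{itemize}
Hence $\alpha^{\prime}\alpha\in E(C_n)$.

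Part 4 is verbatim the homomorphism computation of Proposition \ref{vx}. Take homomorphisms $f,g$ with $f|_{C_n}=g|_{C_n}$. In the expression for $(\alpha^{\prime}\alpha\alpha^{\prime})f$, replace $f$ by $g$ successively on the factors $\alpha^{\prime}\alpha$, $\alpha$ and $\alpha\alpha^{\prime}$. This is legitimate because all three lie in $C_n$ by parts 2 and 3 and the hypothesis. The result is $(\alpha^{\prime}\alpha\alpha^{\prime})g$.

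The only genuinely new point compared with Proposition \ref{vx} is the order-preservation in parts 2 and 3. I expect the main care to be needed in part 3, where the specific choice $z\alpha=\max\{m\in Im(\alpha): m<x\}$ in (3) is exactly what makes $\alpha^{\prime}\alpha$ monotone. An arbitrary $z\alpha<x$, as in (1), would not suffice.
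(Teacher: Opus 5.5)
Your proposal is correct and follows the paper's proof essentially step for step: parts 1, 2 and 4 use the same computations, with order-preservation checked so that the products land in $C_n=D_n\cap O_n$. The only difference is in part 3. You identify $\alpha^{\prime}\alpha$ with the map $x\mapsto\max\{m\in Im(\alpha): m\le x\}$, whose monotonicity is immediate from the nested sets, whereas the paper compares $z_1\alpha$ and $z_2\alpha$ in four cases and states some of those inequalities as strict when they only need to be (and can only be guaranteed to be) non-strict. Your version is therefore slightly cleaner, and it also addresses well-definedness of $\alpha^{\prime}$ via $1\in Im(\alpha)$, which the paper leaves implicit.
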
  

\begin{proof}
Let $x \in X_n$. First, we will show that $x\alpha\alpha^{\prime}\alpha=x\alpha$. Since $x\alpha$ $\in Im(\alpha)$ and $min((x\alpha)\alpha^{-1}) \in (x\alpha)\alpha^{-1}$, then
$x\alpha \alpha^{\prime}\alpha =(min((x\alpha)\alpha^{-1}))\alpha=x\alpha,$ so $\alpha \alpha^{\prime}\alpha=\alpha$.

\vspace{0.2cm}
For the second part, since $x \in (x\alpha)\alpha^{-1}$, then 
  $x\alpha \alpha^{\prime}=min((x\alpha)\alpha^{-1}) \leq x,$ so $\alpha \alpha^{\prime} \in D_n$. Let $x,y \in X_n$ and $x \leq y$. Since $\alpha \in O_n$, then $max((x\alpha)\alpha^{-1}) \leq min((y\alpha)\alpha^{-1})$,
  and $x\alpha \alpha^{\prime} =min((x\alpha)\alpha^{-1}) \leq min((y\alpha)\alpha^{-1})=y\alpha \alpha^{\prime},$
  so $\alpha\alpha^{\prime}\in O_n$, thus $\alpha\alpha^{\prime}\in O_n \cap D_n=C_n$.
  Also
$(\alpha\alpha^{\prime})^2=\alpha\alpha^{\prime}$, so $\alpha \alpha^{\prime} \in E(C_n)$.

\vspace{0.1cm}
Next, for the third part,
  if $x \in Im(\alpha)$, then $x \alpha^{\prime}\alpha=(min(x\alpha^{-1}))\alpha=x.$
  If $x \notin Im(\alpha)$, then 
  $x \alpha^{\prime}\alpha  =s\alpha=z\alpha<x$,
  thus $\alpha^{\prime}\alpha \in D_n$. Now let $x,y \in X_n$ and $x \leq y$, we consider four cases. Let
  $$ z_1\alpha=max\{m \in Im(\alpha): m<x\} \ \ \ \text{and} \ \ \ z_2\alpha=max\{m \in Im(\alpha): m<y\}.$$
  If $x,y \in Im(\alpha)$, then
  $x \alpha^{\prime}\alpha=x \leq y=y\alpha^{\prime}\alpha$.\\
  If $x \notin Im(\alpha)$ but $y \in Im(\alpha)$, then
  $x \alpha^{\prime}\alpha=z_1\alpha<x \leq y=y\alpha^{\prime}\alpha$.\\
  If $x \in Im(\alpha)$ but $y \notin Im(\alpha)$, then
  $y \alpha^{\prime}\alpha=z_2\alpha$. Since $x \leq y$ and $z_2\alpha<y$ but $z_2\alpha$ is the maximum, then $x \alpha^{\prime}\alpha=x<z_2\alpha=y \alpha^{\prime}\alpha$.\\
  If $x,y \notin Im(\alpha)$, then
$x \alpha^{\prime}\alpha=z_1\alpha$ and $y \alpha^{\prime}\alpha=z_2\alpha$. Since $z_1\alpha<x \leq y$ and $z_2\alpha<y$, but $z_2\alpha$ is the maximum, then $x \alpha^{\prime}\alpha=z_1\alpha<z_2\alpha=y \alpha^{\prime}\alpha$, thus $\alpha^{\prime}\alpha \in O_n$ and so $\alpha^{\prime}\alpha \in O_n \cap D_n=C_n$. Also
$(\alpha^{\prime}\alpha)^2=\alpha^{\prime}\alpha$, so $\alpha^{\prime}\alpha \in E(C_n)$.

\vspace{0.1cm}
For the last part, let $f,g : T_n \rightarrow T$ be semigroup homomorphisms with $f|_{C_{n}}=g|_{C_{n}}$. \\
Then
\ \ \ \ \ \ 
$(\alpha^{\prime}\alpha\alpha^{\prime})f
= (\alpha^{\prime}\alpha)f(\alpha^{\prime})f\\
\vspace{0.1cm}
\hspace{86pt}
=(\alpha^{\prime}\alpha)g(\alpha^{\prime})f \hspace{50pt}[\alpha^{\prime}\alpha \in C_n]\\
\vspace{0.1cm}
\hspace{86pt}= (\alpha^{\prime})g(\alpha)g(\alpha^{\prime})f\\
\vspace{0.1cm}
\hspace{86pt}
= (\alpha^{\prime})g(\alpha)f(\alpha^{\prime})f \hspace{35pt}[\alpha \in C_n]
\\
\vspace{0.1cm}
\hspace{86pt}= (\alpha^{\prime})g(\alpha\alpha^{\prime})f\\
\vspace{0.1cm}
\hspace{86pt}
= (\alpha^{\prime})g(\alpha\alpha^{\prime})g \hspace{50pt}[\alpha \alpha^{\prime} \in C_n]
\\
\vspace{0.1cm}
\hspace{86pt}= (\alpha^{\prime}\alpha\alpha^{\prime})g$.\\
Hence, $\alpha^{\prime} \alpha \alpha^{\prime} \in Dom_{T_n}(C_n)$.\\
\end{proof}

Next we characterize $\alpha \in C_n$ so that $\alpha^{\prime}\alpha\alpha^{\prime} \in Dom_{T_n}(C_n) \setminus C_n$.
\vspace{0.1cm}
\begin{lemma} \label{EcN}
Let $\alpha$ $\in C_n$ and $\alpha^{\prime}$ $\in T_n \setminus C
_n$ defined by (3). \\
1- If $\alpha \in E(C_n)$, then $\alpha^{\prime}\alpha\alpha^{\prime} \in C_n$.\\
2- If $\alpha \notin E(C_n)$, then $\alpha^{\prime}\alpha\alpha^{\prime} \in Dom_{T_n}(C_n) \setminus C_n$.
\end{lemma}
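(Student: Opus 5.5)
Both parts are proved by computing $x\alpha'\alpha\alpha'$ pointwise from definition (3), splitting into $x\in Im(\alpha)$ and $x\notin Im(\alpha)$, exactly as in the Lemma following Proposition \ref{vx} for $D_n$. Membership $\alpha'\alpha\alpha'\in Dom_{T_n}(C_n)$ is already given by Proposition \ref{coni}, part 4. So the only task is to decide when $\alpha'\alpha\alpha'$ lies in $C_n=D_n\cap O_n$.

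For part 1, let $\alpha\in E(C_n)$. Then each $m\in Im(\alpha)$ is fixed, so $m\in m\alpha^{-1}$. Since $\alpha\in D_n$, every element of $m\alpha^{-1}$ is $\ge m$, hence $\min(m\alpha^{-1})=m$. Consequently:
\begin{itemize}
\item if $x\in Im(\alpha)$, then $x\alpha'\alpha\alpha'=(\min(x\alpha^{-1}))\alpha\alpha'=x\alpha'=x$;
\item if $x\notin Im(\alpha)$, then $x\alpha'=s$ with $s\alpha=z\alpha=\max\{m\in Im(\alpha):m<x\}$, and $x\alpha'\alpha\alpha'=(z\alpha)\alpha'=\min((z\alpha)\alpha^{-1})=z\alpha$.
\end{itemize}
(The set $\{m\in Im(\alpha):m<x\}$ is nonempty because $1\alpha=1$.) So $\alpha'\alpha\alpha'$ sends $x$ to the largest element of $Im(\alpha)$ that is $\le x$; that is, it coincides with $\alpha'\alpha$, which lies in $E(C_n)\subseteq C_n$ by Proposition \ref{coni}, part 3. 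One can also check directly that this map is order-decreasing and order-preserving, reusing the four-case argument from Proposition \ref{coni}.

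For part 2, let $\alpha\notin E(C_n)$. Then some $x\in Im(\alpha)$ satisfies $x\notin x\alpha^{-1}$. Because $\alpha\in D_n$, every $y\in x\alpha^{-1}$ satisfies $y\ge x\alpha=x$, and $y\ne x$, so $y>x$. For this $x$ we get
$$x\alpha'\alpha\alpha'=(\min(x\alpha^{-1}))\alpha\alpha'=x\alpha'=\min(x\alpha^{-1})>x.$$
Thus $\alpha'\alpha\alpha'\notin D_n$, hence $\alpha'\alpha\alpha'\notin C_n$. Combined with Proposition \ref{coni}, part 4, this gives $\alpha'\alpha\alpha'\in Dom_{T_n}(C_n)\setminus C_n$.

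The main obstacle is the case $x\notin Im(\alpha)$ in part 1. There one must confirm that definition (3) makes $x\alpha'$ lie in $(z\alpha)\alpha^{-1}$, so that applying $\alpha$ returns exactly $z\alpha$. One must also use idempotency to conclude $(z\alpha)\alpha'=z\alpha$. After that, order-preservation reduces to monotonicity of $x\mapsto\max\{m\in Im(\alpha):m\le x\}$, which is immediate.
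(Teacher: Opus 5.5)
Your proposal is correct and follows essentially the same route as the paper. Both arguments compute $x\alpha'\alpha\alpha'$ pointwise, splitting on whether $x\in Im(\alpha)$, and both identify $\alpha'\alpha\alpha'=\alpha'\alpha$ in the idempotent case; the paper checks order-decrease directly and uses that identification only for order-preservation, whereas you use it for both, and part 2 uses the identical witness $x\alpha'\alpha\alpha'=\min(x\alpha^{-1})>x$. Your explicit checks that $\min(m\alpha^{-1})=m$ for $m\in Im(\alpha)$, and that $\{m\in Im(\alpha):m<x\}\neq\emptyset$ because $1\alpha=1$, fill in small points the paper leaves implicit.
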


\begin{proof}
   1- Let $\alpha \in E(C_n)$ and $x \in X_n$. We will show that $\alpha^{\prime}\alpha\alpha^{\prime} \in O_n \cap D_n$. If $x \in Im(\alpha)$, then 
   $$x \alpha^{\prime}\alpha\alpha^{\prime} =(min(x\alpha^{-1}))\alpha\alpha^{\prime}=x\alpha^{\prime}=min(x\alpha^{-1})=x.$$
   If $x \notin Im(\alpha)$, then 
   $$x \alpha^{\prime}\alpha\alpha^{\prime} =s\alpha\alpha^{\prime}=z\alpha\alpha^{\prime}=(min((z\alpha)\alpha^{-1}))=z\alpha < x,$$ thus for all $x \in X_n$, $x\alpha^{\prime}\alpha\alpha^{\prime} \leq x$, so $\alpha^{\prime}\alpha\alpha^{\prime}\in D_n$.

   \vspace{0.2cm}
   From Proposition \ref{coni}, we observe that if $\alpha \in E(C_n)$, then  $$x\alpha^{\prime}\alpha\alpha^{\prime}=x=x\alpha^{\prime}\alpha \ \ (\text{if} \ x \in Im(\alpha)),\ \ \ \text{and} \ \ \ x\alpha^{\prime}\alpha\alpha^{\prime}=z\alpha=x\alpha^{\prime}\alpha \ \ (\text{if} \ x \notin Im(\alpha)),$$ 
   thus for all $x,y \in X_n$ and $x \leq y$, $x\alpha^{\prime}\alpha\alpha^{\prime} \leq y \alpha^{\prime}\alpha\alpha^{\prime}$, so $\alpha^{\prime}\alpha\alpha^{\prime} \in O_n$. Hence $\alpha^{\prime}\alpha\alpha^{\prime} \in D_n \cap O_n=C_n$. 

\vspace{0.3cm}
  2- Let $\alpha \notin E(C_n)$. Then $\exists \ x \in Im(\alpha)$ such that $x \notin x\alpha^{-1}$,
   so 
   $$x \alpha^{\prime}\alpha\alpha^{\prime} =(min(x\alpha^{-1}))\alpha\alpha^{\prime}=x\alpha^{\prime}=min (x\alpha^{-1}).$$ Since $\forall \ y \in x\alpha^{-1}, y >x$, then $x\alpha^{\prime}\alpha\alpha^{\prime} > x$,
  and so 
$\alpha^{\prime}\alpha\alpha^{\prime} \notin D_n$, thus
  $\alpha^{\prime}\alpha\alpha^{\prime} \in Dom_{T_n}(C_n) \setminus C_n$.\\
  \end{proof}

\vspace{0.1cm}
  \begin{remark}
It is not guaranteed that $\alpha^{\prime}\alpha\alpha^{\prime}=\alpha^{\prime}$. For example,\\

\vspace{0.3cm}
let \
$\alpha=
\begin{pmatrix}  1&2&3&4&5\\1&1&2&4&4
\end{pmatrix}
$ $\in C_5,$ then
$\alpha^{\prime}=
\begin{pmatrix}  1&2&3&4&5\\1&3&3&4&5
\end{pmatrix}$ $\in T_5$,
so  
$\alpha^{\prime}\alpha\alpha^{\prime}=
\begin{pmatrix}  1&2&3&4&5\\1&3&3&4&4
\end{pmatrix}
$$\neq \alpha^{\prime}$.
\end{remark}

\vspace{0.8cm}
In the following result, we will see that for which $\alpha^{\prime}$ this equality will be true.
\vspace{0.1cm}
\begin{lemma}
Let $\alpha$ $\in C_n$ and $\alpha^{\prime}$ $\in T_n \setminus C_n$ defined by
 \begin{equation}
 \tag{4}
    x\alpha^{\prime}=
    \begin{cases}
    \text{min} (x\alpha^{-1}) & \text{if } x \in Im(\alpha)\\
    min((z\alpha)\alpha^{-1}) \ \ \text{such that} \,\, \, z\alpha=\text{max} \{m\in Im(\alpha) : m<x\}  & \text{if } x \notin Im(\alpha)
    \end{cases}
\end{equation}\\
Then $\alpha^{\prime}\alpha\alpha^{\prime}=\alpha^{\prime}$.
 \end{lemma}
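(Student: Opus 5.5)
The plan is to verify the identity pointwise, $x\alpha^{\prime}\alpha\alpha^{\prime}=x\alpha^{\prime}$ for every $x\in X_n$, following the same pattern as the corresponding lemma for $D_n$ with definition (2). The only new ingredient compared with Proposition \ref{coni} is that for $x\notin Im(\alpha)$ the value $x\alpha^{\prime}$ is now pinned down as the least element of the block $(z\alpha)\alpha^{-1}$, rather than an arbitrary element $s$ of that block.

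First, I would check that definition (4) makes sense. Since $\alpha\in C_n$, we have $1\alpha\le 1$, so $1\alpha=1$ and $1\in Im(\alpha)$. Hence for every $x\notin Im(\alpha)$ we have $x>1$, and the set $\{m\in Im(\alpha): m<x\}$ contains $1$, so it is nonempty. Thus $z\alpha$ is a well-defined element of $Im(\alpha)$, and $(z\alpha)\alpha^{-1}$ is a nonempty block with a minimum. I would also record the single fact used repeatedly: for any $w\in Im(\alpha)$, $(\min(w\alpha^{-1}))\alpha=w$, since the minimum lies in the fibre of $w$.

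Case $x\in Im(\alpha)$: here $x\alpha^{\prime}\alpha=(\min(x\alpha^{-1}))\alpha=x$, so $x\alpha^{\prime}\alpha\alpha^{\prime}=x\alpha^{\prime}$ immediately.

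Case $x\notin Im(\alpha)$: let $w=z\alpha=\max\{m\in Im(\alpha):m<x\}$. Then $x\alpha^{\prime}=\min(w\alpha^{-1})$, so $x\alpha^{\prime}\alpha=w$. Since $w\in Im(\alpha)$, the first clause of (4) gives $w\alpha^{\prime}=\min(w\alpha^{-1})=x\alpha^{\prime}$. Therefore $x\alpha^{\prime}\alpha\alpha^{\prime}=x\alpha^{\prime}$.

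There is no serious obstacle. The only point needing care is in the second case: the element $w=x\alpha^{\prime}\alpha$ must be sent back by $\alpha^{\prime}$ via the first branch of (4), because $w\in Im(\alpha)$. That branch returns exactly the minimum chosen in the second branch. This is precisely where fixing the choice $s=\min((z\alpha)\alpha^{-1})$ matters; with an arbitrary $s$ the identity fails, as the preceding remark shows.
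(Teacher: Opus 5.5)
Your proposal is correct and matches the paper's proof: the same pointwise two-case check, using $(\min(w\alpha^{-1}))\alpha=w$ and the fact that $x\alpha^{\prime}\alpha=z\alpha\in Im(\alpha)$ is sent by the first branch of (4) back to the same minimum. Your extra observation that $1\alpha=1$ makes $\{m\in Im(\alpha): m<x\}$ nonempty is a useful well-definedness check that the paper leaves implicit.
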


\begin{proof}
    Let $x \in X_n$. We will show that $x\alpha^{\prime}\alpha\alpha^{\prime}=x\alpha^{\prime}$. If $x \in Im(\alpha)$, then $$x\alpha^{\prime}\alpha\alpha^{\prime} =(min(x\alpha^{-1}))\alpha\alpha^{\prime}=x\alpha^{\prime}.$$
    If $x \notin Im(\alpha)$, then   $$x\alpha^{\prime}\alpha\alpha^{\prime} =(min((z\alpha)\alpha^{-1}))\alpha\alpha^{\prime}= z\alpha\alpha^{\prime}=(min((z\alpha)\alpha^{-1}))=x\alpha^{\prime},$$
    thus $\alpha^{\prime}\alpha\alpha^{\prime}=\alpha^{\prime}$.\\
\end{proof}


In what follows we show that it is only one element $\alpha^{\prime\prime} \in T_n$ that we need to add to $C_n$ to generate all elements of $Dom_{T_{n}}(C_n)$. Moreover, it will be shown that $Dom_{T_{n}}(C_n)$ is the smallest regular semigroup containing $C_n$.

\vspace{0.2cm}
\begin{proposition} \label{pcn}
Let $\alpha \in C_n$ defined by 

\vspace{0.5cm}
\hspace{6cm} $x\alpha=max(1,x-1), \ \ \forall \ x\in X_n.$\\[10pt]
and $\alpha^{\prime\prime}$ is given by\\
\vspace{0.1cm}
\hspace{6cm} $x\alpha^{\prime\prime}=min(x+1,n), \ \ \forall \ x>1, \ 1\alpha^{\prime\prime}=1$.\\[10pt]
Then $Dom_{T_n}(C_n)= \ <C_n \bigcup \{{\alpha^{\prime\prime}}\}>$.
\end{proposition}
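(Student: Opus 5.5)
The proof has two inclusions. For $<C_n \bigcup \{\alpha^{\prime\prime}\}> \subseteq Dom_{T_n}(C_n)$ I will follow the argument used for Proposition \ref{ndn}.

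\textbf{The inclusion $\supseteq$.} Since $Dom_{T_n}(C_n)$ is a subsemigroup containing $C_n$, it is enough to show $\alpha^{\prime\prime}\in Dom_{T_n}(C_n)$. The same pointwise computations as in Section 2 give
$$\alpha\alpha^{\prime\prime}=\begin{pmatrix}1&2&3&\cdots&n\\1&1&3&\cdots&n\end{pmatrix},\qquad \alpha^{\prime\prime}\alpha=\begin{pmatrix}1&2&\cdots&n-1&n\\1&2&\cdots&n-1&n-1\end{pmatrix}.$$
Both maps are visibly order-decreasing and order-preserving, so they lie in $C_n$. Moreover $\alpha^{\prime\prime}\alpha\alpha^{\prime\prime}=\alpha^{\prime\prime}$, since $\alpha^{\prime\prime}\alpha$ fixes everything except $n\mapsto n-1$ and $(n-1)\alpha^{\prime\prime}=n=n\alpha^{\prime\prime}$. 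The chain of equalities in part 4 of Proposition \ref{coni} used only that $\alpha$, $\alpha\alpha^{\prime}$ and $\alpha^{\prime}\alpha$ lie in $C_n$. Running it with $\alpha^{\prime}$ replaced by $\alpha^{\prime\prime}$ therefore gives $(\alpha^{\prime\prime})f=(\alpha^{\prime\prime}\alpha\alpha^{\prime\prime})f=(\alpha^{\prime\prime}\alpha\alpha^{\prime\prime})g=(\alpha^{\prime\prime})g$.

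\textbf{The inclusion $\subseteq$.} I will sandwich the dominion using the two results already proved. Dominions are monotone in the subsemigroup: if $U\subseteq V\subseteq S$, then any pair of homomorphisms agreeing on $V$ agrees on $U$, so $Dom_S(U)\subseteq Dom_S(V)$. Apply this with $C_n\subseteq O_n$ and $C_n\subseteq D_n$. The closedness of $O_n$ in $T_n$ gives $Dom_{T_n}(C_n)\subseteq O_n$. Proposition \ref{ndn} gives $Dom_{T_n}(C_n)\subseteq T_n^{(1)}$. Hence
$$Dom_{T_n}(C_n)\subseteq O_n^{(1)}:=\{\beta\in O_n: 1\beta=1\}.$$
It then remains to prove $O_n^{(1)}\subseteq <C_n\bigcup\{\alpha^{\prime\prime}\}>$. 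The reverse containment is immediate, since every generator is order-preserving and fixes $1$. This will also show that both sets equal $O_n^{(1)}$.

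\textbf{Generating $O_n^{(1)}$.} I will adapt Howie's result that $O_n$ is generated by its idempotents of rank $n-1$, namely $e_i:i\mapsto i+1$ and $f_i:i+1\mapsto i$. First I will check that the proof of that result (or the proof of the lemma $J^*_r\subseteq <J^*_{n-1}>$) respects the condition $1\beta=1$. This should show that $O_n^{(1)}$ is generated by the order-preserving rank-$(n-1)$ idempotents fixing $1$, which are $f_i$ ($1\le i\le n-1$) and $e_i$ ($2\le i\le n-1$). Each $f_i$ is already in $C_n$. The main obstacle is writing each increasing idempotent $e_i$ as a product of elements of $C_n$ and $\alpha^{\prime\prime}$.

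\textbf{Obtaining the $e_i$.} My first attempt is a product $\delta\alpha^{\prime\prime}\gamma$ with $\delta,\gamma\in C_n$, chosen as follows:
\begin{itemize}
\item $\delta$ is a decreasing idempotent that collapses $n$ onto $n-1$, removing the collision $n\alpha^{\prime\prime}=(n-1)\alpha^{\prime\prime}$;
\item $\alpha^{\prime\prime}$ shifts $\{2,\dots,n-1\}$ up by one;
\item $\gamma$ shifts back down every point except those landing in $\{i+1,i+2\}$, which it sends to $i+1$.
\end{itemize}
The candidate for $\gamma$ is $y\mapsto y-1$ for $y\neq i+1$ and $(i+1)\gamma=i+1$, with $1\gamma=1$. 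This lies in $C_n$, and $\alpha^{\prime\prime}\gamma$ agrees with $e_i$ except at $n$. I will then choose $\delta$, or an extra factor from $C_n$, to repair the value at $n$.

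\textbf{Fallback.} If this repair fails, I will mimic the $D_n$ case. I will form products $f\,\alpha^{\prime\prime}$ with $f\in C_n$, giving order-preserving analogues of the maps $\alpha^{\prime}_y$. I will then take a suitable power, so that the idempotent in its cyclic subsemigroup is exactly $e_i$.
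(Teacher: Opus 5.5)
Your inclusion $\supseteq$ is the paper's argument. Your sandwich $Dom_{T_n}(C_n)\subseteq Dom_{T_n}(O_n)\cap Dom_{T_n}(D_n)\subseteq O_n^{(1)}$ justifies a step the paper only asserts, and it correctly reduces everything to generating $O_n^{(1)}$. The gap is at the step you flagged yourself: neither of your constructions produces $e_i$ in general.

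\emph{The first attempt fails for rank reasons, not because of one wrong value at $n$.} For $2\le i\le n-2$ your $\gamma$ identifies both $1,2$ and $i+1,i+2$, so it has rank $n-2$. Every product with $\gamma$ as a factor therefore has rank at most $n-2<n-1$, and no choice of $\delta$ or extra factors can give $e_i$. Your $\delta=(1,\dots,n-1,n-1)$ also does nothing, since $\delta\alpha^{\prime\prime}=\alpha^{\prime\prime}$. The attempt works only for $i=n-1$, where indeed $\alpha^{\prime\prime}\gamma=e_{n-1}$.

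\emph{The fallback cannot work at all.} Every product of elements of $C_n\cup\{\alpha^{\prime\prime}\}$ is order-preserving, and order-preserving maps have no nontrivial cycles. Precisely, suppose $\beta\in O_n$ has rank $n-1$ and $\beta^k=e_i$. Then $Im(\beta^2)=Im(\beta)$, so $\beta$ is an order-preserving permutation of $Im(\beta)$, i.e.\ the identity there. Hence $\beta=\beta^2=e_i$ already, and taking powers adds nothing. Moreover $Im(f\alpha^{\prime\prime})\subseteq X_n\setminus\{2\}$, so for $i\ge 3$ no power of any $f\alpha^{\prime\prime}$ equals $e_i$. The cycles in the paper's own computation for this lemma appear only because it uses factors such as $f_{n,3}$, which for $n\ge 5$ is not order-preserving and so not in $C_n$. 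Mimicking that computation will not help.

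\emph{The missing idea is to track kernel and image.} In a product of rank-$(n-1)$ maps that still has rank $n-1$, the collapsed pair is that of the first factor, and the missing image point is that of the last. Your shape $\delta\alpha^{\prime\prime}\gamma$ is right, but the collapse must happen in $\delta$. Collapse $\{i,i+1\}$ first and shift everything above it down, so the image avoids the collision of $\alpha^{\prime\prime}$ at $\{n-1,n\}$. Then shift $\{2,\dots,i\}$ back down afterwards. With
\[
\beta_1=\begin{pmatrix}1&\cdots&i&i+1&\cdots&n\\1&\cdots&i&i&\cdots&n-1\end{pmatrix}\in C_n,\qquad
\beta_2=\begin{pmatrix}1&2&\cdots&i&i+1&\cdots&n\\1&1&\cdots&i-1&i+1&\cdots&n\end{pmatrix}\in C_n,
\]
one checks that $\beta_1\alpha^{\prime\prime}\beta_2=e_i$ for all $2\le i\le n-1$.

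\emph{The reduction step is also only announced.} You still need to show that $O_n^{(1)}$ is generated by $e_2,\dots,e_{n-1},f_1,\dots,f_{n-1}$. The paper's lemma $J^*_r\subseteq\langle J^*_{n-1}\rangle$ will not supply this, because its point $y$ with $b_k<y<b_{k+1}$ need not exist; for example, take $(1,1,1,2)\in O_4^{(1)}$. A direct route is as follows. Let $\beta\in O_n^{(1)}$ have kernel blocks $A_1<\dots<A_r$ and images $1=b_1<\dots<b_r$. Define $x\lambda=j$ for $x\in A_j$. Define $j\mu=b_j$ for $j\le r$, and $x\mu=\max(b_r,x)$ for $x>r$. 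Then:
\begin{itemize}
\item $\lambda\in C_n$;
\item $\mu$ is order-preserving, extensive and fixes $1$;
\item $\beta=\lambda\mu$.
\end{itemize}
The extensive order-preserving maps of $\{2,\dots,n\}$ form a copy of $C_{n-1}$ under order reversal, so they are generated by $e_2,\dots,e_{n-1}$. Together with the formula for $e_i$, this gives $O_n^{(1)}=\langle C_n\cup\{\alpha^{\prime\prime}\}\rangle$.
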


\vspace{0.8cm}
Before proving this proposition, we need to prove several lemmas as follows.

\vspace{0.8cm}
Consider the semigroup \
$O_{n}^{(1)}=\{\alpha \in O_n : 1\alpha=1\},$
let
$J^*_{r}=\{\alpha \in O_{n}^{(1)} : |Im(\alpha)|=r\},$
where $1 \leq r \leq n-1$, and let $E_{n-1}$ the set of idempotents in $J^*_{n-1}$.

\vspace{0.8cm}
We will adopt the proof of Lemma 6.3.2 in \cite{6} to prove the following lemma
\vspace{0.1cm}
\begin{lemma}
 $J^*_{r}\subseteq \ <J^*_{n-1}>=O_{n}^{(1)}.$ 
\end{lemma}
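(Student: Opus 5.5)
The plan follows the strategy of the analogous lemma for $T_n^{(1)}$ (itself adapted from Lemma 6.3.2 in \cite{6}): show that every element of rank $r<n-1$ in $O_n^{(1)}$ factors as a product of an idempotent of rank $n-1$ and an element of rank $r+1$, both in $O_n^{(1)}$, and then induct downwards on $r$. Since $O_n^{(1)}\setminus\{1_{X_n}\}=\bigcup_{r=1}^{n-1}J^*_r$, this gives $O_n^{(1)}\setminus\{1_{X_n}\}\subseteq\langle J^*_{n-1}\rangle$. For $n\ge2$ the identity is in any case the only element of rank $n$ in $O_n^{(1)}$, and the equality is read, as in the $T_n$ case, up to this element.

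For the factorization, write $\alpha\in J^*_r$ with $r\le n-2$ as
$$\alpha=\begin{pmatrix} A_1&A_2&\cdots&A_r\\ 1&b_2&\cdots&b_r\end{pmatrix},\qquad 1<b_2<\cdots<b_r,$$
so that $A_1<A_2<\cdots<A_r$ are consecutive intervals and $1\in A_1$. The order-preserving condition forces more care than in $T_n^{(1)}$: the freed point must be an \emph{endpoint} of a block, and its new image must keep the map monotone.

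Since $r<n$, some block $A_j$ has at least two elements. Let $a=\max A_j$ and $a-1\in A_j$. Define the idempotent $\epsilon$ by $a\epsilon=a-1$ and $x\epsilon=x$ for $x\neq a$. This map is order-preserving, has rank $n-1$, and fixes $1$ because $a\ge2$. Then define $\beta$ to agree with $\alpha$ off $a$, and choose $a\beta=c$ to be a new value strictly between $b_j$ and $b_{j+1}$. For $j=r$, take $c$ strictly greater than $b_r$, with $c\le n$.

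This $\beta$ is order-preserving, fixes $1$ (since $a\ne1$), and has rank $r+1$. Moreover $\epsilon\beta=\alpha$, because $(a)\epsilon\beta=(a-1)\beta=b_j$.

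The \textbf{main obstacle} is the existence of $c$, since consecutive values $b_{j+1}=b_j+1$ leave no gap. To handle this, I will instead choose the image point first. Because $r\le n-2<n$, some value $c\notin Im(\alpha)$ with $c\ge2$ exists. Let $b_j<c<b_{j+1}$, with $b_{j+1}$ possibly absent.

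If $|A_j|\ge2$, free $\max A_j$ as above. Otherwise $A_j$ is a singleton, and I look for a neighbouring non-singleton block. Suppose $A_{j+1},\dots,A_{k-1}$ are singletons and $|A_k|\ge2$ for some $k>j$. Then I shift: $\beta$ sends $\min A_{j+1},\dots$ one step down the values, and the leftmost point of $A_k$ goes to $b_{k-1}$. Alternatively I use a product of several rank-$(n-1)$ idempotents, which is still in $\langle J^*_{n-1}\rangle$.

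A cleaner way to avoid this case analysis is to factor $\alpha=\beta\epsilon$ on the right instead, with $\epsilon\in E_{n-1}$ collapsing $c$ onto $b_j$ (or onto $b_{j+1}$). Then $\beta$ only needs to split some block $A_i$ so that one of its endpoints maps to $c$. Such a block exists: take the block $A_i$ adjacent to the value-gap containing $c$, whose image is $b_j$ or $b_{j+1}$, when it has at least two points. If it does not, use the fact that some block has at least two points and compose several collapsing idempotents. I would try this right-factor version first, checking that $1$ stays fixed because $c\ne1$ and $1\in A_1$ keeps image $1$.
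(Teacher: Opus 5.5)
\textbf{There is a genuine gap: the hard case is identified but not handled.} You found the real difficulty, and it is exactly where the paper's own proof fails: the paper simply posits a value $y\notin Im(\alpha)$ with $b_k<y<b_{k+1}$ next to the chosen non-singleton block $A_k$. Your proposal does not get past this either.

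The plan you announce, an idempotent of $E_{n-1}$ times an element of $J^*_{r+1}$ on either side, is impossible in general. Take $n=4$ and $\alpha=\begin{pmatrix}1&2&3&4\\1&1&1&2\end{pmatrix}\in J^*_2$. The elements of $E_3$ are the five maps moving a single point $x\neq 1$ to $x\pm 1$. Checking them shows there is no factorization $\alpha=\epsilon\beta$ or $\alpha=\beta\epsilon$ with $\epsilon\in E_3$ and $\beta\in J^*_3$.

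Your fallbacks do not cover this example:
\begin{enumerate}
\item Your shift is described only for a non-singleton $A_k$ with $k>j$, and you never say what the left factor is. Here the only non-singleton block is $A_1$, and every missing value lies above $b_2$, so there is no such $k$.
\item In the right-factor version, an element of $E_{n-1}$ moves one point only to an adjacent value. So this version needs a missing $c$ adjacent to some $b_i$ whose block $A_i$ is non-singleton, and that fails here.
\item ``Compose several collapsing idempotents'' is not an argument.
\end{enumerate}
So the case where no non-singleton block is value-adjacent to a missing value remains unproved.

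\textbf{The missing observation} is that the statement concerns $\langle J^*_{n-1}\rangle$, so the left factor need not be idempotent. Choose $c\notin Im(\alpha)$ with $c\geq 2$, let $b_j<c<b_{j+1}$ (or $c>b_r$ if $j=r$), and let $|A_k|\geq 2$.
\begin{itemize}
\item If $k\leq j$, put $t=\max A_k$ and $p=\max A_j$. Let $\gamma$ send $x\mapsto x-1$ for $t\leq x\leq p$ and fix every other point.
\item If $k>j$, put $t=\min A_k$ and $q=\min A_{j+1}\geq 2$. Let $\gamma$ send $x\mapsto x+1$ for $q\leq x\leq t$ and fix every other point.
\end{itemize}
In both cases $\gamma\in J^*_{n-1}$. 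It identifies only two points of $A_k$, and its image misses exactly $p$ (resp.\ $q$).

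Now set $y\beta=(y\gamma^{-1})\alpha$ for $y\in Im(\gamma)$, and $p\beta=c$ (resp.\ $q\beta=c$). The neighbours of the missing point are sent to $b_j$ and, if $j<r$, to $b_{j+1}$. Hence $\beta$ is order-preserving, fixes $1$, has rank $r+1$, and satisfies $\gamma\beta=\alpha$. Downward induction on $r$ then finishes the proof. In the example this gives $\alpha=\begin{pmatrix}1&2&3&4\\1&2&2&3\end{pmatrix}\begin{pmatrix}1&2&3&4\\1&1&2&3\end{pmatrix}$. If you want idempotent generation, $\gamma$ is itself a product of collapsing idempotents from $E_{n-1}$.

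Your remark that the identity has to be excluded from $\langle J^*_{n-1}\rangle=O_n^{(1)}$ is correct.
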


\begin{proof}
Let

$$\alpha=
\begin{pmatrix}  A_1&A_2&A_3...&A_r\\1&b_2&b_3...&b_r
\end{pmatrix}
\in J^*_r.$$\\[5pt]
and let $b_i\alpha^{-1}=A_i \,\,\, (i=2,3,...,r)$, $ 1\alpha^{-1}=A_1$ and $1\in A_1$. Since not all of the sets $A_i$ are singletons, we may assume without loss of generality that $A_k=\{s,t\}$, where $t=max(A_k)$. For all $x\in X_n$, we define $\epsilon: X_n \rightarrow X_n$ by

$$t\epsilon=t-1, \ \ \ \ x\epsilon=x \ (x \neq t),$$ 
and define $\beta: X_n \rightarrow X_n$ by
$$t\beta=y \ \ (y \notin Im(\alpha) \ \text{where} \ b_k<y<b_{k+1}), \ \ \ \ \ x\beta=b_k \ \ (x \in A_k\setminus \{t\}), \ \ \ \ \ x\beta=b_i \ \ (x \in A_i, \ i<k \ \text{or} \ i>k).$$
Then $\epsilon \in E_{n-1}$ and $|Im(\beta)|=|Im(\alpha)|+1$, so $\beta \in J^*_{r+1}$. Now we have that

 $$t\epsilon\beta=b_k, \ \ \ \ \ x\epsilon\beta=b_k \ \ (x \in A_k\setminus \{t\}), \ \ \ \ \ x\epsilon\beta=b_i \ \ (x \in A_i, \ i<k \ \text{or} \ i>k).$$

Thus for all $x\in X_n$, $x\epsilon\beta=x\alpha$, and so $\alpha=\epsilon\beta$.\\
\end{proof}

As a consequence of this lemma, we deduce that $O^{(1)}_n=<J^*_{n-1}>$.

\begin{lemma}
  $J^*_{n-1}\subseteq \ <C_n \bigcup \{\alpha^{\prime\prime}\}>$.
\end{lemma}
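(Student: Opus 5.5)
The plan is to show that $J^*_{n-1}$ is generated by its idempotents $E_{n-1}$, and then to show that every idempotent in $E_{n-1}$ already lies in $<C_n\bigcup\{\alpha^{\prime\prime}\}>$. Since the target is only a subsemigroup, products are then automatically handled.

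\emph{Step 1: describe $E_{n-1}$.} An element of $J^*_{n-1}$ is order-preserving, fixes $1$, and has image $X_n\setminus\{m\}$ for some $m\ge 2$. Because it is order-preserving of rank $n-1$, exactly one consecutive pair $k,k+1$ is collapsed. The idempotents of this kind are of two types.
\begin{itemize}
\item The decreasing ones, $f_k: k+1\mapsto k$ with all other points fixed ($1\le k\le n-1$). These lie in $C_n$.
\item The increasing ones, $g_k: k\mapsto k+1$ with all other points fixed ($2\le k\le n-1$). The case $k=1$ is excluded since $1$ must be fixed.
\end{itemize}

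\emph{Step 2: products of idempotents.} Let $\gamma\in J^*_{n-1}$ collapse $k,k+1$ and miss $m$.
\begin{itemize}
\item If $m\le k$, then $\gamma$ fixes $x<m$, sends $x\mapsto x+1$ for $m\le x\le k$, and fixes every $x\ge k+1$. Composing left to right, $\gamma=g_kg_{k-1}\cdots g_m$: the point $k$ is moved by $g_k$ and then untouched, the point $k-1$ is moved only by $g_{k-1}$, and so on.
\item If $m>k+1$, then dually $\gamma=f_kf_{k+1}\cdots f_{m-1}$, a product of elements of $C_n$.
\item If $m=k+1$, then $\gamma=f_k$.
\end{itemize}
Hence $J^*_{n-1}\subseteq <E_{n-1}>$, and it remains only to obtain each $g_k$.

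\emph{Step 3 (main step): express $g_k=c_1\alpha^{\prime\prime}c_2$ with $c_1,c_2\in C_n$.} Fix $2\le k\le n-1$ and define the two maps as follows.
\begin{itemize}
\item $c_1$ fixes $1,\dots,k$, sends $k+1\mapsto k$, and sends $x\mapsto x-1$ for $x>k+1$.
\item $c_2$ sends $1,2\mapsto 1$, sends $y\mapsto y-1$ for $3\le y\le k$, and fixes every $y\ge k+1$.
\end{itemize}
Both maps are order-decreasing and order-preserving, so both lie in $C_n$. The checks of $g_k=c_1\alpha^{\prime\prime}c_2$ run as follows.
\begin{itemize}
\item $1\mapsto1\mapsto1\mapsto1$.
\item For $2\le x\le k-1$: $x\mapsto x\mapsto x+1\mapsto x$.
\item For $x=k$: $k\mapsto k\mapsto k+1\mapsto k+1$.
\item For $x=k+1$: $k+1\mapsto k\mapsto k+1\mapsto k+1$.
\item For $x>k+1$: $x\mapsto x-1\mapsto x\mapsto x$. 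This uses $x-1\le n-1$, so that $\alpha^{\prime\prime}$ adds one, which is the case here.
\end{itemize}
Thus $g_k$ coincides with $c_1\alpha^{\prime\prime}c_2$.

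The main obstacle is this last step: ensuring the sandwich does not disturb the points near $1$. The point is that $\alpha^{\prime\prime}$ fixes $1$ and does not shift it, which is why $c_2$ must send $2$ to $1$. I would verify the edge cases $k=2$ and $k=n-1$ separately, though the formulas above appear to cover them. Combining Steps 1–3 gives $J^*_{n-1}\subseteq <C_n\bigcup\{\alpha^{\prime\prime}\}>$.
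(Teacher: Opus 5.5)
Your proof is correct, and it takes a different route from the paper's. The paper treats only the idempotents of $J^*_{n-1}$. The decreasing ones $f_{u,u-1}$ lie in $C_n$. Each increasing one $g_{r,r+1}$ is obtained as a power $(f_{r+1,r}f_{n,r+1}\alpha^{\prime\prime}f_{r,2})^{t}$ with $t=\mathrm{lcm}(r-2,n-r-1)$, with analogous products for $r=2$ and $r=n-1$, by reading off the cycle structure of the product as in the $D_n$ case of Section 2. The paper then passes to all of $J^*_{n-1}$ without comment.

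You differ in two ways. Your Step 2 proves what the paper uses silently: every element of $J^*_{n-1}$ is a product $g_kg_{k-1}\cdots g_m$ or $f_kf_{k+1}\cdots f_{m-1}$ of these idempotents. Your Step 3 replaces the cycle and power argument by the single sandwich $g_k=c_1\alpha^{\prime\prime}c_2$.

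Beyond being shorter, your choice has a real advantage. The paper's auxiliary maps $f_{n,3}$ ($n\mapsto 3$), $f_{n,r+1}$ and $f_{r,2}$ (all other points fixed) are not order-preserving once $n\ge 5$; for example, $(n-1)f_{n,3}=n-1>3=nf_{n,3}$. So they are not in $C_n$. The displayed products, such as $(1,3,3,5,\dots,n,4)$, are not even in $O_n$, while $\langle C_n\cup\{\alpha^{\prime\prime}\}\rangle\subseteq O_n$. The paper's construction was carried over from the $D_n$ setting, where every $f_{u,v}$ with $v<u$ is available. Your $c_1$ and $c_2$ are genuinely order-decreasing and order-preserving, so your argument establishes the lemma for all $n$, where the paper's factorization as written does not.
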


\begin{proof}
Let $\alpha$ and $\alpha^{\prime\prime}$ defined by Proposition \ref{pcn},
$$\alpha=
\begin{pmatrix}  
1&2&3&4&...&n-1&n\\1&1&2&3&...&n-2&n-1
\end{pmatrix} \ \ \ \text{and} \ \ \ 
\alpha^{\prime\prime}=
\begin{pmatrix}  
1&2&3&4&...&n-1&n\\1&3&4&5&...&n&n
\end{pmatrix}$$
and let $\alpha^{\prime}$ be a bijection from $\{2,3,...,n-1\}$ to $\{3,4,...,n\}$, also $min(1\alpha^{-1})=1$, $min(x\alpha^{-1})=x+1 \ (2\leq x <n$), and \ $n\alpha^{\prime}=n$, so
$\alpha^{\prime}=\alpha^{\prime\prime}.$
Now let $f_{u,u-1}$ be the decreasing idempotent maps in $J^*_{n-1}$, then
$$f_{u,u-1}=
\begin{pmatrix}  
1&2&...&u-1&u&u+1&...&n\\1&2&...&u-1&u-1&u+1&...&n
\end{pmatrix}, \ \ (1\leq u-1 <u \leq n).$$
We see that
$f_{u,u-1} \in \ <C_n \bigcup \{\alpha^{\prime\prime}\}>.$
Let $g_{r,r+1}$ be the increasing idempotent maps in $J^*_{n-1}$, then
$$g_{r,r+1}=
\begin{pmatrix}  
1&2&...&r&r+1&r+2&...&n\\1&2&...&r+1&r+1&r+2&...&n
\end{pmatrix}, \ \ (1<r<r+1 \leq n).$$
We want to generate $g_{r,r+1}$ by some $f_{u,u-1}$ and
$\alpha^{\prime\prime}$,
and we will do that by considering two cases.

\vspace{0.2cm}
Case (1) If $r=2$, then
$$f_{3,2} f_{n,3} \alpha^{\prime\prime}=
\begin{pmatrix}  
1&2&3&4&...&n-1&n\\1&3&3&5&...&n&4
\end{pmatrix}$$
have cycles of lengths $1$, $1$ and $n-3$, $( \{1,1\}, \{3,3\},$ and $\{4,5,...,n-1,n\},$ resp.). Since $2$ linked to the cycle $\{3,3\}$ of length $1$, then $f_{3,2} f_{n,3} \alpha^{\prime\prime}(2)=
(f_{3,2} f_{n,3} \alpha^{\prime\prime})^{lcm(1,n-3)}(2)=3$, and so $(f_{3,2} f_{n,3} \alpha^{\prime\prime})^{(n-3)}=g_{2,3}.$

\vspace{0.2cm}
Case (2) If $r>2$, then $g_{r,r+1}$ has two cases depending on $r$.

\vspace{0.2cm}
For $2<r<n-1$, then
$$f_{r+1,r}f_{n,r+1}\alpha^{\prime\prime}f_{r,2}=
\setcounter{MaxMatrixCols}{14}
\begin{pmatrix}
1&2&...&r-1&r&r+1&r+2&...&n-1&n\\1&3&...&2&r+1&r+1&r+3&...&n&r+2
\end{pmatrix}$$
have cycles of lengths $1$, $r-2$, $1$ and $n-r-1$, $( \{1,1\}, \{2,3,...,r-1\}, \{r+1,r+1\}$ and $\{r+2,r+3...,n-1,n\}$, resp.). Since $r$ linked
to the cycle $\{r+1,r+1\}$ of length $1$, then $f_{r+1,r}f_{n,r+1}\alpha^{\prime\prime}f_{r,2}(r)=(f_{r+1,r}f_{n,r+1}\alpha^{\prime\prime}f_{r,2})^{lcm(r-2,n-r-1)}(r)=r+1,$ and so $(f_{r+1,r}f_{n,r+1}\alpha^{\prime\prime}f_{r,2})^{lcm(r-2,n-r-1)}=g_{r,r+1}.$

\vspace{0.2cm}
For $r=n-1$, then
$$f_{n,n-1} \alpha^{\prime\prime}f_{n-1,2}=
\begin{pmatrix}  
1&2&3&...&n-2&n-1&n\\1&3&4&...&2&n&n
\end{pmatrix}$$
have cycles of lengths $1$, $n-3$, and $1$,  $( \{1,1\}, \{2,3,4,...,n-2\},$ and $\{n,n\}$, resp.). Since $n-1$ linked to the cycle $\{n,n\}$ of length $1$, then $f_{n,n-1} \alpha^{\prime\prime}f_{n-1,2}(n-1)=(f_{n,n-1} \alpha^{\prime\prime}f_{n-1,2})^{lcm(1,n-3)}(n-1)=n$, and so $(f_{n,n-1} \alpha^{\prime\prime}f_{n-1,2})^{(n-3)}=g_{n-1,n}$.

From case (1) and case (2), we conclude that 
\begin{equation*}
    g_{r,r+1}= \begin{cases}
     (f_{3,2}f_{n,3}\alpha^{\prime\prime})^{t} & \text{if } r=2\\
    (f_{r+1,r}f_{n,r+1}\alpha^{\prime\prime}f_{r,2})^{t} & \text{if } r>2
    \end{cases}
\end{equation*}
where $t=lcm(r-2,n-(r+1))$ and $r \neq n$. Therefore, $J^*_{n-1}\subseteq \ <C_n \bigcup \{\alpha^{\prime\prime}\}>$.
\end{proof}

\begin{lemma}
  Let $\beta \in T_n$ be such that $1\beta>1$. Then $\beta \notin <C_n \bigcup \{{\alpha^{\prime\prime}}\}>$.
\end{lemma}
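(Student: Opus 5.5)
The plan is to show that every element of $C_n \cup \{\alpha^{\prime\prime}\}$ fixes the point $1$, and that this property survives composition. The set of maps fixing $1$ is then a subsemigroup containing the generators, so it contains $<C_n \bigcup \{\alpha^{\prime\prime}\}>$. Any $\beta$ with $1\beta>1$ lies outside it, which gives the claim.

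First I check the generators. Every $\gamma \in C_n$ is order-decreasing, so $1\gamma \le 1$. Since $1\gamma \in X_n$, this forces $1\gamma = 1$. For $\alpha^{\prime\prime}$, the definition in Proposition \ref{pcn} sets $1\alpha^{\prime\prime}=1$ directly.

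Next I check closure. Let $\gamma_1,\dots,\gamma_k \in C_n \cup \{\alpha^{\prime\prime}\}$. By induction on $k$,
$$1\gamma_1\gamma_2\cdots\gamma_k = 1\gamma_2\cdots\gamma_k = \cdots = 1.$$
So every element of $<C_n \bigcup \{\alpha^{\prime\prime}\}>$ maps $1$ to $1$. If $1\beta>1$, then $\beta$ cannot be such a product, and therefore $\beta \notin <C_n \bigcup \{\alpha^{\prime\prime}\}>$.

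There is no real obstacle here. The only point needing care is the observation that an order-decreasing map must fix the least element $1$ of the chain, and this follows immediately from $1\gamma \le 1$.
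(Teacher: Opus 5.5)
Your proof is correct and follows the same approach as the paper, whose one-line proof likewise observes that every element of $C_n$ and $\alpha^{\prime\prime}$ fixes $1$. You simply make explicit the two steps the paper leaves implicit: an order-decreasing map must fix the least element $1$, and maps fixing $1$ are closed under composition, so every element of the generated semigroup fixes $1$.
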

\begin{proof}
   Let $\beta \in T_n$. Since $\forall \ \alpha\in C_n$, $1\alpha=1$ and $1\alpha^{\prime\prime}=1$, then $\beta \notin <C_n \bigcup \{{\alpha^{\prime\prime}}\}>$.\\
\end{proof}

\begin{corollary}\label{ccn}
  $<C_n \bigcup \{{\alpha^{\prime\prime}}\}> \ =O_{n}^{(1)}$.  
\end{corollary}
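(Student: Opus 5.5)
The corollary asserts $<C_n \cup \{\alpha^{\prime\prime}\}> = O_n^{(1)}$. I will prove the two inclusions separately, as was done for Corollary \ref{nmn} in the $D_n$ case.

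\textbf{The inclusion $\subseteq$.} First, every generator lies in $O_n^{(1)}$. Each $\gamma \in C_n$ is order-preserving and order-decreasing, so $1\gamma \le 1$ forces $1\gamma = 1$; hence $C_n \subseteq O_n^{(1)}$. The map $\alpha^{\prime\prime}$ fixes $1$, and the sequence $1,3,4,\dots,n,n$ of its values is non-decreasing, so $\alpha^{\prime\prime} \in O_n^{(1)}$. Since $O_n^{(1)}$ is the intersection of the subsemigroup $O_n$ with the stabiliser of $1$, it is itself a subsemigroup of $T_n$. It therefore contains $<C_n \cup \{\alpha^{\prime\prime}\}>$. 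The preceding lemma, that anything with $1\beta > 1$ lies outside the generated semigroup, is just this observation restricted to the fixed point $1$.

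\textbf{The inclusion $\supseteq$.} Here I use the two lemmas just proved. The lemma adapted from Howie gives $J^*_r \subseteq <J^*_{n-1}>$ for every $r \le n-1$. The identity map is in $C_n$, which covers rank $n$. Together these give $O_n^{(1)} = <J^*_{n-1}> \cup \{1_{X_n}\}$, which is the statement $O_n^{(1)} = <J^*_{n-1}>$ up to the identity. The preceding lemma gives $J^*_{n-1} \subseteq <C_n \cup \{\alpha^{\prime\prime}\}>$, and so $<J^*_{n-1}> \subseteq <C_n \cup \{\alpha^{\prime\prime}\}>$. Hence $O_n^{(1)} \subseteq <C_n \cup \{\alpha^{\prime\prime}\}>$.

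\textbf{The main obstacle.} The weak point is the step $J^*_{n-1} \subseteq <C_n \cup \{\alpha^{\prime\prime}\}>$. The lemma only produces the idempotents $f_{u,u-1}$ and $g_{r,r+1}$, whereas $J^*_{n-1}$ in $O_n^{(1)}$ also contains non-idempotent maps of rank $n-1$. A typical example is a map that shifts a whole interval up by one. To cover them, I would show that every element of $J^*_{n-1}$ is a product of these adjacent idempotents, in analogy with Howie's generation of $O_n$ by its rank-$(n-1)$ idempotents. An order-preserving map in $O_n^{(1)}$ of rank $n-1$ is determined by two things: the unique collapsed pair $\{k, k+1\}$, and the unique missing value $m \ge 2$. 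If $m > k+1$, I realise it as
\[
f_{k+1,k}\, g_{m-1,m}\, g_{m-2,m-1} \cdots g_{k+1,k+2}.
\]
The case $m < k$ is handled symmetrically, using a product of $f_{j,j-1}$. If these composites are checked and the boundary cases $k = 1$ and $m = n$ are handled, the corollary follows.
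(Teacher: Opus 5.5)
Your two-inclusion argument is the paper's own: the corollary is just the combination of the three preceding lemmas. The inclusion $\subseteq$ is fine, and you are right that the paper's proof of $J^*_{n-1}\subseteq\langle C_n\cup\{\alpha^{\prime\prime}\}\rangle$ only handles the idempotents $f_{u,u-1}$ and $g_{r,r+1}$. Your repair, however, has the two cases the wrong way round.

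Use the paper's left-to-right composition. The element of $J^*_{n-1}$ with collapsed pair $\{k,k+1\}$ and missing value $m>k+1$ is the down-shift: it sends $k+1\mapsto k$ and $x\mapsto x-1$ for $k+2\le x\le m$, and fixes everything else. It is order-decreasing, so it already lies in $C_n$ and needs no factorization. Your product $f_{k+1,k}\,g_{m-1,m}\cdots g_{k+1,k+2}$ is not this map: for $n=5$, $k=1$, $m=4$ it has values $1,1,4,4,5$ and rank $3$. For $m<k$ the element is the up-shift $x\mapsto x+1$ on $[m,k]$. It is not order-decreasing, so it can never be a product of the $f_{j,j-1}$. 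It equals $g_{k,k+1}g_{k-1,k}\cdots g_{m,m+1}$, exactly as $\alpha^{\prime\prime}=\epsilon_{n-1}\cdots\epsilon_{2}$ in the paper's idempotent-generation lemma.

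Even with that correction, you would still rely on the paper's derivation of the $g_{r,r+1}$, and that derivation is not valid. For $n\ge 5$ it uses maps such as $f_{n,3}$ and $f_{r,2}$ ($r\ge 4$), which are not order-preserving. The displayed products, e.g.\ the one with values $1,3,3,5,\dots,n,4$, are not in $O_n$, so they cannot lie in $\langle C_n\cup\{\alpha^{\prime\prime}\}\rangle\subseteq O_n^{(1)}$.

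A direct argument covers all of $J^*_{n-1}$ at once. Let $\gamma_k\in C_n$ fix $1,\dots,k$ and send $x\mapsto x-1$ for $x>k$. Let $\delta_m\in C_n$ fix $1$ and every $x>m$, and send $x\mapsto x-1$ for $2\le x\le m$. Then $\gamma_k\alpha^{\prime\prime}$ is the up-shift on $[2,k]$. Hence $\gamma_k\alpha^{\prime\prime}\delta_m$ is the up-shift on $[m,k]$ for $2\le m\le k\le n-1$, with $m=k$ giving $g_{k,k+1}$. So $J^*_{n-1}\subseteq C_n\cup C_n\alpha^{\prime\prime}C_n$.

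The Howie-type lemma you quote is true, but its proof has a similar problem. It needs a non-singleton block $A_k$ with some $y\notin Im(\alpha)$ satisfying $b_k<y<b_{k+1}$. No such block exists, e.g., for the map with values $1,1,1,2,4$ in $O_5^{(1)}$.

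You can avoid that lemma altogether. Write $\alpha\in O_n^{(1)}$ as $\delta\sigma$, where:
\begin{itemize}
\item $\delta\in C_n$ sends the $i$-th block of $\alpha$ to $i$;
\item $\sigma$ sends $i$ to the $i$-th smallest element of $Im(\alpha)$ for $i\le|Im(\alpha)|$, and sends $x$ to $\max(x,\max Im(\alpha))$ otherwise.
\end{itemize}
Then $\sigma$ is order-preserving, order-increasing and fixes $1$. By the dual, on $\{2,\dots,n\}$, of the generation of the Catalan monoid by its adjacent idempotents, $\sigma$ is a product of the $g_{r,r+1}$. Combined with $g_{r,r+1}\in C_n\alpha^{\prime\prime}C_n$, this gives $O_n^{(1)}\subseteq\langle C_n\cup\{\alpha^{\prime\prime}\}\rangle$ directly.
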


\vspace{0.5cm}
Now to prove Proposition \ref{pcn}, we observe that every elements of $Dom_{T_n}(C_n)$ belong to $O_{n}^{(1)}$, then by Corollary \ref{ccn} we have that 
$$Dom_{T_n}(C_n) \subseteq \ <C_n \bigcup \{{\alpha^{\prime\prime}}\}>.$$

\vspace{0.5cm}
For the second inclusion, we want to prove that $\alpha\alpha^{\prime\prime}\alpha, \ \alpha\alpha^{\prime\prime},\ \alpha^{\prime\prime}\alpha$ and $\alpha^{\prime\prime}$ in $Dom_{T_n}(C_n)$. 
In the next four cases, we assume that $x,y \in X_n$ and $x\leq y$.

\vspace{0.3cm}
 For $\alpha\alpha^{\prime\prime}$ and $\alpha\alpha^{\prime\prime}\alpha$.
 
If $x=1$ and $y=2$, then $1\alpha\alpha^{\prime\prime}=1\alpha^{\prime\prime}=1$ and $2\alpha\alpha^{\prime\prime}=1\alpha^{\prime\prime}=1$.\\
If $x=1$ and $y>2$, then $1\alpha\alpha^{\prime\prime}=1$ and $y\alpha\alpha^{\prime\prime}=(y-1)\alpha^{\prime\prime}=y$.\\
If $x=2$ and $y>2$, then $2\alpha\alpha^{\prime\prime}=1$ and $y\alpha\alpha^{\prime\prime}=y$.\\
If $x>2$ and $y>2$, then $x\alpha\alpha^{\prime\prime}=x$ and $y\alpha\alpha^{\prime\prime}=y$.\\
Then $x\alpha\alpha^{\prime\prime} \leq y\alpha\alpha^{\prime\prime}$.\\
Also,\\
If $x=1$ and $y=2$, then $1\alpha\alpha^{\prime\prime}\alpha=1\alpha=1$ and $2\alpha\alpha^{\prime\prime}\alpha=1\alpha=1$.\\
If $x=1$ and $y>2$, then $1\alpha\alpha^{\prime\prime}\alpha=1$ and $y\alpha\alpha^{\prime\prime}\alpha=y\alpha=y-1$.\\
If $x=2$ and $y>2$, then $2\alpha\alpha^{\prime\prime}\alpha=1$ and $y\alpha\alpha^{\prime\prime}\alpha=y-1$.\\
If $x>2$ and $y>2$, then $x\alpha\alpha^{\prime\prime}\alpha=x-1$ and $y\alpha\alpha^{\prime\prime}\alpha=y-1$.\\
 Then $x\alpha\alpha^{\prime\prime}\alpha \leq y\alpha\alpha^{\prime\prime}\alpha$, so by Proposition \ref{ndn}, $\alpha\alpha^{\prime\prime}, \alpha\alpha^{\prime\prime}\alpha \in O_n \cap D_n=C_n \subseteq Dom_{T_{n}}(C_n)$.

\vspace{0.3cm}
For $\alpha^{\prime\prime}\alpha$ and $\alpha^{\prime\prime}$.\\
If $x=1$ and $y=n$, then $1\alpha^{\prime\prime}\alpha=1\alpha=1$ and $n\alpha^{\prime\prime}\alpha=n\alpha=n-1$.\\
If $x=1$ and $1<y<n$, then $1\alpha^{\prime\prime}\alpha=1$ and $y\alpha^{\prime\prime}\alpha=(y+1)\alpha=y$.\\
If $1<x<n$ and $y=n$, then $x\alpha^{\prime\prime}\alpha=x$ and $n\alpha^{\prime\prime}\alpha=n-1$.\\
If $1<x<n$ and $1<y<n$, then $x\alpha^{\prime\prime}\alpha=x$ and $y\alpha^{\prime\prime}\alpha=y$.\\
Then $x\alpha^{\prime\prime}\alpha \leq y\alpha^{\prime\prime}\alpha$, so by Proposition \ref{ndn}, $\alpha^{\prime\prime}\alpha \in O_n \cap D_n= C_n \subseteq Dom_{T_{n}}(C_n)$.\\
Also,\\
if $x=1$, then $1\alpha^{\prime\prime}\alpha\alpha^{\prime\prime}=1\alpha^{\prime\prime}$.\\
if $x=n$, then 
$n\alpha^{\prime\prime}\alpha\alpha^{\prime\prime}=(n-1)\alpha^{\prime\prime}=n=n\alpha^{\prime\prime}$.\\
if $1<x<n$, then 
$x\alpha^{\prime\prime}\alpha\alpha^{\prime\prime}=x\alpha^{\prime\prime}$.\\
Then $\alpha^{\prime\prime}=\alpha^{\prime\prime}\alpha\alpha^{\prime\prime}$, so by Proposition \ref{coni}, $\alpha^{\prime\prime}\in Dom_{T_{n}}(C_n)$. Thus 

$$<C_n \bigcup \{{\alpha^{\prime\prime}}\}> \ \subseteq Dom_{T_{n}}(C_n).$$\\[10pt]
From the above, we deduce that $Dom_{T_n}(C_n)= \ <C_n \bigcup \{{\alpha^{\prime\prime}}\}> \ =O_{n}^{(1)}$.

Now we want to prove that $Dom_{T_{n}}(C_n)$ is regular. Let $\alpha \in O^{(1)}_n$ and define a mapping $\beta:X_n \rightarrow X_n$ as follows:  
    \begin{equation*}
    x\beta= \begin{cases}
     min(x\alpha^{-1}) & \text{if} \ \ x\in Im(\alpha) \\
    min((z\alpha)\alpha^{-1}) \ \text{such that} \ z\alpha=max\{m \in Im(\alpha): m < x\}\ & \text{if} \ \ x\notin Im(\alpha)
    \end{cases}
\end{equation*}
Then for all $x \in X_n$,
$$x\alpha\beta\alpha=min((x\alpha)\alpha^{-1})\alpha=x\alpha,$$
it follows that $\alpha\beta\alpha=\alpha$. Let $x,y \in X_n$ and $x \leq y$, we consider four cases.  Let
  $$ z_1\alpha=max\{m \in Im(\alpha): m<x\} \ \ \ \text{and} \ \ \ z_2\alpha=max\{m \in Im(\alpha): m<y\}.$$

Case (1) If $x, y \in Im(\alpha).$
Since $\alpha \in O_n$, then $max(x\alpha^{-1}) \leq min(y\alpha^{-1})$, so $$x\beta=min(x\alpha^{-1}) \leq min(y\alpha^{-1})=y \beta.$$

\vspace{0.1cm}
Case (2) If $x \notin Im(\alpha)$ but $y \in Im(\alpha)$, then $x\beta=min((z_1\alpha)\alpha^{-1})$ and $y\beta=min(y\alpha^{-1})$. Since $z_1\alpha < x \leq y$, then $z_1\alpha < y$, so $$x\beta=min((z_1\alpha)\alpha^{-1})<min(y\alpha^{-1})=y\beta.$$

\vspace{0.1cm}
Case (3) If $x \in Im(\alpha)$ but $y \notin Im(\alpha)$, then
$x\beta=min(x\alpha^{-1})$ and $y\beta=min((z_2\alpha)\alpha^{-1})$. Since $x \leq y$ and $z_2\alpha < y$ but $z_2\alpha$ is the maximum, then $$x\beta=min(x\alpha^{-1}) \leq min((z_2\alpha)\alpha^{-1})=y\beta.$$

\vspace{0.1cm}
Case (4) If $x , y \notin Im(\alpha)$, then $x\beta=min((z_1\alpha)\alpha^{-1})$ and $y\beta=min((z_2\alpha)\alpha^{-1})$. Since $z_1\alpha < x \leq y$ and $z_2\alpha < y$, but $z_2\alpha$ is the maximum, then $$x\beta=min((z_1\alpha)\alpha^{-1}) \leq min((z_2\alpha)\alpha^{-1})=y\beta.$$
Thus $x\beta \leq y\beta$, so $\beta \in O_n$. Since $1 \in Im(\alpha)$, then $1 \beta=min(1\alpha^{-1})=1$, so $\beta \in O_n^{(1)}$. Hence $O_n^{(1)}$ is regular.

\vspace{0.3cm}
\begin{corollary}
   $Dom_{T_{n}}(C_n)=Dom_{T_{n}}(O_n) \cap Dom_{T_{n}}(D_n).$
\end{corollary}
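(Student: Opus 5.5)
The plan is to reduce the corollary to three descriptions already obtained and then verify a set-theoretic identity. From the proof of Proposition \ref{ndn} we have $Dom_{T_n}(D_n)=T_n^{(1)}=\{\alpha\in T_n: 1\alpha=1\}$. From Corollary \ref{ccn} and the proof of Proposition \ref{pcn} we have $Dom_{T_n}(C_n)=O_n^{(1)}=\{\alpha\in O_n: 1\alpha=1\}$. Finally, $O_n$ was shown to be closed in $T_n$, so $Dom_{T_n}(O_n)=O_n$. Substituting these, the claim becomes
$$O_n^{(1)} = O_n \cap T_n^{(1)}.$$

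This identity is immediate from the definitions. An element of the right-hand side is an order-preserving full transformation with $1\alpha=1$, which is exactly the definition of $O_n^{(1)}$. Both inclusions therefore hold trivially, and I would write out both for completeness.

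The only genuine point of care is that the corollary rests on the two equalities $Dom_{T_n}(D_n)=T_n^{(1)}$ and $Dom_{T_n}(C_n)=O_n^{(1)}$. I will take these, together with the closedness of $O_n$, as established earlier in the paper. In particular I assume the inclusion $Dom_{T_n}(C_n)\subseteq O_n^{(1)}$, which the paper justifies by noting that every dominated element fixes $1$ and is order-preserving.

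As a sanity check that avoids the explicit descriptions, one inclusion also follows from monotonicity of dominions. Since $C_n\subseteq D_n$ and $C_n\subseteq O_n$, we get $Dom_{T_n}(C_n)\subseteq Dom_{T_n}(D_n)$ and $Dom_{T_n}(C_n)\subseteq Dom_{T_n}(O_n)$, because any pair of homomorphisms agreeing on the larger subsemigroup also agrees on $C_n$. The reverse inclusion is where the explicit descriptions are actually needed.
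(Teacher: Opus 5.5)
Your argument is the one the paper intends: it states the corollary without proof, immediately after establishing $Dom_{T_n}(C_n)=O_n^{(1)}$, as a direct consequence of $Dom_{T_n}(O_n)=O_n$, $Dom_{T_n}(D_n)=T_n^{(1)}$ and $O_n\cap T_n^{(1)}=O_n^{(1)}$. One small correction: read literally, $\{\alpha\in T_n: 1\alpha=1\}$ also contains the non-identity permutations fixing $1$, and these lie neither in $\langle D_n\cup\{\alpha''\}\rangle$ nor in $Dom_{T_n}(D_n)$ (compare the paper's count $n^{n-1}-(n-1)!+1$), but the conclusion is unaffected, because $O_n\cap S_n=\{1\}$ and, given your monotonicity remark, only the true inclusion $Dom_{T_n}(D_n)\subseteq T_n^{(1)}$ is actually needed.
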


\vspace{0.5cm}
\begin{lemma}  
$Dom_{T_n}(C_n)$ is idempotent-generated.
\end{lemma}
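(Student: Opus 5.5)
Since $Dom_{T_n}(C_n)=\langle C_n\cup\{\alpha''\}\rangle=O_n^{(1)}$ by Proposition \ref{pcn} and Corollary \ref{ccn}, and $C_n$ is idempotent-generated by Higgins \cite{HI}, we only need to write $\alpha''$ as a product of idempotents lying in $O_n^{(1)}$. We follow the approach of the corresponding lemma for $D_n$. There the factorization $\alpha''=\epsilon_{n-1}\epsilon_{n-2}\cdots\epsilon_2$ was used, with $i\epsilon_i=i+1$ and $x\epsilon_i=x$ for $x\neq i$. So the first step is to check that these $\epsilon_i$ ($2\le i\le n-1$) lie in $O_n^{(1)}$.

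Each $\epsilon_i$ is an idempotent, because its only non-stationary block $\{i,i+1\}$ is sent to $i+1$, which lies in the block. It fixes $1$ since $i\ge 2$. It is order-preserving: the images of $1,\dots,n$ are $1,\dots,i-1,i+1,i+1,i+2,\dots,n$, a non-decreasing sequence. In fact $\epsilon_i=g_{i,i+1}$ in the notation of the previous lemma.

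The second step is to verify the product in the stated order; maps compose left to right. Take $2\le x\le n-1$. The factors $\epsilon_{n-1},\dots,\epsilon_{x+1}$ fix $x$. Then $\epsilon_x$ sends $x$ to $x+1$. The remaining factors $\epsilon_{x-1},\dots,\epsilon_2$ have indices $<x$, so they fix $x+1$. Hence $x\mapsto x+1$.

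For $x=n$, every factor fixes $n$, so $n\mapsto n$. For $x=1$, every factor fixes $1$, so $1\mapsto 1$. Thus the product agrees with $\alpha''$ at every point.

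This shows $\alpha''\in\langle E(O_n^{(1)})\rangle$. Combined with $C_n=\langle E(C_n)\rangle$ and $E(C_n)\subseteq E(O_n^{(1)})$, we get $Dom_{T_n}(C_n)=\langle E(Dom_{T_n}(C_n))\rangle$.

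The argument has no real obstacle. The one point requiring care is the order of the factors: the product $\epsilon_2\cdots\epsilon_{n-1}$ taken in increasing order would send $2$ all the way to $n$. The decreasing order $\epsilon_{n-1}\cdots\epsilon_2$ is essential under the paper's right-action convention.
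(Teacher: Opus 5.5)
Your proposal is correct and is essentially the paper's proof. It uses the same reduction to $\alpha^{\prime\prime}$: Higgins' theorem that $C_n$ is idempotent-generated, together with $Dom_{T_n}(C_n)=\langle C_n\cup\{\alpha^{\prime\prime}\}\rangle=O_n^{(1)}$, and the same factorization $\alpha^{\prime\prime}=\epsilon_{n-1}\epsilon_{n-2}\cdots\epsilon_2$. You also supply the checks the paper leaves as ``not difficult to see'': idempotency and order-preservation of each $\epsilon_i$, the pointwise computation of the product, and why the order of the factors matters.
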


\begin{proof}
  As we known that $C_n$ is idempotent-generated \cite{HI},
and 
$\alpha^{\prime\prime}=
\begin{pmatrix}  
1&2&3&4&...&n-1&n\\1&3&4&5&...&n&n
\end{pmatrix} \notin C_n.$
 We need to show that $\alpha^{\prime\prime}$ is a product of idempotents in $O_n^{(1)}.$ For all $x \in X_n$ we define $\epsilon_{i}: X_n \rightarrow X_n$, where $i=2,3,4,...,n-1$, by
$$ i\epsilon_{i}=i+1, \ \ \ \ \ \ \ \ \ x\epsilon_{i}=x \ \ (x \neq i).$$
Then $\epsilon_{i}$ are idempotents in $O_n^{(1)}$. Now let $\beta=\epsilon_{n-1}\epsilon_{n-2}...\epsilon_{3}\epsilon_{2}$, then
$$1\beta=1, \ \ \ \ \ \ \ \ \  \ \ \ x\beta=x+1 \ \ (2 \leq x \leq n-1), \ \ \ \ \ \ \ \ \ \ \ \ n\beta=n,$$
it is not difficult to see that 
$$\alpha^{\prime\prime}=\epsilon_{n-1}\epsilon_{n-2}...\epsilon_{3}\epsilon_{2},$$
and so $Dom_{T_n}(C_n)=<C_n \bigcup \{\ \alpha^{\prime\prime} \}>$ is idempotent-generated.\\
\end{proof}

\vspace{0.3cm}
For natural numbers $k, m, n, p$ and $r$ we have

\vspace{0.3cm}
\begin{result} \cite{ria} \label{one}
    $\sum_{k=0}^{n}\binom{n}{m-k}\binom{p}{k}=\binom{n+p}{m}.$
\end{result}

\begin{result} \cite{ria} \label{three}
  $\sum_{m=r}^{n}\binom{m}{r}\binom{n+k-m}{k}=\binom{n+k+1}{r+k+1}.$
\end{result}

\begin{result} \cite{lupt} \label{five}
   $\sum_{k=r}^{n}\binom{k-1}{r-1}=\binom{n}{r}.$ 
\end{result}

\vspace{0.5cm}
Now we give a formula to find the number of elements and idempotents of dominion $C_n$ in $T_n$. The next results are similar to the results in \cite{nrk} with some essential differences. For $1 \leq r \leq k \leq n$, we define

\begin{equation}
 J^{(1)}(n,r,k)=|\{\alpha \in O_n^{(1)} : |Im(\alpha)|=r \ \wedge \ max(Im(\alpha))=k\}|.   \label{lh9}
\end{equation}
Then we have

\vspace{0.2cm}
\begin{proposition}
    Let $J^{(1)}(n,r,k)$ be ss defined in \eqref{lh9}. Then
    $$J^{(1)}(n,r,k)=\binom{k-2}{r-2}\binom{n-1}{r-1}.$$
\end{proposition}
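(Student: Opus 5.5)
We count the maps $\alpha\in O_n^{(1)}$ with $|Im(\alpha)|=r$ and $max(Im(\alpha))=k$ by splitting each such map into two independent pieces of data: its image set and its kernel (the ordered partition of $X_n$ into blocks). Because $\alpha$ is order-preserving, the blocks $A_1,A_2,\dots,A_r$ are intervals of $X_n$, listed left to right. They are sent to the image elements $a_1<a_2<\dots<a_r$ in increasing order. Conversely, any choice of an $r$-element image set together with a partition of $X_n$ into $r$ consecutive nonempty intervals determines exactly one order-preserving map. So the count is the number of admissible image sets times the number of admissible interval partitions, once we check that the constraints separate in this way.

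\emph{Image sets.} We need $1\alpha=1$. Since $1\in A_1$, this forces $a_1=1$; conversely $a_1=1$ gives $1\alpha=1$. The condition $max(Im(\alpha))=k$ means $a_r=k$. For $r\ge 2$, the image is therefore $\{1,k\}\cup M$, where $M$ is any $(r-2)$-subset of $\{2,\dots,k-1\}$. That gives $\binom{k-2}{r-2}$ choices.

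\emph{Partitions.} A partition of the chain $X_n$ into $r$ nonempty consecutive intervals is fixed by choosing the $r-1$ positions $i\in\{1,\dots,n-1\}$ at which one block ends and the next begins. That gives $\binom{n-1}{r-1}$ choices. The condition $1\alpha=1$ puts no further restriction here, because it is already secured by $a_1=1$.

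Multiplying the two counts gives $J^{(1)}(n,r,k)=\binom{k-2}{r-2}\binom{n-1}{r-1}$.

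The only delicate point is the boundary case $r=1$. There $\alpha$ must be the constant map $1$, so we need $k=1$ and the count is $1$. This is consistent with the formula under the convention $\binom{-1}{-1}=1$ and $\binom{k-2}{-1}=0$ for $k\ge 2$. So either we state that convention, or we treat $r=1$ separately and restrict the main argument to $2\le r\le k$.

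I expect the main obstacle to be justifying cleanly that the correspondence between maps and the pair (image set, interval partition) is a bijection. Namely, that an order-preserving full map has interval kernel classes whose images increase. This follows directly from the definition of $O_n$, as in the proof of the first lemma of Section 3.
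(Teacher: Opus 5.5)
Your proposal is correct and matches the paper's proof: it also treats $r=1$ separately, counts the $\binom{n-1}{r-1}$ ways to cut $X_n$ into $r$ convex classes, chooses the $r-2$ intermediate image points from $\{2,\dots,k-1\}$ in $\binom{k-2}{r-2}$ ways, and observes that there is only one order-preserving way to attach them to the classes. Your explicit check that $1\alpha=1$ is equivalent to $\min(Im(\alpha))=1$, together with your remark on the binomial conventions at $r=1$, only makes explicit what the paper leaves implicit.
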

\begin{proof}
If $r=1$, then $k=1$ and so $J^{(1)}(n,1,1)=1$. Now for $r\geq 2$, since $|Im(\alpha)|=r$, we have to partition $X_n$ into $r$ convex classes, however, this can be done in $\binom{n-1}{r-1}$ ways, by inserting $r-1$ symbols between the $n-1$ spaces of $X_n$. Moreover, since $max(Im(\alpha))=k$, therefore we can choose the remaining $r-2$ elements of $Im(\alpha)$ from $\{2,3,...,k-1\}$ in $\binom{k-2}{r-2}$ ways, and there is only one way of tying them to the $r$ convex classes in an order-preserving fashion.\\
\end{proof}

\begin{corollary}
 Let $J^{(1)}(n,r)=\sum_{k=r}^{n}J^{(1)}(n,r,k)$ . Then $J^{(1)}(n,r)=\binom{n-1}{r-1}\binom{n-1}{r-1}.$
\end{corollary}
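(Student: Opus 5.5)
We need to prove $J^{(1)}(n,r)=\sum_{k=r}^{n}\binom{k-2}{r-2}\binom{n-1}{r-1}=\binom{n-1}{r-1}^2$.

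The plan is to sum the formula of the preceding proposition over $k$ and factor out the term that does not depend on $k$. For $r\ge 2$, the factor $\binom{n-1}{r-1}$ is independent of $k$, so
$$J^{(1)}(n,r)=\binom{n-1}{r-1}\sum_{k=r}^{n}\binom{k-2}{r-2}.$$
It then remains to evaluate the inner sum. Reindex with $j=k-1$, so that $j$ runs from $r-1$ to $n-1$, and write $r'=r-1$. The sum becomes $\sum_{j=r'}^{n-1}\binom{j-1}{r'-1}$. By Result \ref{five}, with $n$ replaced by $n-1$ and $r$ by $r'$, this equals $\binom{n-1}{r'}=\binom{n-1}{r-1}$. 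Substituting back gives $\binom{n-1}{r-1}^2$.

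The case $r=1$ has to be handled separately, since $\binom{k-2}{-1}$ is not covered by the proposition's formula. Here the only admissible value is $k=1$, and the proposition's proof gives $J^{(1)}(n,1,1)=1$. Note that $k=1<r$ does not occur in the sum for $r\ge2$. Since $\binom{n-1}{0}^2=1$, the formula also holds for $r=1$.

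The only delicate point is this boundary and indexing check: confirming that for $r\ge 2$ the range $k\ge r$ matches the lower limit required in Result \ref{five} after the shift, and that $r=1$ is treated on its own. Otherwise the argument is a direct computation.
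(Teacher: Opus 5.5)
Your proposal is correct and is the paper's proof: factor out $\binom{n-1}{r-1}$, shift $j=k-1$, and apply Result~\ref{five} with $n-1$ and $r-1$ in place of $n$ and $r$. Your separate treatment of $r=1$ is a worthwhile tidying of a boundary case the paper passes over, where Result~\ref{five} would be invoked with lower index $0$; there $J^{(1)}(n,1,k)=0$ for $k\ge 2$, since $1\alpha=1$ forces $Im(\alpha)=\{1\}$.
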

\begin{proof}
$J^{(1)}(n,r)=\sum_{k=r}^{n} \ J^{(1)}(n,r,k)=\binom{n-1}{r-1}\sum_{k=r}^{n} \ \binom{k-2}{r-2}.$\\[10pt]
Put $j=k-1$. If $k=r$, then $j=r-1$, if $k=n$, then $j=n-1$, so by Result \ref{five}

$$J^{(1)}(n,r)=\binom{n-1}{r-1}\sum_{j=r-1}^{n-1} \ \binom{j-1}{(r-1)-1}=\binom{n-1}{r-1}\binom{n-1}{r-1}.$$
\end{proof}

\vspace{0.5cm}
\begin{corollary}
    Let $G^{(1)}(n,k)=\sum_{r=1}^{k}J^{(1)}(n,r,k)$ . Then $G^{(1)}(n,k)=\binom{n+k-3}{k-1}.$
\end{corollary}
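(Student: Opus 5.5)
We sum the formula from the previous Proposition over $r$, writing
$$G^{(1)}(n,k)=\sum_{r=1}^{k}J^{(1)}(n,r,k)=\sum_{r=1}^{k}\binom{k-2}{r-2}\binom{n-1}{r-1},$$
and then collapse the sum with a Vandermonde-type identity, namely Result \ref{one}. First we dispose of the boundary cases. For $k=1$ the only admissible $r$ is $1$, and $J^{(1)}(n,1,1)=1=\binom{n-2}{0}$, which agrees with the claimed formula. For $k\ge 2$ the term $r=1$ vanishes, since $J^{(1)}(n,1,k)=0$ when $k>1$ (a rank-one map fixing $1$ is constant $1$). With the usual convention $\binom{k-2}{-1}=0$, this is also what the formula gives. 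So for $k\ge 2$ the sum effectively runs over $2\le r\le k$.

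Next we reindex so that Result \ref{one} applies. Put $j=r-1$, so that $j$ runs from $1$ to $k-1$, and rewrite the summand by symmetry as
$$\binom{k-2}{j-1}\binom{n-1}{j}=\binom{k-2}{(k-1)-j}\binom{n-1}{j}.$$
The sum then becomes $\sum_{j}\binom{k-2}{(k-1)-j}\binom{n-1}{j}$. This is exactly the left side of Result \ref{one} with the roles $n\mapsto k-2$, $m\mapsto k-1$, $p\mapsto n-1$, and summation index $j$. The term $j=0$ vanishes, because $\binom{k-2}{k-1}=0$, so it can be harmlessly added in. The range can likewise be extended to every $j$ for which the binomials are nonzero. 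Result \ref{one} therefore yields
$$G^{(1)}(n,k)=\binom{(k-2)+(n-1)}{k-1}=\binom{n+k-3}{k-1},$$
as required.

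The only real obstacle is bookkeeping. We must check that the summation limits in Result \ref{one} (stated as $k=0,\dots,n$) can be matched to our range after extending by zero terms. We must also confirm that the degenerate case $k=1$, or $r=1$ with $k\ge 2$, does not break the convention $\binom{-1}{-1}$ implicit in the previous Proposition's formula. For this reason we treat $k=1$ separately as above, rather than relying on that convention.
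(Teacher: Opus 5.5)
Your proof is correct and follows essentially the paper's route (sum the Proposition's formula over $r$, shift the index, apply one binomial symmetry, and collapse with Vandermonde, Result \ref{one}). The paper reflects the other factor, $\binom{n-1}{i+1}=\binom{n-1}{(n-2)-i}$, and reaches $\binom{n+k-3}{n-2}$ before a final symmetry; your separate treatment of $k=1$ also fills a small hole, since the paper's discarding of the $j=0$ term is valid only for $k\ge 2$. On the bookkeeping you flagged, note that with your orientation the literal limits of Result \ref{one} (here $0\le j\le k-2$) would omit the nonzero term $j=k-1$, so you should invoke Vandermonde summed over all $j$ (the form that is actually true), whereas the paper's orientation happens to fit the stated limits.
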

\begin{proof}
$G^{(1)}(n,k)=\sum_{r=1}^{k} \ J^{(1)}(n,r,k)=\sum_{r=1}^{k} \ \binom{k-2}{r-2}\binom{n-1}{r-1}.$\\[10pt]
Put $j=r-1$. If $r=1$, then $j=0$, if $r=k$, then $j=k-1$, so

$$G^{(1)}(n,k)=\sum_{j=0}^{k-1} \ \binom{k-2}{j-1}\binom{n-1}{j}
=\sum_{j=1}^{k-1} \ \binom{k-2}{j-1}\binom{n-1}{j}$$
Put $i=j-1$. If $j=1$, then $i=0$, if $j=k-1$, then $i=k-2$, so by Result \ref{one}

$$G^{(1)}(n,k)=\sum_{i=0}^{k-2} \ \binom{k-2}{i}\binom{n-1}{i+1}
=\sum_{i=0}^{k-2} \ \binom{k-2}{i}\binom{n-1}{(n-2)-i}=\binom{(k-2)+(n-1)}{n-2}$$
$$\ \ \ \ \ \ \ \ \ \ \ \ \ \ \ \ \ \ \ \ \ =\binom{n+k-3}{n-2}=\binom{n+k-3}{k-1}.$$
\end{proof}

\begin{table}[h!]
    \centering
     \caption{$J^{(1)}(n,r)$}
    \renewcommand{\arraystretch}{1.2}
    \begin{tabular}{|c|c|c|c|c|c|c|c|c|c|}
    \hline
    & \multicolumn{8}{c}{$ \ \ \ \  \ \ \ \ \ \ \ \ \ r$} & \\
    \hline
    $n$ & 1 & 2 & 3 & 4 & 5 & 6 & 7 & 8 & $\sum \ J^{(1)}(n,r)$\\
    \hline
         1 & 1 & & & & & & & & 1 \\
         \hline
         2 & 1 & 1 & & & & & & & 2 \\
         \hline
         3 & 1 & 4 & 1 & & & & & & 6 \\
         \hline
         4 & 1 & 9 & 9 & 1 & & & & & 20 \\
         \hline
         5 & 1 & 16 & 36 & 16 & 1 & & & & 70 \\
         \hline
         6 & 1 & 25 & 100 & 100 & 25 & 1 & & & 252 \\
         \hline
         7 & 1 & 36 & 225 & 400 & 225 & 36 & 1 & & 924 \\
         \hline
         8 & 1 & 49 & 441 & 1225 & 1225 & 441 & 49 & 1 & 3432 \\
         \hline
    \end{tabular}
    \label{tab:placeholder}\\
\end{table}

\vspace{0.7cm}
\begin{corollary}
   $|O_n^{(1)}|=\sum_{r=1}^{n} J^{(1)}(n,r)=\binom{2n-2}{n-1}.$ 
\end{corollary}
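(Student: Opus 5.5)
We want $|O_n^{(1)}|=\sum_{r=1}^n J^{(1)}(n,r)=\binom{2n-2}{n-1}$. The plan is to start from the preceding corollary, which gives $J^{(1)}(n,r)=\binom{n-1}{r-1}^2$. We then sum over $r$ and recognise a Vandermonde-type identity.

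First, the set $O_n^{(1)}$ is partitioned according to the rank $|Im(\alpha)|=r$, with $1\le r\le n$. Rank $n$ is allowed: the identity is the only element of that rank in $O_n^{(1)}$, and the formula gives $\binom{n-1}{n-1}^2=1$. Hence
$$|O_n^{(1)}|=\sum_{r=1}^{n}J^{(1)}(n,r)=\sum_{r=1}^{n}\binom{n-1}{r-1}\binom{n-1}{r-1}.$$

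Second, substitute $k=r-1$, so that $k$ runs from $0$ to $n-1$. Using the symmetry $\binom{n-1}{k}=\binom{n-1}{(n-1)-k}$ on one of the two factors, the sum becomes
$$\sum_{k=0}^{n-1}\binom{n-1}{(n-1)-k}\binom{n-1}{k}.$$

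Third, this is exactly Result \ref{one} with $n-1$ in place of $n$, $m=n-1$ and $p=n-1$. It yields $\binom{2n-2}{n-1}$, which is the claimed count.

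The only delicate point is bookkeeping at the boundary. We must check that the corollary $J^{(1)}(n,r)=\binom{n-1}{r-1}^2$ is valid for $r=1$ and for $r=n$, since the derivation of $J^{(1)}(n,r,k)$ treated $r=1$ separately. For $r=1$, $J^{(1)}(n,1,1)=1=\binom{n-1}{0}^2$. For $r=n$ the value was checked above. So the sum over $r$ is legitimate.

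As a sanity check, the table of $J^{(1)}(n,r)$ gives row sums $1,2,6,20,70,\dots$, and these are the central binomial coefficients $\binom{2n-2}{n-1}$, as expected.
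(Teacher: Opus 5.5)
Your proposal is correct and follows the paper's proof essentially step for step. Like the paper, you substitute $J^{(1)}(n,r)=\binom{n-1}{r-1}^2$, rewrite one factor as $\binom{n-1}{n-r}$ by symmetry, shift the index to $k=r-1$, and apply the Vandermonde identity (Result~\ref{one}) with $m=p=n-1$ to obtain $\binom{2n-2}{n-1}$; your explicit checks of the boundary ranks $r=1$ and $r=n$ are a small clarification the paper leaves implicit.
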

\begin{proof}
   $|O_n^{(1)}|=\sum_{r=1}^{n} J^{(1)}(n,r)=\sum_{r=1}^{n}\binom{n-1}{r-1}\binom{n-1}{r-1}$

\vspace{0.5cm}
   \hspace{4.2cm}
   $=\sum_{r=1}^{n}\binom{n-1}{r-1}\binom{n-1}{(n-1)-(r-1)}$
   
\vspace{0.5cm}
   \hspace{4.2cm}
  $=\sum_{r=1}^{n}\binom{n-1}{r-1}\binom{n-1}{n-r}.$

  \vspace{0.5cm}
Put $k=r-1$. If $r=1$, then $k=0$, if $r=n$, then $k=n-1$, so by Result \ref{one}
$$|O_n^{(1)}|=\sum_{k=0}^{n-1}\binom{n-1}{k}\binom{n-1}{(n-1)-k}=\binom{2n-2}{n-1}$$
\end{proof}

As a consequence of this result, we deduce that

 $$|Dom_{T_n}(C_n)|=\begin{pmatrix} 2n-2 \\ n-1 \end{pmatrix}.$$ 

Now we Define

\begin{equation}
 e^{(1)}(n,r,k)=|\{\alpha \in E(O_n^{(1)}) : |Im(\alpha)|=r \ \wedge \ max(Im(\alpha))=k\}|.   \label{ck3}
\end{equation}

\vspace{0.2cm}
We see that the only idempotent in $O_n^{(1)}$ with image size $1$ is the constant map to $1$, so 
$$e^{(1)}(n,1,1)=1=e^{(1)}(n,r,r)$$
corresponding to $\alpha,\beta \in O_n^{(1)}$ given by
$$x\alpha=1 \ (\forall x\in X_n), \ \ \ x\beta=x \ (1 \leq x \leq r), \ \ \ x\beta=r \ (x>r)$$
respectively. In general. we have

\vspace{0.2cm}
\begin{lemma}\label{e1}
 For natural numbers $2 \leq r \leq k < n$, we have 
 $$e^{(1)}(n,r,k)=e^{(1)}(k,r,k)$$
\end{lemma}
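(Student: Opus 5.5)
The plan is to exhibit an explicit bijection between the two sets being counted. Let
$$E(n,r,k)=\{\alpha \in E(O_n^{(1)}) : |Im(\alpha)|=r \ \wedge \ max(Im(\alpha))=k\},$$
so that $e^{(1)}(n,r,k)=|E(n,r,k)|$. I will define a restriction map $E(n,r,k)\to E(k,r,k)$ and an extension map in the opposite direction, and show that they are mutually inverse.

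\textbf{Step 1: behaviour of $\alpha\in E(n,r,k)$ above $k$.} Since $\alpha$ is idempotent, every element of $Im(\alpha)$ is a fixed point; in particular $k\alpha=k$. Take $x\geq k$. Because $\alpha\in O_n$, we get $x\alpha\geq k\alpha=k$. On the other hand $x\alpha\leq max(Im(\alpha))=k$. Hence $x\alpha=k$ for all $x\geq k$. It follows that $\alpha$ is completely determined by its values on $X_k$. Moreover $X_k\alpha\subseteq X_k$, and in fact $X_k\alpha=Im(\alpha)$, because every image point is a fixed point lying in $X_k$.

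\textbf{Step 2: the restriction map.} Define $\alpha\mapsto\bar\alpha=\alpha|_{X_k}$. By Step 1, $\bar\alpha\in T_k$. It is order-preserving and satisfies $1\bar\alpha=1$. It is idempotent because $\bar\alpha^2=(\alpha^2)|_{X_k}$ and $X_k\alpha\subseteq X_k$. Its image is $Im(\alpha)$, so it has size $r$ and maximum $k$. Therefore $\bar\alpha\in E(k,r,k)$.

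\textbf{Step 3: the extension map.} Given $\beta\in E(k,r,k)$, define $\hat\beta\in T_n$ by $x\hat\beta=x\beta$ for $x\leq k$ and $x\hat\beta=k$ for $x>k$. I will check the required properties in turn.
\begin{enumerate}[(a)]
\item $\hat\beta$ is order-preserving: for $x\leq k<y$ we have $x\hat\beta\leq k=y\hat\beta$, and the other cases are inherited from $\beta$ or are trivial.
\item $1\hat\beta=1$, since $1\leq k$ and $1\beta=1$.
\item $\hat\beta$ is idempotent, because $k$ is a fixed point of $\beta$ (it lies in $Im(\beta)$).
\item $Im(\hat\beta)=Im(\beta)$, because the new value $k$ already lies in $Im(\beta)$.
\end{enumerate}
Hence $\hat\beta\in E(n,r,k)$.

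\textbf{Step 4: conclusion.} Clearly $\overline{\hat\beta}=\beta$. By Step 1, $\widehat{\bar\alpha}=\alpha$. So the two maps are mutually inverse and the sets have equal cardinality, which proves the lemma.

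The only point needing care is the forcing argument in Step 1, namely that $x\alpha=k$ for all $x\geq k$. This uses idempotency, to get $k\alpha=k$, together with order preservation. The hypothesis $r\geq 2$ only guarantees that $k\geq 2$, so that the condition $1\alpha=1$ does not interfere with the argument; the remaining checks are routine.
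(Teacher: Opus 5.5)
Your proof is correct and follows the paper's own argument. Idempotency gives $k\alpha=k$, and order-preservation together with $\max(Im(\alpha))=k$ forces $x\alpha=k$ for all $x>k$, so $\alpha$ is determined by its restriction to $X_k$; the paper calls the resulting correspondence ``immediate,'' whereas you write out the restriction and extension maps and check that they are mutually inverse (your argument never actually uses $r\geq 2$, since it also covers $r=k=1$).
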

\begin{proof}
  Since $k=max(Im(\alpha))$, it follows that $k\alpha=k$, by idempotency. Moreover,for all $x$  in $\{k+1,k+2,...,n\}$, we have $x\alpha=k$, by order-preservedness. The result is now immediate.
\end{proof}

\vspace{0.5cm}
\begin{lemma} \label{FIG}
  $e^{(1)}(n,r,n)=\sum_{m=1}^{n-r+1} \ m \ e^{(1)}(n-m,r-1,n-m)$
\end{lemma}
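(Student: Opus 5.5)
The plan is to set up a bijection by cutting $\alpha$ at its second largest image point, instead of decomposing by the size of the block of $n$. Let $\alpha\in E(O_n^{(1)})$ with $|Im(\alpha)|=r\ge 2$ and $max(Im(\alpha))=n$. Since $\alpha$ is idempotent, every image point is a fixed point, so $n\alpha=n$ and $1\alpha=1$. Let $j$ be the largest element of $Im(\alpha)\setminus\{n\}$; it exists because $r\ge 2$, and $r-1\le j\le n-1$. Set $m=n-j$, so $1\le m\le n-r+1$, which is exactly the range of the sum. The claim to prove is that, for fixed $j$, the idempotents of this type correspond bijectively to pairs $(\gamma,c)$ with
\[
\gamma\in E(O_j^{(1)}),\quad |Im(\gamma)|=r-1,\quad max(Im(\gamma))=j,\quad c\in\{j,j+1,\dots,n-1\}.
\]
There are $n-j=m$ choices of $c$, so summing over $j$ gives the formula.

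First, the forward map. Since $\alpha$ is order-preserving and idempotent, its blocks are convex and each block contains its image point. So the block of $j$ is an interval containing $j$, and the block of $n$ is an interval containing $n$. The points $x\le j$ satisfy $x\alpha\le j\alpha=j$. Hence $\gamma=\alpha|_{X_j}$ is a map $X_j\to X_j$. It is order-preserving and idempotent, satisfies $1\gamma=1$ and $j\gamma=j$, and its image is $Im(\alpha)\setminus\{n\}$, which has $r-1$ elements and maximum $j$. By the choice of $j$, every $x$ with $j<x\le n$ has $x\alpha\in\{j,n\}$. By order-preservation there is a unique $c$ with $j\le c\le n-1$ such that $x\alpha=j$ for $j<x\le c$ and $x\alpha=n$ for $c<x\le n$; here $c\le n-1$ because $n\alpha=n$. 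Send $\alpha$ to the pair $(\gamma,c)$.

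Second, the inverse map. Given such a pair $(\gamma,c)$, define $\alpha$ by $x\alpha=x\gamma$ for $x\le j$, $x\alpha=j$ for $j<x\le c$, and $x\alpha=n$ for $x>c$. Then check the following.
\begin{enumerate}[(i)]
\item $\alpha$ is order-preserving. This holds because $x\gamma\le j$ for all $x\le j$, and the new values $j$ and $n$ appear in increasing order.
\item $\alpha$ is idempotent. This holds because $j$ and $n$ are fixed points and $\gamma$ is idempotent.
\item $1\alpha=1$, $|Im(\alpha)|=r$ and $max(Im(\alpha))=n$, with second largest image value $j$.
\end{enumerate}
The two constructions are clearly mutually inverse. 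This gives
\[
e^{(1)}(n,r,n)=\sum_{j=r-1}^{n-1}(n-j)\,e^{(1)}(j,r-1,j),
\]
and the substitution $m=n-j$ turns this into the stated formula.

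The main point needing care is the boundary cases. When $r=2$, $\gamma$ is forced to be the constant map to $1$ on $X_1$, and $e^{(1)}(1,1,1)=1$ is consistent with the convention stated before Lemma~\ref{e1}. I also need to confirm that the terms with $j<r-1$ are genuinely zero, so that the upper limit $m=n-r+1$ is correct. Finally, I will check that the block of $j$ may properly contain points below $j$, which is already encoded in $\gamma$, so that nothing is double counted. I do not expect Lemma~\ref{e1} to be needed, except possibly as an alternative route.
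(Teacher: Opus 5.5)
Your proof is correct. It takes a different route from the paper's, though the two are closely related.

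\textbf{The paper's route.} It conditions on the size $m$ of the top block $n\alpha^{-1}=\{n-m+1,\dots,n\}$. The restriction of $\alpha$ to $X_{n-m}$ is then an idempotent of $O_{n-m}^{(1)}$ of rank $r-1$, with maximum image $t$ ranging over $r-1\le t\le n-m$. This gives $\sum_{m}\sum_{t}e^{(1)}(n-m,r-1,t)$. The paper then uses Lemma~\ref{e1} to replace each term by $e^{(1)}(t,r-1,t)$, and interchanges the two sums without comment. The coefficient $m$ appears as the number of block sizes compatible with a given $t$.

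\textbf{Your route.} Your parametrization is the transpose of this: your $j$ is the paper's $t$, and your cut point $c$ is $n$ minus the paper's block size. Fixing $j$ first makes the factor $n-j$ appear directly as the number of choices of $c$. The content of Lemma~\ref{e1}, that every point of $\{j+1,\dots,c\}$ is forced onto $j$, is built into your bijection.

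\textbf{What each approach buys.} Yours is self-contained, needs no reindexing step, and treats $r=2$ uniformly. The paper's appeal to Lemma~\ref{e1} at rank $r-1$ formally requires $r\ge 3$; the case $r=2$ holds only because rank-one idempotents fixing $1$ are constant. The paper's route, in turn, reuses a lemma it needs anyway for the next proposition.

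\textbf{Your listed boundary checks.} These are already settled by your own argument. First, $j\ge r-1$ because $Im(\alpha)\setminus\{n\}$ consists of $r-1$ points of $\{1,\dots,j\}$. Second, for $r=2$ the condition $|Im(\gamma)|=1$ forces $j=1$, which gives $n-1$. This agrees with the formula, since $e^{(1)}(k,1,k)=0$ for $k\ge 2$.
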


\begin{proof}
  If $min (n\alpha^{-1})=\{n-m+1\}\ (1\leq m \leq n-r+1),$ then there are clearly $\sum_{t=r-1}^{n-m} \ e^{(1)}(n-m,r-1,t)$ idempotents. Thus taking the sum over $m$ from $1$ to $n-r+1$ yields
  
  \vspace{0.5cm}
  \hspace{3cm}
  $e^{(1)}(n,r,n)=\sum_{m=1}^{n-r+1}\sum_{t=r-1}^{n-m} \ e^{(1)}(n-m,r-1,t)$
  
  \vspace{0.5cm}
  \hspace{4.8cm}
  $=\sum_{m=1}^{n-r+1}\sum_{t=r-1}^{n-m} \ e^{(1)}(t,r-1,t)\ \ \ \text{(by Lemma \ref{e1}})$
  
   \vspace{0.5cm}
  \hspace{4.8cm}
  $=\sum_{m=1}^{n-r+1} \ m \ e^{(1)}(n-m,r-1,n-m).$
\end{proof}

\vspace{0.5cm}
\begin{proposition}
  Let $e^{(1)}(n,r,k)$ be as defined in \eqref{ck3}. Then for \ $2 \leq r \leq k < n$ 
    $$e^{(1)}(n,r,k)=\binom{k+r-3}{2r-3}.$$
\end{proposition}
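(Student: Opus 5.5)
By Lemma \ref{e1}, $e^{(1)}(n,r,k)=e^{(1)}(k,r,k)$ whenever $2\le r\le k<n$. So it suffices to prove
$$e^{(1)}(k,r,k)=\binom{k+r-3}{2r-3}\qquad (2\le r\le k),$$
which no longer involves $n$. I will prove this by induction on $r$, using the recursion of Lemma \ref{FIG} (with $n$ replaced by $k$) and the summation identity of Result \ref{three}. The diagonal case $k=r$ is consistent with the formula, since $e^{(1)}(r,r,r)=1=\binom{2r-3}{2r-3}$, as observed before Lemma \ref{e1}. Hence the claimed formula also covers the boundary terms that appear in the recursion.

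\textbf{Base case $r=2$.} An idempotent $\alpha\in O_k^{(1)}$ with $Im(\alpha)=\{1,k\}$ is determined by its two convex classes $\{1,\dots,j\}$ and $\{j+1,\dots,k\}$. Idempotency forces $1$ into the first class and $k$ into the second, and any $j$ with $1\le j\le k-1$ works. This gives $e^{(1)}(k,2,k)=k-1=\binom{k-1}{1}$, as required. (I start at $r=2$ rather than $r=1$, because the formula is meaningless for $r=1$.)

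\textbf{Inductive step.} Let $r\ge 3$ and assume the formula for $r-1$ and all admissible $k$. By Lemma \ref{FIG},
$$e^{(1)}(k,r,k)=\sum_{m=1}^{k-r+1} m\, e^{(1)}(k-m,r-1,k-m)=\sum_{m=1}^{k-r+1}\binom{m}{1}\binom{k-m+r-4}{2r-5}.$$
The induction hypothesis applies to every term, since $k-m\ge r-1\ge 2$. Now apply Result \ref{three} with $r'=1$, $k'=2r-5$ and $n'=k-r+1$. Then $n'+k'-m=k+r-4-m$, and the summation range $m=1,\dots,n'$ matches exactly. Result \ref{three} gives $\binom{n'+k'+1}{r'+k'+1}=\binom{k+r-3}{2r-3}$, which completes the induction.

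\textbf{Main obstacle.} The only delicate point is the bookkeeping when matching the sum to Result \ref{three}: the parameters $n',k',r'$ and the range of $m$ must line up exactly, and the boundary term $k-m=r-1$ must be covered by the convention $e^{(1)}(r-1,r-1,r-1)=1$. I would also re-check that Lemma \ref{FIG} is valid when its ``$n$'' equals $k$; it is, since that lemma holds for arbitrary $n$.
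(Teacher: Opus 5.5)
Your proof is correct and follows the paper's argument: you reduce to the diagonal case $e^{(1)}(k,r,k)$ via Lemma~\ref{e1}, take the base case $r=2$ as $k-1$, and carry out the inductive step using the recursion of Lemma~\ref{FIG} and Result~\ref{three}, with the same parameter matching the paper uses. The only difference is presentational: you set the induction up explicitly on $r$ for all $k$ at once, which is cleaner than the paper's phrasing as a step from $n$ to $n+1$ but involves the identical computation.
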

\begin{proof}
    First, we show by induction that
    $$e^{(1)}(n,r,n)=\binom{n+r-3}{2r-3}.$$
    If $r=2$, then $Im(\alpha)=\{1,n\}$. Since $1\alpha=1$, every such idempotent has the form
    $$x\alpha=1 \ \ (1 \leq x \leq t),  \ \ \ \ \ \ \ \ \ x\alpha=n \ \ (t+1 \leq x \leq n),$$
    where $1 \leq t \leq n-1$.
    Thus
    $$e^{(1)}(n,2,n)=n-1=\binom{n-1}{1}=\binom{n+2-3}{2(2)-3}.$$\\[10pt]
   So suppose the result is true for all $2 \leq r \leq n$. Now by Result \ref{three} we have
    
    \vspace{0.5cm}
    $e^{(1)}(n+1,r,n+1)=\sum_{m=1}^{n-r+2} \ m \ e^{(1)}(n+1-m,r-1,n+1-m)$
    
  \vspace{0.5cm}
  \hspace{3.2cm}
  $=\sum_{m=1}^{n-r+2} \ m \binom{(n+1-m)+(r-1)-3}{2(r-1)-3}$
  
   \vspace{0.5cm}
  \hspace{3.2cm}
  $=\sum_{m=1}^{n-r+2} \binom{m}{1}\binom{(n-r+2)+(2r-5)-m}{2r-5}$
  
  \vspace{0.5cm}
  \hspace{3.2cm}
  $=\binom{(n-r+2)+(2r-5)+1}{1+(2r-5)+1}$
  
    \vspace{0.5cm}
  \hspace{3.2cm}
  $=\binom{(n+1)+r-3}{2r-3}.$

  \vspace{0.5cm}
  Hence the result for $e^{(1)}(n,r,k)$ is true, by Lemma \ref{e1}.\\
\end{proof}

\vspace{0.5cm}
\begin{corollary}
  Let $e^{(1)}(n,r)=\sum_{k=r}^{n} \ e^{(1)}(n,r,k).$ Then $e^{(1)}(n,r)=\binom{n+r-2}{2r-2}.$
\end{corollary}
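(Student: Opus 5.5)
The plan is to sum the closed form from the preceding Proposition over $k$, after treating separately the boundary terms that it does not cover. For $r\ge 2$ we split
$$e^{(1)}(n,r)=\sum_{k=r}^{n-1} e^{(1)}(n,r,k)+e^{(1)}(n,r,n).$$
For $2\le r\le k<n$ the preceding Proposition gives $e^{(1)}(n,r,k)=\binom{k+r-3}{2r-3}$. The first step of its proof (the induction) established $e^{(1)}(n,r,n)=\binom{n+r-3}{2r-3}$. So the same formula holds for every $k$ with $r\le k\le n$. For $k=r$ it is consistent with $e^{(1)}(n,r,r)=1$, since $\binom{2r-3}{2r-3}=1$. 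Hence, for $r\ge 2$,
$$e^{(1)}(n,r)=\sum_{k=r}^{n}\binom{k+r-3}{2r-3}.$$

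Next we evaluate this sum with Result \ref{five}. We substitute $j=k+r-2$, so that $j$ runs from $2r-2$ to $n+r-2$ and the summand becomes $\binom{j-1}{(2r-2)-1}$. Result \ref{five}, applied with upper limit $n+r-2$ and lower index $2r-2$, then gives
$$\sum_{j=2r-2}^{n+r-2}\binom{j-1}{2r-3}=\binom{n+r-2}{2r-2},$$
which is the claimed value.

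Finally we check the remaining case $r=1$ by hand. There $e^{(1)}(n,1)=e^{(1)}(n,1,1)=1$, because the only idempotent of rank one in $O_n^{(1)}$ is the constant map to $1$. The formula also gives $\binom{n-1}{0}=1$, so it holds for $r=1$ as well.

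The only delicate point is bookkeeping: we must make sure the $k=n$ term is really supplied by the induction step of the previous proof and not by its stated range $k<n$, and we must get the index shift in Result \ref{five} right so that the lower index is $2r-2$ and not $2r-3$. Everything else is routine substitution.
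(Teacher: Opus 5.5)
Your proof is correct and follows the paper's argument: sum the closed form $\binom{k+r-3}{2r-3}$ over $r\le k\le n$, substitute $j=k+r-2$, and apply Result \ref{five} to obtain $\binom{n+r-2}{2r-2}$. You also do two pieces of boundary bookkeeping that the paper's proof passes over silently: you take the $k=n$ term from the induction in the preceding proof rather than from its stated range $k<n$, and you check $r=1$ separately, where the closed form would read $\binom{k-2}{-1}$.
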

\begin{proof}
$e^{(1)}(n,r)=\sum_{k=r}^{n} \ e^{(1)}(n,r,k)=\sum_{k=r}^{n} \ \binom{k+r-3}{2r-3}.$\\[10pt]
Put $j=k+r-2$, if $k=r$, then $j=2r-2$. If $k=n$, then $j=n+r-2$, so by Result \ref{five}

$$e^{(1)}(n,r)=\sum_{j=2r-2}^{n+r-2} \ \binom{j-1}{2r-3}=\binom{n+r-2}{2r-2}.$$
\end{proof}

\vspace{0.5cm}
Define the Fibonacci number $F_m, \ m \geq 1$, as follws
$$F_1=F_2=1 \ \ \ \text{and} \ \ \ F_m=F_{m-1}+F_{m-2} \ \ \ (m \geq 3).$$

\vspace{0.5cm}
Then we have the next result

\begin{corollary}
 Let $g^{(1)}(n,k)=\sum_{r=2}^{k} \ e^{(1)}(n,r,k).$ Then for $k \geq 2$, $g^{(1)}(n,k)=F_{2k-2}.$
 For $k=1$, $g^{(1)}(n,1)=1.$ 
\end{corollary}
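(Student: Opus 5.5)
The plan is to reduce the statement to a closed binomial sum and then identify that sum with a Fibonacci number through the classical ``shallow diagonal'' identity of Pascal's triangle.

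First, the case $k=1$. If $\max(Im(\alpha))=1$ then $Im(\alpha)=\{1\}$, so $\alpha$ is the constant map to $1$. This gives $g^{(1)}(n,1)=e^{(1)}(n,1,1)=1$, as already observed before Lemma \ref{e1}.

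Now let $k\geq 2$. I will use the formula $e^{(1)}(n,r,k)=\binom{k+r-3}{2r-3}$ for every $2\le r\le k\le n$.
\begin{itemize}
\item For $k<n$ this is exactly the preceding Proposition.
\item For $k=n$ it is the statement $e^{(1)}(n,r,n)=\binom{n+r-3}{2r-3}$, which was proved by induction in the first part of that Proposition's proof.
\item The term $r=k$ gives $\binom{2k-3}{2k-3}=1$, which agrees with $e^{(1)}(n,r,r)=1$.
\end{itemize}
Hence
$$g^{(1)}(n,k)=\sum_{r=2}^{k}\binom{k+r-3}{2r-3}.$$
Put $m=k-1$ and $j=r-2$, so that $j$ runs over $0\le j\le m-1$ and the summand is $\binom{m+j}{2j+1}=\binom{m+j}{m-j-1}$. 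Then substitute $i=m-1-j$, so that $i$ also runs over $0\le i\le m-1$. This turns the sum into
$$g^{(1)}(n,k)=\sum_{i=0}^{m-1}\binom{2m-1-i}{i}.$$
Since $\binom{2m-1-i}{i}=0$ for $i\ge m$, this is the full diagonal sum $\sum_{i\ge 0}\binom{N-i}{i}$ with $N=2m-1$.

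The key step is the identity $\sum_{i\ge 0}\binom{N-i}{i}=F_{N+1}$. I would prove it by induction on $N$ with $S_N:=\sum_{i\ge0}\binom{N-i}{i}$.
\begin{itemize}
\item Base cases: $S_0=1=F_1$ and $S_1=1=F_2$.
\item Inductive step: apply Pascal's rule $\binom{N-i}{i}=\binom{N-1-i}{i}+\binom{N-1-i}{i-1}$. The first pieces sum to $S_{N-1}$. Shifting $i\mapsto i+1$ in the second pieces gives $\sum_i\binom{N-2-i}{i}=S_{N-2}$. Hence $S_N=S_{N-1}+S_{N-2}$.
\end{itemize}
With $N=2m-1=2k-3$ this yields $g^{(1)}(n,k)=F_{2k-2}$.

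The main obstacle is only bookkeeping: checking that the summation ranges match exactly after the two reindexings, and that the boundary terms where binomials vanish cause no off-by-one error. As a sanity check, $k=2$ gives $\binom{1}{1}=1=F_2$, and $k=3$ gives $\binom{2}{1}+\binom{3}{3}=3=F_4$.
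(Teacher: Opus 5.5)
Your argument is correct and follows the paper's route. You reduce $g^{(1)}(n,k)$, via Lemma~\ref{e1} and the preceding Proposition (including the $k=n$ case from its induction), to $\sum_{r=2}^{k}\binom{k+r-3}{2r-3}$, identify this with $F_{2k-2}$, and take $g^{(1)}(n,1)=e^{(1)}(n,1,1)=1$ in place of the literally empty sum $\sum_{r=2}^{1}$. The paper simply asserts the final Fibonacci equality, whereas you prove it through the identity $\sum_{i\ge 0}\binom{N-i}{i}=F_{N+1}$ (using the convention $\binom{a}{b}=0$ unless $0\le b\le a$), which correctly fills that step.
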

\begin{proof}
   $g^{(1)}(n,k)=\sum_{r=2}^{k} \ e^{(1)}(n,r,k)= \sum_{r=2}^{k} \ e^{(1)}(k,r,k)$
   
    \vspace{0.5cm}
  \hspace{5.2cm}
  $=\sum_{r=2}^{k} \binom{k+r-3}{2r-3}$

  \vspace{0.5cm}
  \hspace{5.2cm}
 $=F_{2k-2}.$ 
\end{proof}

\vspace{1cm}
\begin{table}[h!]
    \centering
    \caption{$g^{(1)}(n,k)$}
    \renewcommand{\arraystretch}{1.2}
    \begin{tabular}{|c|c|c|c|c|c|c|c|c|c|}
    \hline
    & \multicolumn{8}{c}{$ \ \ \ \  \ \ \ \ \ \ \ \ \ k$} & \\
    \hline
    $n$ & 1 & 2 & 3 & 4 & 5 & 6 & 7 & 8 & $\sum \ g^{(1)}(n,k)$\\
    \hline
         1 & 1 & & & & & & & & 1 \\
         \hline
         2 & 1 & 1 & & & & & & & 2 \\
         \hline
         3 & 1 & 1 & 3 & & & & & & 5 \\
         \hline
         4 & 1 & 1 & 3 & 8 & & & & & 13 \\
         \hline
         5 & 1 & 1 & 3 & 8 & 21 & & & & 34 \\
         \hline
         6 & 1 & 1 & 3 & 8 & 21 & 55 & & & 89 \\
         \hline
         7 & 1 & 1 & 3 & 8 & 21 & 55 & 144 & & 233 \\
         \hline
         8 & 1 & 1 & 3 & 8 & 21 & 55 & 144 & 377 & 610 \\
         \hline
    \end{tabular}
    \label{tab:placeholder}
\end{table}

\vspace{1cm}
\begin{corollary}
 Let \ $|E(O_n^{(1)})|=1+\sum_{k=2}^{n} \ g^{(1)}(n,k)=F_{2n-1}.$  
\end{corollary}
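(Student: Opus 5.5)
The plan is to reduce the count to the previous corollary. We split $E(O_n^{(1)})$ by the maximum $k$ of the image. The identity-type idempotents with image $\{1\}$ (the constant map to $1$) give the single term $k=1$, which is the summand $1$. For $k\ge 2$, every idempotent with $\max(Im(\alpha))=k$ has $|Im(\alpha)|=r$ with $2\le r\le k$, since $1\in Im(\alpha)$ and $k\in Im(\alpha)$ are distinct. Summing over $r$ therefore gives exactly $g^{(1)}(n,k)$. This justifies
$$|E(O_n^{(1)})|=1+\sum_{k=2}^{n} g^{(1)}(n,k)=1+\sum_{k=2}^{n}F_{2k-2},$$
using the preceding corollary, $g^{(1)}(n,k)=F_{2k-2}$. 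One point deserves a remark here. That corollary relies on Lemma \ref{e1} and the formula $\binom{k+r-3}{2r-3}$, stated for $k<n$, whereas the case $k=n$ is the case $e^{(1)}(n,r,n)=\binom{n+r-3}{2r-3}$ established inside the proof by induction. So the formula holds for all $2\le r\le k\le n$, and the corollary applies up to $k=n$.

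It then remains to evaluate the Fibonacci sum. We use the standard identity for sums of even-indexed Fibonacci numbers, $\sum_{j=1}^{m}F_{2j}=F_{2m+1}-1$. It follows by induction on $m$. The base case is $F_2=1=F_3-1$. The inductive step is $F_{2m+1}-1+F_{2m+2}=F_{2m+3}-1$, which is just the Fibonacci recurrence. Substituting $j=k-1$ with $m=n-1$ gives $\sum_{k=2}^{n}F_{2k-2}=F_{2n-1}-1$, so the total is $F_{2n-1}$.

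We then check small cases against Table 2 as a sanity test. For $n=1$ the sum is empty and the result is $1=F_1$. For $n=3$ we get $1+1+3=5=F_5$.

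The main obstacle is not the Fibonacci arithmetic but making sure the decomposition is a genuine partition with no omissions. That requires checking that every idempotent has $1$ in its image (true since $1\alpha=1$), and that the only idempotent with $k=1$ is the constant map. Both follow immediately from the definition of $O_n^{(1)}$.
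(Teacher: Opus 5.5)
Your proposal is correct and follows essentially the same route as the paper: both reduce to the preceding corollary $g^{(1)}(n,k)=F_{2k-2}$ and then sum. Your induction for $\sum_{j=1}^{m}F_{2j}=F_{2m+1}-1$ is the same computation as the paper's telescoping via $F_{2k-2}=F_{2k-1}-F_{2k-3}$. Your explicit justification of the partition by $\max(Im(\alpha))$ and your remark that the formula also covers $k=n$ fill in points the paper leaves implicit.
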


\begin{proof}
Since for $n \geq 2$, $F_{n+1}=F_{n}+F_{n-1}$, then $F_{n}=F_{n+1}-F_{n-1}$, so 

\vspace{0.9cm}
$|E(O_n^{(1)})|=1+\sum_{k=2}^{n} \ g^{(1)}(n,k)=1+\sum_{k=2}^{n} \ F_{2k-2}$

 \vspace{0.7cm}
 \hspace{4.9cm}
 $=1+\sum_{k=2}^{n} \ (F_{2k-1}-F_{2k-3})$

\vspace{0.7cm}
 \hspace{4.9cm}
 $=1+(F_{3}-F_{1})+(F_{5}-F_{3})+(F_{7}-F_{5})+\dots+(F_{2n-1}-F_{2n-3})$
 
\vspace{0.7cm}
  \hspace{4.9cm}
 $=1+F_{2n-1}-F_{1}$

\vspace{0.7cm}
 \hspace{4.9cm}
 $=1+F_{2n-1}-1$.

\vspace{0.7cm}
\hspace{4.9cm}
 $=F_{2n-1}$.\\
\end{proof}

\vspace{0.5cm}
As a consequence of this result, we deduce that 

 $$|E(Dom_{T_n}(C_n))|=F_{2n-1}.$$


\begin{thebibliography}{99}

\bibitem{7} S.Nasir. A. Umar. \textit{On Dominion of Certain Ample Monoids}. 27, (2023).

\bibitem{cla} O. Ganyushkin and V. Mazorchuk.
\textit{Classical Finite Transformation Semigroups}. An Introduction. Spring. London, (2009).

\bibitem{nrk} A. Laradji A. Umar. \textit{Combinatorial results for semigroups of order-preserving full transformations}. Semigroup Forum, 72, 51-62 (2006).

\bibitem{50} D. S. Dummit. R. M. Foote. \textit{Abstract Algebra}. 3rd ed. Hoboken, NJ: Wiley, (2004).

\bibitem{8}  A.Laradji, A.Umar. \textit{On certain finite semigroups of order-decreasing transformations I}. Semigroup Forum. 69(2), 184-200, (2004).  

\bibitem{lupt}  A.Laradji, A.Umar. \textit{Combinatorial results for semigroups of order-preserving partial transformations}. J. Algebra, 278(1), 342-359 (2004).
       
 \bibitem{6} J. M. Howie. \textit{Fundamentals of semigroup theory}. London Math. Soc. Monographs New-series, (1995).

 \bibitem{HI} P. M. Higgins. \textit{Idempotent depth in semigroups of order-preserving mappings }. Proc. Roy. Soc. Edinburgh. A 124, 1045-1058, (1994).

 \bibitem{HIG} P. M. Higgins. \textit{Combinatorial results for semigroups of order-preserving mappings}. Math. Proc. Camb. Phil. Soc, 113, 281-296 (1993).
 
 \bibitem{15} A.Umar. \textit{On the semigroup of order-decreasing full transformation}. Proc. Roy. Soc. Edinburgh, 129-142, (1992).

  \bibitem{14} P.M.Higgins. \textit{Techniques of semigroup theory}. (Oxford University Press, 1992).

  \bibitem{GM} M.S.Gomes.,J.M.Howie. \textit{On the ranks of certain semigroups of order-preserving transformations}. Semigroup Forum, Vol. 45, 272-282, (1992).


\bibitem{12} P.M.Higgins. \textit{Random products in semigroups of mappings}. In Lattices, Semigroups and Universal Algebra (editors J.Almeida et al), pp. 89-100, (Plenum Press 1990).

\bibitem{16} J. M. Howie., E. F. Robertson, and B. M. Shein. \textit{A combinatorial property of finite full transformation semigroups}. Proc. Roy. Soc. Edinburgh Sect. A 109, 319-328, (1988).


\bibitem{40} J. B. Fountain.  \textit{Abundant semigroups}. Proc. London Math. Soc(3). 44, 103-12, (1982).

\bibitem{196} T. E. Hall, \textit{Epimorphisms and dominions}, Semigroup Forum, 271-283, 24, (1982).

\bibitem{76} J. M. Howie. \textit{An introduction to semigroup theory}. (London: Academic Press, 1976).

\bibitem{30} H. E. Scheiblich. Kayran C. moore. \textit{$T_X$ is absolutely closed}. Semigroup Forum. 6, 216-226, (1973).

\bibitem{pod} J. M. Howie, \textit{Products of idempotents in certain semigroups of transformations}, Proc. Edinburgh Math. Soc. 17, 223-236 (1971).

 \bibitem{ria} J.Riordan. \textit{Combinatorial Identities}. John Wiely and Sons. New Yourk, (1968).

 \bibitem{10} B.Harris. \textit{A note on the number of idempotents in symmetric semigroups}. J.Combin. Theory Ser. A 3, 1234-1235, (1967).

 \bibitem{11} B.Harris. L.Schoenfeld. \textit{The number of idempotent elements in symmetric semigroups}. J.Combin. Theory Ser. A 3, 122-135, (1967).

  \bibitem{13} J. M. Howie and Isbell. \textit{Epimorphisms and dominions II}. Journal of Algebra, 6, 7-21, (1967).

  \bibitem{RI} J. R. Isbell, \textit{Epimorphisms and dominions}, Proceedings of the conference on Categorical Algebra, La Jolla, (1965), 232-246, Lange and Springer, Berlin (1966).

 \bibitem{2} J. M. Howie. \textit{The subsemigroup generated by the idempotents of a full transformation semigroup}. J. London Math. Soc. 41, 707-716, (1966).

\bibitem{AZ} Aizenstat, A.Y. \textit{The defining relations of the endomorphism semigroup of a finite linearly ordered set}, (Russian). Sibirsk Mat.Zh. 3, 161-169, (1962).

\bibitem{1} A. H. Clifford and G. B. Preston. \textit{The algebraic theory of semigrouos}. Vol,1, Math. Surveys of the Amercan Math. Soc., 7, (1961).   

 \bibitem{seq} The On-Line Encyclopedia of Integer Sequences (OEIS) http://oeis.org.     

\end{thebibliography}
\end{document}